\def\nn{\nonumber}
\def\del#1{\relax}
\def\prbox{\par
    \vskip-\lastskip\vskip-\baselineskip\hbox to \hsize{\hfill\fboxsep0pt\fbox{\phantom{\vrule width5pt height5pt depth0pt}}}}
\def\eqnarray{\stepcounter{equation}\let\@currentlabel\theequation
\global\@eqnswtrue\m@th \global\@eqcnt\z@\tabskip\@centering\let\\\@eqncr
$$\halign to\displaywidth\bgroup\@eqnsel\hskip\@centering
  $\displaystyle\tabskip\z@{##}$&\global\@eqcnt\@ne
  \hskip 0.8\arraycolsep \hfil${##}$\hfil
  &\global\@eqcnt\tw@ \hskip 0.8\arraycolsep $\displaystyle\tabskip\z@{##}$\hfil
   \tabskip\@centering&\llap{##}\tabskip\z@\cr}
\def\endeqnarray{\@@eqncr\egroup
      \global\advance\c@equation\m@ne$$\global\@ignoretrue}
\def\anauthor#1{%
   \expandafter\csname author\romannumeral#1\endcsname
}
\def\anaff#1{{\par 
\@ifundefined{affil\romannumeral#1}{}{%
    \expandafter\csname affil\romannumeral#1\endcsname, }%
\@ifundefined{@auaddline\romannumeral#1}{}{%
   \expandafter\csname @auaddline\romannumeral#1\endcsname, }%
\@ifundefined{@aucity\romannumeral#1}{}{%
    \expandafter\csname @aucity\romannumeral#1\endcsname, }%
\@ifundefined{@auzip\romannumeral#1}{}{%
    \expandafter\csname @auzip\romannumeral#1\endcsname, }%
\@ifundefined{@aucountry\romannumeral#1}{}{%
    \expandafter\csname @aucountry\romannumeral#1\endcsname, }%
\@ifundefined{@auemail\romannumeral#1}{}{%
    \expandafter\csname @auemail\romannumeral#1\endcsname}%
\par
}}
\def\makeintropage{%
\preface{Beginning with the paper \textit{A Topological Approach to Evasiveness}
by Kahn, Saks, and Sturtevant~\cite{kss1984}, there have been a number of interesting
research papers that use topological methods to prove lower bounds on the complexity of
graph properties. This is a fascinating topic that lies at the interface between
mathematics and theoretical computer science. The goal of this text is to offer an
integrated version of the underlying proofs in this body of research. While there are a
number of very good expositions available on topological methods in decision-tree
complexity, all those that I have seen refer to other sources for the proofs of some
topological results (including the key fixed-point theorem of R. Oliver
\cite{oliver1975}). In this text I have attempted to give a completely self-contained
account.\\
\hspace*{14pt}I have not assumed that the reader has any prior background in algebraic
topology---all constructions from that subject are developed from scratch.  The only
prerequisite is a high level of comfort with discrete mathematics and linear
algebra. Indeed, though I will sometimes refer to subsets of $\mathbb{R}^n$ for
intuition, all the results in this text finally rest on manipulations of finite sets.\\
\hspace*{14pt}While I was preparing this work for publication, I learned about the new
book \textit{A Course in Topological Combinatorics} by Mark de Longueville
\cite{longueville2013}.  This book gives a similar treatment of topological methods for
proofs of complexity of graph properties, including a proof of Oliver's theorem.  Whereas
my text is more economical and is intended to offer as direct a route as possible to
\cite{kss1984} and its related results, de Longueville's book is broader in scope and
encompasses topological methods for other combinatorial problems.  I hope that the
community will find both works beneficial.\\
\hspace*{14pt}The general flow of the text is to begin with foundational material and
then to build up more complex results at a steady pace.  The capstone results, which
consist of three lower bounds on the complexity of graph properties, appear in the final
part of the text. My undergraduate advisor Richard Hain once said that the final goal of
mathematics is ``to tell a good story.'' That is what I have attempted to do here, and I
hope the reader will enjoy the result.}
 \newpage
\def\textregistered{\textcircled{{\fontsize{4}{4}\selectfont R}}} 
\noindent\hbox to \textwidth{%
    \vbox{\hsize=.6\textwidth \footnotesize
\@journaltitleprefix\\\@journaltitle\\
Vol.~\@volume, No.~\@issue\ (\@pubyear) \@firstpage--\@lastpage\\
\copyright\ 2013\ \@copyrightowner\\ DOI: \@DOI}\hss \vbox{\hsize=.4\textwidth
    \raggedleft
    \includegraphics[width=53bp]{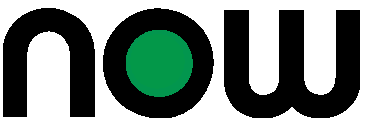}\\
    \includegraphics[width=81bp]{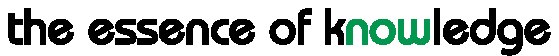}%
    \par
}}\par \vspace{24pt}
\def\thefootnote{*}
\begin{center}
{\fontsize{16}{20}\selectfont\sffamily\bfseries{{Evasiveness of Graph Properties and\\[5pt]
Topological Fixed-Point Theorems}}}\par
\end{center}
\vspace{20pt} \@theauthors\par \vspace{20pt} {\raggedright\@theinstitutes\par}
\vspace{12pt}
\@abstract
\newpage
}
\def\makefulltitlepage{%
\title{{\@articletitle}}
  \vbox to \textheight{\vskip-17\p@\vss\raggedleft%
    \noindent\vrule\@height4\p@\@width270\p@\par\vskip-5\p@%
    \noindent{\fontsize{22}{34}\selectfont\sffamily\bfseries\leftskip60pt plus1fill\rightskip\z@ \@title\par}
    \vskip9\p@%
    \noindent\vrule\@height4\p@\@width270\p@\par%
  \vss}%
  \newpage\ \newpage
}
\def\firstpage#1{\def\@firstpage{#1}}
\def\lastpage#1{\def\@lastpage{#1}}
\def\maketitlepage{%
  \vbox to\textheight{%
   \noindent\flushright{\fontsize{26}{34}\selectfont\sffamily\bfseries\leftskip0pt plus1fil\rightskip\z@ \@title \par}
    \vskip21\p@
    \noindent\vrule\@height2\p@\@width\textwidth\vskip30\p@
\sffamily\normalsize\raggedleft
{\fontsize{16}{18}\selectfont\bfseries {Carl A. Miller}}\\[18pt]
{ \fontsize{14}{16}\selectfont \itshape{University of Michigan}\\[3pt]
USA\\[6pt]
carlmi@umich.edu}\\[2pc]
    \vfill
    \flushright{\includegraphics[width=36.5mm]{now_logo}}%
    \flushright{\includegraphics[width=57mm]{essence_logo}}
    \flushright{\sffamily \fontsize{14}{16}\selectfont Boston -- Delft}%
}
}
\renewcommand\tableofcontents{%
    \if@twocolumn
      \@restonecoltrue\onecolumn
    \else
      \@restonecolfalse
    \fi
    \chapter*{\contentsname}
    \@mkboth{Contents}{Contents}
    \thispagestyle{plain}
    \vspace*{-10.6em}
    \@starttoc{toc}%
    \if@restonecol\twocolumn\fi
    }
\begin{document}

\pagenumbering{roman}

\isbn{978-1-60198-664-1}

\DOI{10.1561/0400000055}

\abstract{Many graph properties (e.g., connectedness, containing a complete
\hbox{subgraph}) are known to be difficult to check.  In a decision-tree model, the cost
of an algorithm is measured by the number of edges in the graph that it queries.  R. Karp
conjectured in the early 1970s that all monotone graph properties are evasive---that is,
any algorithm which computes a monotone graph property must check all edges in the worst
case. This conjecture is unproven, but a lot of progress has been made. Starting with the
work of Kahn, Saks, and Sturtevant in 1984, topological methods have been applied to
prove partial results on the Karp conjecture.  This text is a tutorial on these
topological \hbox{methods}. I give a fully self-contained account of the central proofs
from the paper of Kahn, Saks, and Sturtevant, with no prior knowledge of topology
assumed.  I also briefly survey some of the more recent results on \hbox{evasiveness}.}

\articletitle{Evasiveness of Graph Properties and Topological Fixed-Point Theorems}

\authorname1{Carl A. Miller}
\affiliation1{University of Michigan}
\author1address2ndline{Department of Electrical Engineering and Computer Science, 2260 Hayward St.}
\author1city{Ann Arbor}
\author1zip{MI 48109-2121}
\author1country{USA}
\author1email{carlmi@umich.edu}

\journal{tcs}
 \volume{7}
 \issue{4}
 \copyrightowner{C. A. Miller}
 \pubyear{2011}

\maketitle

\setcounter{page}{1}

\newtheorem{notation}[theorem]{Notation}
\newtheorem{conj}[theorem]{Conjecture}

\def\mb{\mathbb}
\def\mf{\mathbf}

\def\im{\textnormal{im }}
\def\ker{\textnormal{ker }}
\def\sign{\textnormal{sign }}
\def\Tr{\textnormal{Tr}}
\def\mod{\textnormal{mod }}
\def\bar{\textnormal{bar}}
\def\weight{\textnormal{weight}}
\def\Perm{\textnormal{Perm}}
\def\Pow{\textnormal{Pow}}
\def\Sym{\textnormal{Sym}}
\def\coker{\textnormal{coker }}

\chapter{Introduction}\label{chap1}

Let $V$ be a finite set of size $n$, and let $\mathbf{G} ( V )$ denote the set of
undirected graphs on $V$. For our purposes, a \textbf{graph property} is simply a
function
\begin{eqnarray}
f \colon \mathbf{G} ( V ) \to \{ 0, 1 \}
\end{eqnarray}
which is such that whenever two graphs $Z$ and $Z'$ are isomorphic, $f ( Z ) = f ( Z' )$.
A graph $Z$ ``has property $f$'' if $f ( Z ) = 1$.

We can measure the cost of an algorithm for computing $f$ by counting the number of
edge-queries that it makes.  We assume that these edge-queries are adaptive (i.e., the
choice of query may depend on the outcomes of previous queries). An algorithm for $f$ can
thus be represented by a binary decision-tree (see Figure~\ref{dectreefigure}). The
\textbf{decision-tree complexity of $f$}, which we denote by $D ( f )$, is the least
possible depth for a decision-tree that computes $f$.  In other words, $D ( f )$ is the
number of edge-queries that an optimal algorithm for $f$ has to make in the worst case.

\begin{figure}[!t]
 \centerline{\includegraphics{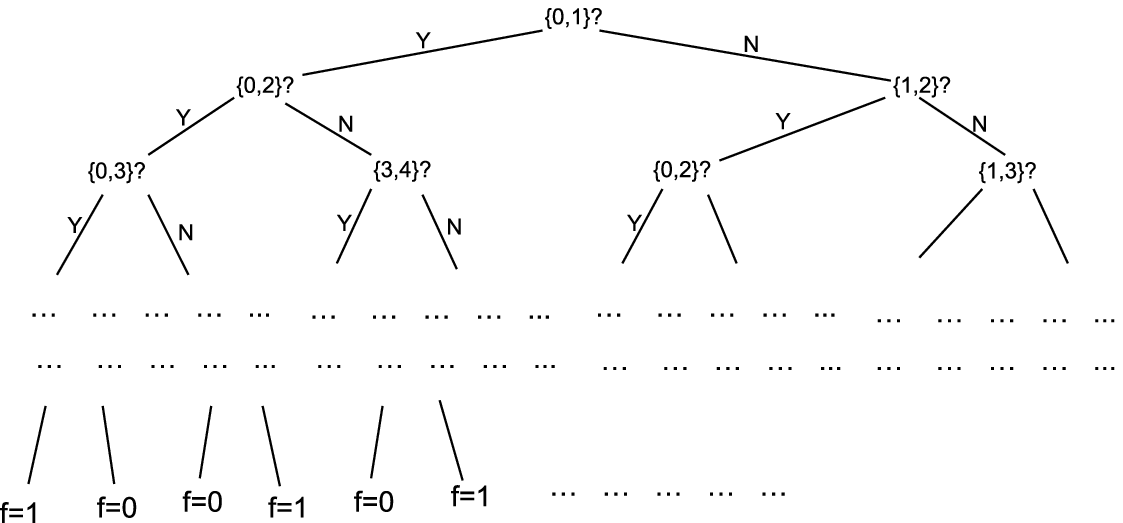}}
\fcaption{A binary decision tree.\label{dectreefigure}}
\end{figure}

Some graph properties are difficult to compute.  For example, let $h( Z ) = 1$ if and
only if $Z$ contains a cycle.  Suppose that an algorithm for $h$ makes queries to an
adversary whose goal is to maximize cost.  The adversary can adaptively construct a graph
$Y$ to foil the \hbox{algorithm}:~each time a pair $( i, j ) \in V \times V$ is queried,
the adversary answers ``yes,'' unless the inclusion of that edge would necessarily make
the graph $Y$ have a cycle, in which case he answers ``no.'' After $\binom{n}{2} - 1$
edge-queries by the algorithm have been made, the known edges will form a tree on the
elements of $V$.  The algorithm at this point will have no choice but to query the last
unknown edge to determine whether or not a cycle exists. We conclude from this argument
that $h$ is a graph property that has the maximal decision-tree complexity
$\binom{n}{2}$.  Such properties are called \textbf{evasive}.

A graph property is \textbf{monotone} if it is either always preserved by the addition of
edges (monotone-increasing) or always preserved by the deletion of edges
(monotone-decreasing). In 1973 the following conjecture was made \cite{rosenberg1973}.

{\makeatletter
\newtheoremstyle{nowthm}{4pt plus6pt minus4pt}{0pt}{\upshape}{0pt}{\bfseries}{}{.6em}
  {\rule{\textwidth}{.5pt}\par\vspace*{-1pt}\newline\thmname{#1}\thmnumber{\@ifnotempty{#1}{\hspace*{3.65pt}}{#2}$\!\!$}
  \thmnote{{\the\thm@notefont\bf (#3).}}}
\def\@endtheorem{\par\vspace*{-7.8pt}\noindent\rule{\textwidth}{.5pt}\vskip8pt plus6pt minus4pt}
\ignorespaces \makeatother

\begin{conj}[The Karp Conjecture]
All nontrivial monotone graph properties are evasive.
\end{conj}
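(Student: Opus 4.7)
The plan is to attempt a proof by contradiction: suppose $f$ is a nontrivial monotone-increasing graph property on $n$ vertices that is not evasive. Associated to $f$ is the simplicial complex $\Delta_f$ whose faces are those graphs $Z$ on $V$ satisfying $f(Z) = 0$; monotonicity makes this a bona fide simplicial complex on the edge set of the complete graph on $V$. The first reduction I would carry out is to show that non-evasiveness forces strong combinatorial structure on $\Delta_f$, namely that $\Delta_f$ admits a sequence of elementary collapses down to a point, and in particular is acyclic (has trivial reduced homology over a suitable coefficient ring).

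Second, I would exploit the fact that $f$ is a graph property: because isomorphic graphs take the same value, every permutation of $V$ induces a simplicial automorphism of $\Delta_f$, so $\Delta_f$ carries a natural action of the full symmetric group $\Sym(V)$. The goal is to find a subgroup $G$ of $\Sym(V)$ such that (i) a topological fixed-point theorem guarantees that the $G$-action on $\Delta_f$ has a fixed simplex, and (ii) the orbit structure of $G$ on pairs from $V$ forces any $G$-invariant graph to be either empty or complete. Since neither the empty nor the complete graph lies in $\Delta_f$ when $f$ is nontrivial, this gives the desired contradiction.

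The third step is to supply the fixed-point theorem and to choose $G$. Oliver's theorem on fixed points of finite group actions on acyclic complexes is the right tool: it produces a fixed simplex whenever $G$ admits a normal series whose successive quotients are a $p$-group, a cyclic group, and a $q$-group, for primes $p$ and $q$. When $n$ is a prime power $p^k$, one can identify $V$ with an affine space over $\mb{F}_{p^k}$ and let $G$ be a suitable subgroup of the affine linear group; double transitivity of $G$ on $V$ makes the only $G$-invariant graphs the trivial ones, and the normal series required by Oliver's theorem is visible from the structure of the affine group.

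The hard part---and the reason the Karp conjecture remains open---is that for general $n$ the symmetric group $\Sym(V)$ need not contain a subgroup that simultaneously satisfies Oliver's hypothesis and acts with only trivial invariant graphs. So I expect this plan to yield the conjecture only for $n$ a prime power (following Kahn, Saks, and Sturtevant), and I would flag the passage from prime-power $n$ to arbitrary $n$ as the decisive obstacle that the topological machinery developed in this text does not overcome.
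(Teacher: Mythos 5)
You have correctly recognized that this is a \emph{conjecture}, not a theorem---the paper does not prove it and no proof is known. Your proposal accurately reconstructs the topological strategy that the paper develops to attack it (translate non-evasiveness into collapsibility of $\Delta_f$, hence $\mathbb{F}_p$-acyclicity; apply Oliver's fixed-point theorem to a well-chosen subgroup $G \subseteq \Sym(V)$; use transitivity on ordered pairs to force the invariant graph to be empty or complete), and you correctly flag the decisive obstacle: for general $n$, no subgroup of $\Sym(V)$ is known that simultaneously satisfies Oliver's group-theoretic hypothesis and acts with only trivial invariant graphs. This is exactly where the method stops short, and it is why the paper proves evasiveness only for $|V| = p^k$ (Theorem~\ref{kss1quote}) and, for general $n$, settles for the asymptotic lower bound of Theorem~\ref{kss2quote} obtained by reducing to nearby prime orders.

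One small discrepancy worth noting: you state Oliver's hypothesis in its strongest form (a normal series with a $p$-group, then a cyclic group, then a $q$-group). The version the paper actually proves and uses (Theorem~\ref{fptquote}, realized as Theorems~\ref{nonabelianfixedpointtheorem} and~\ref{fptinit}) is the weaker two-stage condition: a normal $p$-power-order subgroup $G'$ with $G/G'$ cyclic. The affine group $\{x \mapsto ax+b\}$ over $\mathbb{F}_{p^k}$ satisfies this weaker condition, so nothing in the prime-power argument is affected; but a reader following your sketch against the text should be aware that the paper does not prove the full three-stage Oliver theorem. Also, your claim that an $\mathbb{F}_p$-acyclic complex is ``acyclic over a suitable coefficient ring'' is fine, but the paper's argument (via Proposition~\ref{acyclicitypreservation}, the Smith-theory step) depends essentially on using $\mathbb{F}_p$ coefficients where $p$ matches the order of the acting cyclic group; the choice of ring is not free.

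In short: your outline is a faithful summary of the partial progress, and your identification of the gap---no suitable $G$ for composite $n$---is precisely the open problem. There is no proof of the conjecture for you to have missed.
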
}

\noindent To date, this conjecture is unproven and no counterexamples are known. However
in 1984, a seminal paper was published by Kahn et~al.~\cite{kss1984} which proved the conjecture in some cases.  This
paper showed that evasiveness can be established through the use of topological
fixed-point theorems.  It has been followed by many more papers which exploited its
method to prove better results.

This text is a tutorial on the topological method of~\cite{kss1984}. My goal is to
provide background on the problem and to take the reader through all of the necessary
proofs. Let us begin with some history.

\section{Background}

Research on the decision-tree complexity of graph properties---including properties for
both directed and undirected graphs---dates back at least to the early 1970s
\cite{bbl1974,bollobas1976,hr1972,ht1974,kirkpatrick1974,mw1975,rosenberg1973}. Proofs
were given in early papers that certain specific graph properties are evasive (e.g.,
connectedness, containment of a complete subgraph of fixed size), and that other
properties at least have decision-tree complexity $\Omega (n^2)$. Although it was known
that there are graph properties whose decision-tree complexity is not $\Omega (n^2)$ (see
Example~18 in \cite{bbl1974}),  Aanderaa and Rosenberg conjectured that all
\textbf{monotone} graph properties have decision-tree complexity $\Omega (n^2)$
\cite{rosenberg1973}. This conjecture was proved by Rivest and Vuillemin \cite{rv1976}
who showed that all monotone graph properties satisfy $D (f) \geq n^2/16$.  Kleitman and
Kwiatkowski \cite{kk1980} improved this bound to $D (f) \geq n^2/9$.

Underlying some of these proofs is the insight that if a graph property $f$ has
nonmaximal decision-tree complexity, then the collection of graphs that satisfy $f$ have
some special structure.  For example, if $f$ is not evasive, then in the set of graphs
satisfying $f$ there must be an equal number of graphs having an odd number of edges and
an even number of edges. Rivest and Vuillemin \cite{rv1976} used the fact that if $f$ has
decision-tree complexity $\binom{n}{2} - k$, then the weight enumerator of $f$ (i.e., the
polynomial $\sum_j c_j t^j$, where $c_j$ is the number of $f$-graphs containing~$j$
edges) must be divisible by $(1 + t)^k$.

A topological method for the evasiveness problem was introduced in~\cite{kss1984}.
Suppose that $h$ is a monotone-increasing graph property on a vertex set $\{ 0, 1,
\ldots, n-1 \}$. Let $T$ be the collection of all graphs that do \textit{not} satisfy
$h$.  The set $T$ has the property that if $G$ is in $T$, then all of its subgraphs are
in $T$.  This is a close analogy to the property which defines simplicial complexes in
topology. Let $\{ x_{ab} \mid 0 \leq i < j < n\}$ be a labeled collection of linearly
independent vectors in some vector space $\mathbb{R}^N$. Each graph in $T$ determines a
simplex in $\mathbb{R}^N$: one takes the convex hull of the vectors $x_{ab}$
corresponding to the edges $\{ a, b \}$ that are in the graph.  The union of these hulls
forms a simplicial complex, $\Gamma_h$. The complex for ``connectedness'' on
four vertices (represented in three dimensions) is shown in
Figure~\ref{connectednessfigure}.

\begin{figure}[!t]
 \centerline{\includegraphics{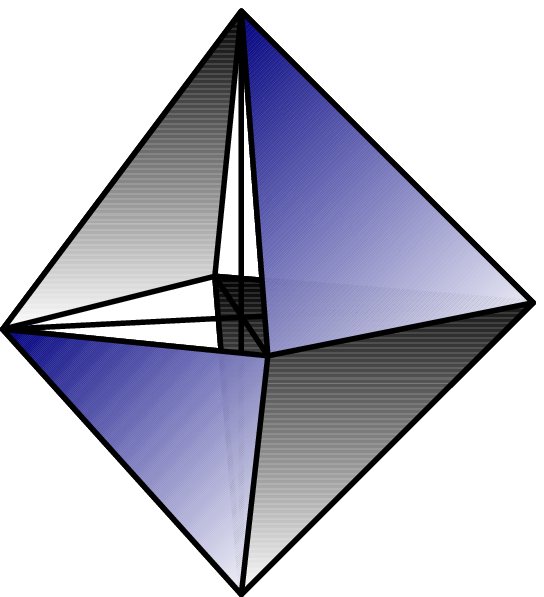}}
\fcaption{The simplicial complex for ``connectedness'' on four
vertices.\label{connectednessfigure}}
\end{figure}

A fundamental insight of \cite{kss1984} is that nonevasiveness can be translated to a
topological condition.  If $h$ is not evasive, then $\Gamma_h$ has a certain topological
property called \textbf{collapsibility}.  This property, which we will define formally
later in this text, essentially means that $\Gamma_h$ can be folded into itself and
contracted to a single point. This property implies the even--odd weight-balance
condition mentioned above, but it is stronger.  In particular, it allows for the
application of topological fixed-point theorems.

The following theorem is attributed to R.~Oliver.

{\makeatletter
\newtheoremstyle{nowthm}{4pt plus6pt minus4pt}{0pt}{\upshape}{0pt}{\bfseries}{}{.6em}
  {\rule{\textwidth}{.5pt}\par\vspace*{-1pt}\newline\thmname{#1}\thmnumber{\@ifnotempty{#1}{\hspace*{3.65pt}}{#2}$\!\!$}
  \thmnote{{\the\thm@notefont\bf (#3).}}}
\def\@endtheorem{\par\vspace*{-7.8pt}\noindent\rule{\textwidth}{.5pt}\vskip8pt plus6pt minus4pt}
\ignorespaces \makeatother

\begin{theorem}[Oliver \cite{oliver1975}]\label{fptquote}
Let $\Gamma$ be a collapsible simplicial complex. Let $G$ be a finite group which
satisfies the following condition:
\begin{itemize}
\item[(*)] There is a normal subgroup $G' \subseteq G$, whose
size is  a power of a prime, such that $G / G'$ is cyclic.
\end{itemize}
Then, any action of $G$ on $\Gamma$ has a fixed point.
\end{theorem}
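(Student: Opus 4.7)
The plan is to decouple the two pieces of the hypothesis on $G$ and handle them by distinct topological mechanisms. The first piece is the normal $p$-subgroup $G'$ of prime-power order, and the second is the cyclic quotient $G/G'$. If I can show that (i) $\Gamma^{G'}$ is nonempty and still $\mb{F}_p$-acyclic, and (ii) any cyclic group action on an acyclic simplicial complex has a fixed point, then combining these gives a simplex fixed by all of $G'$ and by a generator of $G/G'$, hence by all of $G$.

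For step (i), the main ingredient I would develop is a Smith-theoretic lemma: if $P$ is a finite $p$-group acting simplicially on an $\mb{F}_p$-acyclic simplicial complex $X$, then $X^P$ is itself $\mb{F}_p$-acyclic. I would first note that collapsibility of $\Gamma$ implies $\mb{F}_p$-acyclicity, because each elementary collapse (removing a principal simplex together with its free face) deformation-retracts the chain complex over any coefficient ring. To prove the Smith lemma, I would reduce to the case $P = \mb{Z}/p$ by induction on $|P|$, using that every nontrivial $p$-group has a central subgroup of order $p$ so that the larger group acts on the fixed set of that central subgroup. For the $\mb{Z}/p$ case, I would work directly with the cellular chain complex $C_*(X; \mb{F}_p)$, split it as the fixed chains $C_*(X^P; \mb{F}_p)$ plus a subcomplex generated by non-fixed simplices (which occur in free $P$-orbits of size $p$), and use the identity $1 + g + g^2 + \cdots + g^{p-1} = (1-g)^{p-1}$ in $\mb{F}_p[\mb{Z}/p]$ to relate the homology of these pieces via a long exact sequence.

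For step (ii), I would establish a baby Lefschetz fixed-point theorem for cyclic actions on acyclic simplicial complexes. Let $g$ generate $G/G'$ acting simplicially on $Y:=\Gamma^{G'}$ (after a sufficiently fine barycentric subdivision so that $g$ carries simplices to simplices and any simplex setwise fixed by $g$ contains a pointwise fixed vertex). Compute the Lefschetz number
\begin{equation}
\Lambda(g) = \sum_{i \geq 0} (-1)^i \operatorname{tr}\bigl(g_* : H_i(Y; \mb{Q}) \to H_i(Y; \mb{Q})\bigr).
\end{equation}
Since $Y$ is acyclic, only $H_0(Y; \mb{Q}) = \mb{Q}$ contributes, so $\Lambda(g) = 1$. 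On the other hand, a Hopf-trace calculation on the chain level gives $\Lambda(g) = \sum_\sigma (-1)^{\dim \sigma} \varepsilon(\sigma)$, summed over simplices $\sigma$ with $g\sigma = \sigma$ (with $\varepsilon(\sigma) = \pm 1$ recording the sign of the induced permutation). A nonzero value forces the existence of such a $\sigma$; after subdivision this yields a genuine fixed point.

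The main obstacle is step (i), specifically the Smith lemma over $\mb{F}_p$. Euler-characteristic counting alone ($\chi(X^P) \equiv \chi(X) \pmod p$, from summing orbit sizes) is easy but only yields nonemptiness of $X^{G'}$, not the $\mb{F}_p$-acyclicity I need so that step (ii) is even applicable. Upgrading from Euler characteristics to full acyclicity requires genuine chain-level work with the short exact sequence
\begin{equation}
0 \to C_*(X^P; \mb{F}_p) \to C_*(X; \mb{F}_p) \to C_*(X; \mb{F}_p)/C_*(X^P; \mb{F}_p) \to 0,
\end{equation}
combined with the Smith long exact sequence linking the kernel and image of $(1-g)$ on the quotient. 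This is the delicate core of the proof; once it is in hand, assembling the two steps into the statement of the theorem is essentially formal.
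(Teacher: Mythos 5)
Your architecture is essentially identical to the paper's: collapsibility gives $\mathbb{F}_p$-acyclicity, a Smith-theoretic lemma (the paper's Proposition~\ref{acyclicitypreservation}, bootstrapped to all $p$-groups in Corollary~\ref{pgroupactioncorollary}) preserves $\mathbb{F}_p$-acyclicity under the normal $p$-group $G'$, and a Lefschetz trace argument (Theorem~\ref{acyclicimplieslft}) then handles a lift of a generator of the cyclic quotient $G/G'$, with barycentric subdivision invoked to make fixed sets genuine subcomplexes. The one spot where you diverge is computing the Lefschetz number over $\mathbb{Q}$: Smith theory only hands you $\mathbb{F}_p$-acyclicity of $\Gamma^{G'}$, so you would need to invoke universal coefficients (true for finite complexes, but an extra step you don't spell out) to pass to $\mathbb{Q}$, whereas the paper sidesteps this by running the Hopf-trace computation entirely over $\mathbb{F}_p$, where it still forces the alternating sum of fixed simplices to be $1 \not\equiv 0 \pmod{p}$.
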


When $\Gamma = \Gamma_h$, the fixed points of $G$ correspond to graphs, and this theorem
essentially forces the existence of certain graphs that do not satisfy $h$.  This theorem
is the basis for the following result of \cite{kss1984}:

\begin{theorem}[Kahn et~al.~\cite{kss1984}]\label{kss1quote}
Let $f$ be a monotone graph property on graphs of size $p^k$, where $p$ is prime.  If $f$
is not evasive, then it must be trivial.
\end{theorem}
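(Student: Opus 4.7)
The plan is to derive a contradiction by applying Theorem~\ref{fptquote} to a carefully chosen group action on $\Gamma_f$. Assume $f$ is a nontrivial monotone graph property on graphs with vertex set $V$ of size $n = p^k$, and suppose for contradiction that $f$ is non-evasive. Replacing $f$ by $1 - f$ if necessary---an operation that preserves monotonicity, nontriviality, and non-evasiveness---I may assume $f$ is monotone-increasing. Let $T = f^{-1}(0)$, which is closed under edge deletion, and form the simplicial complex $\Gamma_f$ on the vertex set $\binom{V}{2}$ whose simplices are precisely the edge sets of graphs in $T$. The crucial topological input, assumed to have been proved earlier in the text, is that non-evasiveness of $f$ forces $\Gamma_f$ to be collapsible. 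Nontriviality combined with monotonicity gives $f(\emptyset) = 0$ and $f(K_n) = 1$.

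Next I identify $V$ with the finite field $\mathbb{F}_{p^k}$ and take $G = \mathrm{AGL}(1, \mathbb{F}_{p^k})$, the group of affine maps $x \mapsto ax + b$ for $a \in \mathbb{F}_{p^k}^\times$, $b \in \mathbb{F}_{p^k}$. The translation subgroup $G' = \{x \mapsto x + b\}$ is normal in $G$, has order $p^k$, and $G/G' \cong \mathbb{F}_{p^k}^\times$ is cyclic, so $G$ satisfies Oliver's condition $(*)$. The action of $G$ on $V$ induces an action on $\binom{V}{2}$, which preserves $T$ because $f$ is a graph property; this yields a simplicial action of $G$ on $\Gamma_f$.

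Now I apply Theorem~\ref{fptquote} to obtain a fixed point $v \in \Gamma_f$. Since $v$ lies in the relative interior of a unique simplex $\sigma$, the identity $g \cdot v = v$ forces $g \cdot \sigma = \sigma$ for every $g \in G$; hence the vertex set of $\sigma$ is a nonempty $G$-invariant subset of $\binom{V}{2}$. A direct check shows that $G$ is sharply $2$-transitive on $V$, hence transitive on $\binom{V}{2}$, so the only nonempty $G$-invariant subset of $\binom{V}{2}$ is $\binom{V}{2}$ itself. This forces $K_n \in T$, i.e., $f(K_n) = 0$, contradicting nontriviality.

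The main obstacle I anticipate is the degenerate case in which $\Gamma_f$ has no vertices at all---that is, $T = \{\emptyset\}$, so $f$ is the property ``contains at least one edge.'' Here Oliver's theorem cannot be applied as stated, since a collapsible complex must contain at least one point, and the argument above therefore must be supplemented by an elementary direct observation: ``contains at least one edge'' is itself evasive, because an adversary who answers ``no'' to every edge query forces $\binom{n}{2}$ queries. This case is accordingly already excluded by the non-evasiveness hypothesis. Beyond this, the verification that $\mathrm{AGL}(1, \mathbb{F}_{p^k})$ satisfies $(*)$ and is $2$-transitive, and the translation of a topological fixed point into a $G$-invariant simplex, are essentially mechanical.
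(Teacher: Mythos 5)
Your proposal is correct and follows essentially the same route as the paper's proof: reduce to the monotone-increasing case, identify $V$ with $\mathbb{F}_{p^k}$, act by the affine group $\mathrm{AGL}(1,\mathbb{F}_{p^k})$ (whose translation subgroup is a normal $p$-group with cyclic quotient), invoke Oliver's fixed-point theorem on the collapsible complex $\Gamma_f$, and use $2$-transitivity to force the complete graph into $f^{-1}(0)$, contradicting nontriviality. You also correctly anticipate and dispose of the degenerate case $\Gamma_f = \emptyset$, which the paper handles in a footnote by the same direct observation.
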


\noindent The proof of this theorem essentially proceeds by demonstrating an appropriate
group action $G$ on the set of graphs of order $p^k$ such that the only $G$-invariant
graphs are the empty graph and the complete graph.

Thus evasiveness is known for all values of $n$ that are prime powers.  What about other
values of $n$? One could hope that if the decision-tree complexity is always
$\binom{p}{2}$ when the vertex set is size $p$, then the quantity $\binom{p}{2}$ is a
lower bound for the cases $p+1$, $p+2$, and so forth.  Unfortunately there is no known
way to show this.  However, all is not lost. The following general theorem is also proved
in \cite{kss1984}.

\begin{theorem}[Kahn et~al. \cite{kss1984}]\label{kss2quote}
Let $f$ be a nontrivial monotone graph property of order $n$.  Then,
\begin{eqnarray}
D (  f ) \geq \frac{n^2}{4} - o ( n^2 ).
\end{eqnarray}
\end{theorem}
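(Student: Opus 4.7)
The strategy is to reduce to the prime case covered by Theorem~\ref{kss1quote}. Since $D(f)$ is invariant under replacing $f$ by $1 - f$ (which swaps monotone-decreasing and monotone-increasing), assume $f$ is monotone-increasing. Pick a prime $p \leq n$ with $\binom{p}{2} \geq n^2/4 - o(n^2)$; this requires only $p \geq (1/\sqrt{2} - o(1))\,n$, which follows from classical prime gap estimates for $n$ large. Fix a subset $V' \subset V$ with $|V'| = p$ and let $W = V \setminus V'$.

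The plan is to produce a graph $Y$ on $V$ whose edges avoid $V'$ and which is invariant under the subgroup $S_{V'} \subset S_V$ of permutations acting on $V'$ and fixing $W$ pointwise, such that the function $g_Y \colon \mathbf{G}(V') \to \{0,1\}$ defined by $g_Y(Z) = f(Z \cup Y)$ is a nontrivial monotone graph property on $V'$. Given such a $Y$, Theorem~\ref{kss1quote} gives $D(g_Y) = \binom{p}{2}$, while a straightforward simulation (answer ``present'' for queries to $Y$-edges, ``absent'' for queries to edges outside $V' \cup Y$, pass queries inside $V'$ to the $g_Y$-oracle) yields $D(f) \geq D(g_Y) \geq \binom{p}{2} \geq n^2/4 - o(n^2)$, the desired bound.

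To produce $Y$, build a saturated chain $\emptyset = Y_0 \subset Y_1 \subset \cdots \subset Y_m = Y_{\max}$ of $S_{V'}$-invariant graphs (each without edges inside $V'$), where each increment $Y_i \setminus Y_{i-1}$ is a single $S_{V'}$-orbit---either an edge inside $W$ or a star $E_v = \{\{v, u\} : u \in V'\}$ for some $v \in W$---and $Y_{\max}$ contains all edges outside $V'$. The pair $(g_{Y_i}(\emptyset), g_{Y_i}(K_{V'})) = (f(Y_i), f(Y_i \cup K_{V'}))$ is pointwise nondecreasing along the chain, equals $(0,0)$ at $i = 0$ (since $f(\emptyset) = 0$ by nontriviality), and equals $(1,1)$ at $i = m$ (since $f(K_V) = 1$). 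If it passes through the nontrivial state $(0,1)$, the corresponding $g_{Y_i}$ is nontrivial, and we are done.

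The principal obstacle is to rule out a direct $(0,0) \to (1,1)$ jump. Ordering the chain so that all $W$-internal edges are added before the stars, in the first phase each $Y_i$ has at most $n - p \leq p$ non-isolated vertices and therefore embeds isomorphically into $V'$; iso-invariance and monotonicity of $f$ then force $f(Y_i) \leq f(K_{V'}) \leq f(K_{V'} \cup Y_{i-1}) = 0$ at any hypothetical jump, a contradiction. The second phase---adding stars $E_v$ one at a time---is more delicate: a jump here must be ruled out by a refined argument that exploits the symmetry between a star at $v \in W$ and the ``clique position'' of $v$ in $W$ to transfer any jump witness into a nontrivial $g_{Y'}$ on a suitable alternative $p$-subset $V'' \subset V$. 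This second-phase case analysis is the technical heart of the proof.
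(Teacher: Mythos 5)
Your approach is genuinely different from the paper's, and it has a gap that is concrete, not merely cosmetic. The paper (Proposition~\ref{generallowerbound}) never tries to carve out a full nontrivial property on a $p$-element vertex set. Instead it peels one vertex at a time: if $h(K_{1,n-1}) = 0$ it restricts modulo a fixed star, if $h(K_{n-1}) = 1$ it restricts to the first $n-1$ vertices, and in the remaining ``tight'' case---$h(K_{1,n-1}) = 1$ and $h(K_{n-1}) = 0$---it extracts a \emph{bipartite} property $h'(P) = h(P \cup K_m)$, $m = \lfloor n/2 \rfloor$, and invokes Yao's theorem to get the lower bound $m(n-m) \geq (n-1)^2/4$. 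The factor-of-two loss relative to $\binom{p}{2}$ is exactly the price paid for dealing cleanly with the tight case. You are instead trying to realize the full $\binom{p}{2}$ via a fixed $p$-subset $V'$ and an $S_{V'}$-invariant padding graph $Y$, and this is where the jump problem you flag is real.

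Here is a concrete witness that your chain (as ordered: $W$-internal edges first, then stars) does jump $(0,0) \to (1,1)$. Take $f(Z) = 1$ iff $Z$ has a vertex of degree $n-1$. This is nontrivial, monotone increasing, and falls into the paper's tight case since $f(K_{1,n-1}) = 1$ and $f(K_{n-1}) = 0$. At the end of Phase~1 you have $Y_j = K_W$: its maximum degree is $n-p-1 < n-1$, and in $Y_j \cup K_{V'}$ the maximum degree is $\max(n-p-1,\, p-1) < n-1$, so the pair is $(0,0)$. Adding a single star $E_{v_1}$ gives $v_1$ degree $(n-p-1) + p = n-1$, so the pair is already $(1,1)$. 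The chain never visits $(0,1)$. Worse, if $n - p = 1$ (which does occur, e.g.\ $n = 12$, $p = 11$) one can check that \emph{no} $S_{V'}$-invariant $Y$ outside $V'$ gives a nontrivial $g_Y$ for this property: the only candidates are $Y = \emptyset$ (where $f(K_{V'}) = 0$, so $g_Y \equiv 0$) and $Y = E_w$ (where $f(E_w) = 1$, so $g_Y \equiv 1$). So the object you are trying to construct can fail to exist, and your ``transfer to an alternative $V''$'' step has nothing to transfer to. There is also a softer warning sign: if your Phase~2 analysis went through for primes $p$ arbitrarily close to $n$, the conclusion would be $D(f) \geq \binom{p}{2} = (1 - o(1))\binom{n}{2}$, which would essentially settle the Karp conjecture asymptotically and would surpass every known bound (including the $\tfrac{8}{25}n^2$ of Korneffel--Triesch cited in the paper). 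The paper's authors accept a weaker $p^2/4$ precisely because the tight case cannot, by this route, be pushed all the way to $\binom{p}{2}$.
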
}

\noindent The paper \cite{kss1984} was then followed by several other papers on
evasiveness by other authors who used the topological approach to prove new results on
evasiveness \cite{bbkk2010,cks2002,king1990,kt2010,triesch1994,triesch1996,yao1988}. Some
of these papers found new group actions $G \circlearrowleft \Delta_h$ to exploit in the
nonprime cases.

The target results of this exposition are Theorems~\ref{kss1quote} and \ref{kss2quote}, and a theorem by Yao on evasiveness of bipartite graphs
\cite{yao1988}.  Now let us summarize what we need to do in order to get there.

\section{Outline of Text}

My goal in this exposition is to give a reader who does not know \hbox{algebraic}
topology a complete tutorial on topological proofs of evasiveness.  Therefore, a fair
amount of space will be devoted to building up concepts from algebraic topology.  I have
tended be economical in my \hbox{discussions} and to develop concepts only on an
as-needed basis. Readers who wish to learn more algebraic topology after this exposition
may want to consult good references such as \cite{hatcher2002,munkres}.

We begin, in \textit{\nameref{basicconceptschapter}}, by formalizing the class of
simplicial complexes and its relation to the class of graph properties. While
we have presented a simplicial complex in this introduction as a subset of
$\mathbb{R}^n$, it can also be defined simply as a collection of finite sets.  (This is
the notion of an \textbf{abstract simplicial complex}.)  Although the definition in terms
of subsets of $\mathbb{R}^n$ is helpful for intuition, the definition in terms of finite
sets is the one we will use in all proofs.

A critical construction in this monograph is the set of \textbf{homology
groups} of a simplicial complex.  These groups are algebraic objects which measure the
shape of the complex, and also~--- crucially for our purposes~--- help us understand the
behavior of the complex under automorphisms. \textit{\nameref{chaincomplexchapter}}
defines homology groups and provides some of the standard theory for them.

In \textit{\nameref{fptchapter}} we prove some topological results. The first is the
Lefschetz fixed-point theorem. One way to state this theorem is to say that any
automorphism of a collapsible simplicial complex has a fixed point.  However we instead
prove a theorem which applies to the more general class of
\textbf{$\mathbb{F}_p$-acyclic} complexes.  A simplicial complex is
$\mathbb{F}_p$-acyclic if its homology groups (over $\mathbb{F}_p$) are trivial.  When a
simplicial complex is $\mathbb{F}_p$-acyclic it behaves much like a collapsible complex
(and in particular, any automorphism has a fixed point). Finally, we prove a version of
Theorem~\ref{fptquote}.  The proof of the theorem depends on finding a tower of subgroups
\begin{eqnarray}
\{ 0 \} = G_0 \subset G_1 \subset G_2 \subset \cdots \subset G_n = G,
\end{eqnarray}
where each quotient $G_i / G_{i-1}$ is cyclic, and performing an inductive argument.

\textit{\nameref{resultschapter}} proves \hbox{Theorem}~\ref{kss1quote}, a
\hbox{bipartite} result of Yao \cite{yao1988}, and Theorem~\ref{kss2quote}. We
conclude with an informal discussion of a few of the more recent results
on decision-tree complexity of graph properties
\cite{bbkk2010,cks2002,king1990,kt2010,triesch1994,triesch1996}.

My primary sources for this exposition were
\cite{duandko,kss1984,munkres,smith1941,yao1988}.  A particular debt is owed to Du
and Ko~\cite{duandko}, which was my first introduction to the subject.

\section{Related Topics}

I will briefly mention two alternative lines of research that are related to the one I
cover here.  One can change the measure of complexity that one is using to measure graph
properties, and this leads to new problems requiring different methods.  A natural
variant is the \textbf{randomized decision-tree complexity.} Suppose that in our
decision-tree model, our algorithm is permitted to make random choices at each step about
which edges to check.  We define the cost of the algorithm on a particular input graph to
be the \textit{expected} number of edge queries, and the cost of the algorithm as a whole
to be the maximum of this quantity over all input graphs. The minimum of this quantity
over all algorithms is the randomized decision-tree complexity, $R ( f )$.

There is a line of research studying the randomized decision tree complexity of monotone
graph properties
\cite{ck2007,fkw2002,groger1992,hajnal1991,king1991,odonnel2005,yao1991}. While it is
easy to see that $R ( f )$ can be less than $\binom{n}{2}$, there are graph properties
for which $R ( f )$ is provably $\Omega ( n^2 )$ (such as the ``emptiness
property''---the property that the graph contains no edges). It is conjectured that $R (
f )$ is always $\Omega ( n^2 )$ for monotone graph properties, just as in the
deterministic model.  The best proved lower bound \cite{ck2007, hajnal1991} is $\Omega (
n^{4/3} \left( \log n \right)^{1/3})$.

Another variant of decision-tree complexity is \textbf{bounded-error quantum query
complexity}. A quantum query algorithm for a graph property uses a quantum ``oracle'' in
its computation. The oracle accepts a quantum state which is a superposition of
edge-queries to a graph, and it returns a quantum state which encodes the answers to
those queries.  The algorithm is permitted to use this oracle along with arbitrary
quantum operations to determine its result. The algorithm is permitted to make errors,
but the likelihood of an error must be below a fixed bound on all inputs.
(See~\cite{bcwz1999}.)

In the quantum case it is clear that a lower bound of $\Omega ( n^2 )$ does not hold:
Grover's algorithm~\cite{ambainis2004} can search a space of size $N$ in time $\Theta (
\sqrt{N} )$ using an oracle model.  With a modified version of Grover's algorithm, one
can compute the emptiness property in time $\Theta ( n )$. There are a number of other
monotone properties for which the quantum query complexity is known to be $o ( n^2 )$
(see \cite{ck2010} for a good summary on this topic). It is conjectured that all monotone
graph properties have quantum query complexity $\Omega ( n )$. The best proved lower
bound is $\Omega ( n^{2/3} )$, from an unpublished result attributed to Santha and Yao
\hbox{(see \cite{syz2004})}.

\section{Further Reading}

Other expositions about topological proofs of evasiveness can be found in \cite{duandko}
(in the context of computational complexity theory) and \cite{kozlov2008} (in the context
of algebraic topology), and also in Lovasz's lecture notes \cite{ly2002}. A reader who
wishes to learn more about algebraic topology can consult \cite{munkres}, or, for a more
advanced treatment, \cite{hatcher2002}.  For the particular subject of the topology of
complexes arising from graphs, there is an extensive treatment \cite{jonsson2008}, which
builds further on many of the concepts that I will discuss here.  And finally, for
readers who generally enjoy reading about applications of topology to problems in
discrete mathematics, the excellent book \cite{matousek2008} contains more material of
the same flavor.  It involves applications of a different topological result (the
Borsuk--Ulam theorem) to some problems in elementary mathematics.

\chapter{Basic Concepts}\label{basicconceptschapter}

\section{Graph Properties}\label{graphpropsection}

This part of the text covers some preliminary material.  We begin by formalizing some
basic terminology for finite graphs.

For our purposes, a \textbf{finite graph} is an ordered pair of sets $(V, E)$, in
which $V$ (the \textbf{vertex set}) is a finite set, and $E$ (the \textbf{edge set}) is
a set of $2$-element subsets of $V$.  For example, the pair
\begin{eqnarray}\label{graphoforder4}
\left( \left\{ 0, 1, 2, 3 \right\} , \left\{ \{ 0, 1 \} ,
\{ 0, 2 \} , \{ 1, 2 \}, \{ 2, 3 \} \right\} \right)
\end{eqnarray}
is a finite graph with four vertices, diagrammed in Figure~\ref{4graphfigure}.

\begin{figure}
 \centerline{\includegraphics{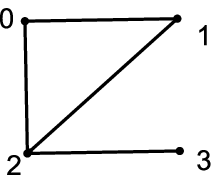}}
\fcaption{A graph on four vertices.\label{4graphfigure}}
\end{figure}

An \textbf{isomorphism} between two finite graphs is a one-to-one correspondence between
the vertices of the two graphs which matches up their edges.  In precise terms, if $G =
(V, E)$ and $G' = ( V' , E' )$ are two graphs, then an isomorphism between $G$ and $G'$
is a bijective function $f : V \to V'$ which is such that the set
\begin{eqnarray}
\left\{ \{ f ( v ) , f ( w ) \} \mid
\{ v , w \} \in E \right\}
\end{eqnarray}
is equal to $E'$.  For example, the graph in Figure~\ref{4graphfigure} is
isomorphic to the graph in Figure~\ref{4graphaltfigure} under the map $f \colon \{ 0, 1,
2, 3 \} \to \{ 0, 1, 2, 3 \}$ defined~by
\begin{eqnarray}
f( 0 ) = 1 &\quad & f ( 1 ) = 2 \\
f( 2 ) = 3 &\quad & f ( 3 ) = 0.
\end{eqnarray}

\begin{figure}[!b]
 \centerline{\includegraphics{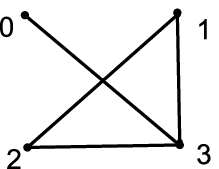}}
\fcaption{A graph that is isomorphic to the graph in
Figure~\ref{4graphfigure}.\label{4graphaltfigure}}
\end{figure}

We can now formalize the notion of a graph property.  Briefly
stated, a graph property is a function on graphs which is
compatible with graph isomorphisms.
Let
$V_0$ be a finite set, and let $\mathbf{G} \left( V_0 \right)$
denote the set of all graphs that have $V_0$ as their vertex set.
Then a function
\begin{eqnarray}
h \colon \mathbf{G} \left( V_0 \right) \to \{ 0, 1 \}
\end{eqnarray}
is a \textbf{graph property} (over $V_0$) if all pairs $(G, G')$ of isomorphic graphs
in $\mathbf{G} \left( V_0 \right)$ satisfy $h ( G ) =
h ( G' )$.

\begin{figure}[!b]
\label{graphsoforder3}
 \centerline{\includegraphics{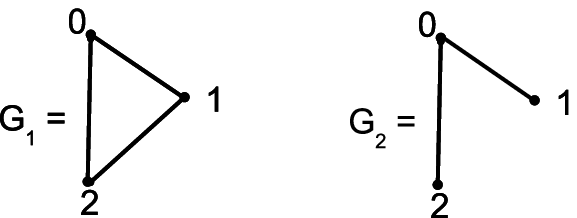}}
\fcaption{Two graphs of size $3$.\label{twographsfigure}}
\end{figure}

For example, consider the graphs in Figure~\ref{twographsfigure}, which are
members of $\mathbf{G} \left( \{ 0, 1, 2 \} \right)$. Then the function
\begin{eqnarray}
h_1 \colon \mathbf{G} \left( \{ 0, 1, 2 \} \right) \to \{ 0, 1 \}
\end{eqnarray}
defined by
\begin{eqnarray}
h_1 ( G ) = \left\{ \begin{array}{@{}l@{\quad}l@{}} 1 & \textnormal{if } G = G_1 \\
0 & \textnormal{if } G \neq G_1
\end{array} \right.
\end{eqnarray}
is a graph property.  However, the function $h_2$ defined by
\begin{eqnarray}
h_2 ( G ) = \left\{ \begin{array}{@{}l@{\quad}l@{}} 1 & \textnormal{ if } G = G_2 \\
0 & \textnormal{ if } G \neq G_2
\end{array} \right.
\end{eqnarray}
is {\it not} a graph property, since there exist graphs in $\mathbf{G} \left( \{ 0, 1, 2
\} \right)$ which are isomorphic to $G_2$ but not equal to $G_2$.

If $G, G' \in \mathbf{G} \left( V_0 \right)$ are graphs such that the edge set of $G'$ is
a subset of the edge set of $G$, then we say that $G'$ is a \textbf{subgraph} of $G$.
Note that this relationship gives us a partial ordering on the set $\mathbf{G} \left( V_0
\right)$. Let us say that a function $h \colon \mathbf{G} \left( V_0 \right) \to \{ 0, 1
\}$ is \textbf{monotone increasing} if it respects this ordering. In other words, $h$ is
monotone increasing if it satisfies $h ( G' ) \leq h ( G )$ for all pairs $(G', G)$ such
that $G'$ is a subgraph of $G$. Likewise, we say that the function $h$ is
\textbf{monotone decreasing} if it satisfies $h ( G' ) \geq h ( G)$ whenever $G'$ is a
subgraph of $G$.

If $h \colon \mathbf{G} \left( V_0 \right) \to \{ 0, 1 \}$ is a function, then a
\textbf{decision tree} for $h$ is a step-by-step procedure for computing the
value of $h$.  An example is the decision tree in Figure~\ref{decisiontreeexample}, which
computes the value of the function $h_2$ defined above. The diagram in
Figure~\ref{decisiontreeexample} describes an algorithm for computing $h_2$.  Each node
in the tree specifies an ``edge-query'', and each branch in the tree specifies how the
algorithm responds to the results of the edge query. For example, suppose that we wish to
apply the algorithm to compute the value of $h_2$ on the graph $G_1$ (from
(\ref{graphsoforder3}), above).  The algorithm would first query the edge $\{ 0, 1 \}$,
and it would find that this edge \textit{is} contained in $G_1$.  It would then follow
the ``Y'' branch from $\{ 0, 1 \}$, and query the edge $\{ 1, 2 \}$.  It would then
follow the ``Y'' branch from $\{ 1, 2 \}$, and determine that the value of $h_2$ is zero.

\begin{figure}[!t]
 \centerline{\includegraphics{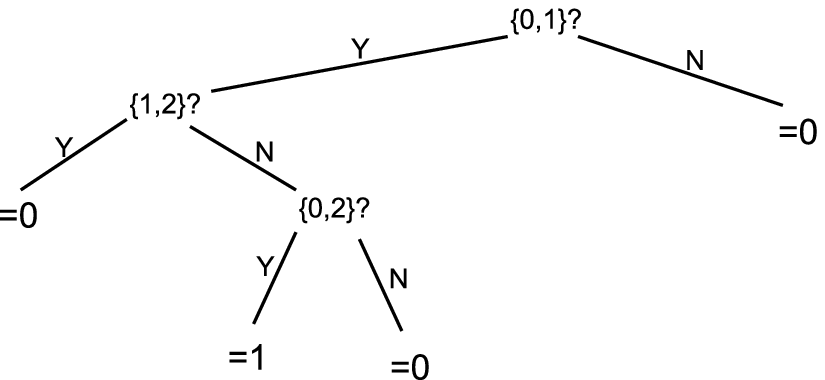}}
\fcaption{A decision tree for the graph property $h_2$.\label{decisiontreeexample}}
\end{figure}

The \textbf{decision-tree complexity} of a function $h \colon \mathbf{G} \left( V_0
\right) \to \{ 0, 1 \}$ is the smallest possible depth for a decision-tree which
correctly computes~$h$.  We denote this quantity by $D(h)$. For example, the depth of the
decision-tree in Figure~\ref{decisiontreeexample} is $3$.  It can be shown that any
decision-tree that computes $h_2$ must have depth at least $3$. Therefore, $D ( h_2 ) =
3$.

It is easy to prove that for any function $h \colon \mathbf{G} \left( V_0 \right) \to \{
0, 1 \}$, the inequality
\begin{eqnarray}
D ( h ) \leq \binom{ \left| V_0 \right|}{2}
\end{eqnarray}
is satisfied.  If the function $h$ satisfies
\begin{eqnarray}
D( h ) = \binom{ \left| V_0 \right|}{2}\!,
\end{eqnarray}
then we will say that the function $h$ is {\bf evasive}. Evasive functions are the
functions that are the most difficult to compute via a decision-tree.\footnote{The
concepts of ``decision-tree complexity'' and ``evasiveness'' can be defined for any
Boolean function.  See Chapter~\ref{resultschapter} of \cite{duandko} for a more detailed
treatment.}

\section{Simplicial Complexes}\label{simplicialcomplexsection}

Now we give a brief introduction to the notion of a simplicial complex.  We draw
on~\cite{munkres} for definitions and terminology.

There are at least two natural ways of defining simplicial complexes---one is as a
collection of finite sets, and another is as a collection of subsets of $\mathbb{R}^n$.
The first definition is the easiest to work with (and it will be the one we use the most
in this monograph). But the second definition is also important because
it provides some indispensible geometric intuition.  We will begin by building up the
second definition.

\begin{definition}
Let $N$ and $n$ be positive integers, with $n \leq N$.  Let $\mf{v}_0, \mf{v}_1, \ldots,
\mf{v}_n \in \mathbb{R}^N$ be vectors satisfying the condition that
\begin{eqnarray}
\{ ( \mf{v}_1 - \mf{v}_0 ) , ( \mf{v}_2 - \mf{v}_0 ), ( \mf{v}_3 - \mf{v}_0 ),
\ldots , ( \mf{v}_n - \mf{v}_0 ) \}
\end{eqnarray}
is linearly independent set.  Then the \textbf{$n$-simplex spanned by $\{ \mf{v}_0,
\mf{v}_1, \ldots, \mf{v}_n \}$} is the set
\begin{eqnarray}
\left\{ \sum_{i=0}^n c_i \mf{v}_i \mid \textnormal{ $0 \leq c_i \leq 1$ for all $i$, and
$\sum_{i=0}^n c_i = 1$ } \right\}\!.
\end{eqnarray}
\end{definition}

When we refer to an ``$n$-simplex'', we simply mean a set which can be defined in the
above form.  Note that a $1$-simplex is simply a line segment.  A $2$-simplex is a solid
triangle, and a $3$-simplex is a solid tetrahedron.

\begin{definition}
Let $N$ and $n$ be positive integers.  Let $v_0, \ldots, v_n \in \mb{R}^N$ be vectors
which span an $n$-simplex~$V$. Then the \textbf{faces} of~$V$ are the simplices in
$\mb{R}^N$ that are spanned by nonempty subsets of $\{ v_0, v_1, \ldots , v_n \}$.
\end{definition}

So, for example, the $2$-simplex in $\mathbb{R}^3$ shown in Figure~\ref{2simplexfigure}
has seven faces (including itself): three of dimension zero,
three of dimension~$1$, and one of dimension two.  In
general, an $n$-simplex has $\binom{n+1}{k+1}$ $k$-dimensional faces.

\begin{figure}[!b]
 \centerline{\includegraphics{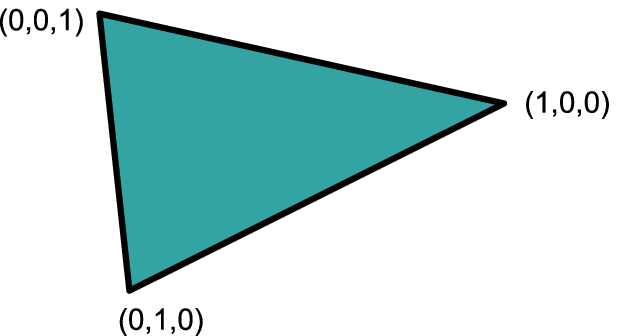}}
\fcaption{A $2$-simplex.\label{2simplexfigure}}
\end{figure}

\begin{definition}
Let $N$ be a positive integer.  A \textbf{simplicial complex} in $\mathbb{R}^N$ is a set
$S$ of simplicies in $\mathbb{R}^N$ which satisfies the following two conditions.
\begin{enumerate}
\item If $V$ is a simplex that is contained in $S$, then all faces of $V$ are also
contained in $S$.
\item If $V$ and $W$ are simplicies in $S$ such that $V \cap W \neq \emptyset$,
then $V \cap W$ is a face of both $V$ and $W$.
\end{enumerate}
\end{definition}

An example of a simplicial complex in $\mathbb{R}^2$ is shown in
Figure~\ref{2dimcomplexfig}.

\begin{figure}[!t]
 \centerline{\includegraphics{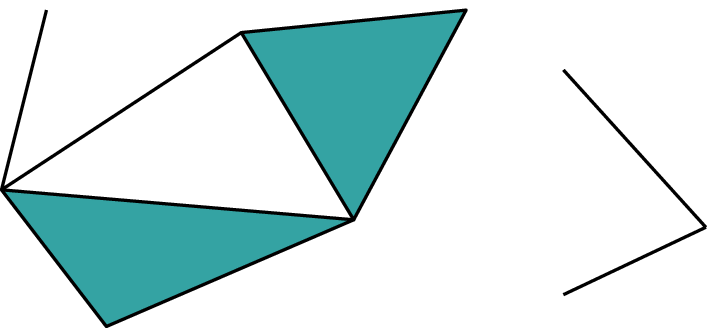}}
\fcaption{A simplicial complex in $\mathbb{R}^2$.\label{2dimcomplexfig}}
\end{figure}

Now, as mentioned earlier, there is another definition of simplicial complexes which
simply describes them as collections of finite sets.  Following \cite{munkres}, we will
use the term ``abstract simplicial complex'' to distinguish this definition from the
previous one.

\begin{definition}
An \textbf{abstract simplicial complex} is a set $\Delta$ of finite
nonempty sets which satisfies the following condition:
\begin{itemize}
\item If a set $Q$ is an element of $\Delta$, then all nonempty subsets
of $Q$ must also be elements of $\Delta$.
\end{itemize}
\end{definition}

Given a simplicial complex $S$ in $\mathbb{R}^N$, one can obtain an abstract simplicial
complex as follows.  Let $T$ be the set of all points in $\mathbb{R}^N$ which occur as
$0$-simplicies in $S$.  Let $\Delta_S$ be the set of all subsets $T' \subseteq T$ which
span simplicies that are in $S$.  Then, $\Delta_S$ is an abstract simplicial complex. (In
a sense, $\Delta_S$ records the ``gluing information'' for the simplicial complex~$S$.)

It is also easy to perform a reverse construction. Suppose that $\Delta$ is an abstract
simplicial complex.  Let
\begin{eqnarray}
U = \bigcup_{Q \in \Delta} Q
\end{eqnarray}
be the union of all of the sets that are contained in $\Delta$.  Let $N = \left| U
\right|$. Simply choose a set $V \subseteq \mathbb{R}^N$ consisting of $N$ linearly
independent vectors, and choose a one-to-one map $r \colon U \to V$.  Every set in
$\Delta$ determines a simplex in $\mathbb{R}^N$ (via $r$), and the collection of all of
these simplicies is a simplicial complex.

We define some terminology for abstract simplicial complexes.
\begin{definition}
Let $\Delta$ be an abstract simplicial complex.  Then,
\begin{itemize}
\item A \textbf{simplex in $\Delta$} is simply an element
of $\Delta$.  The \textbf{dimension} of a simplex $Q \in \Delta$, denoted $\dim ( Q )$,
is the \hbox{quantity} \hbox{$(\left| Q \right| - 1)$}. An \textbf{$n$-simplex} in
$\Delta$ is an element of $\Delta$ of \hbox{dimension}~$n$.
\item If $Q, Q' \in \Delta$
and $Q' \subseteq Q$, then we say that $Q'$ is a \textbf{face} of $Q$.
\item The \textbf{vertex set of $\Delta$} is the set
\begin{eqnarray}
\bigcup_{Q \in \Delta} Q.
\end{eqnarray}
Elements of this set are called \textbf{vertices of $\Delta$}.
\end{itemize}
\end{definition}

Here is an initial example of how abstract simplicial complexes arise.  Let $F$ be a
finite set.  Let $\mathcal{P} ( F )$ denote the power set of $F$.  Let $t \colon
\mathcal{P} ( F ) \to \{ 0, 1 \}$ be a function which is ``monotone increasing,'' in the
sense that any pair of sets $(A, B)$ such that $A \subseteq B \subseteq F$ satisfies $t (
A ) \leq t ( B) $.  Then, the set
\begin{eqnarray}
\left\{ C \mid \emptyset \subset C \subseteq F \textnormal{ and } t ( C ) = 0 \right\}
\end{eqnarray}
is an abstract simplicial complex.

Thus, a monotone increasing function on a power set determines an abstract simplicial
complex.  This connection is the basis for what we will discuss next.

\section{Monotone Graph Properties}\label{graphpropertysimplicial}

Now we will establish a relationship between monotone graph properties and simplicial
complexes.  We also introduce a topological concept (``collapsibility'') which has an
important role in this relationship.

Let $V_0$ be a finite set.  Using notation from \textit{\nameref{graphpropsection}}, let
$\mf{G} ( V_0 )$ denote the set of all graphs that have vertex set $V_0$. The elements of
$\mf{G} ( V_0 )$ are thus pairs of the form $(V_0 , E)$, where $E$ can be any subset of
the set
\begin{eqnarray}\label{thesetofalledges}
\left\{ \{ v, w \} \mid v , w \in V_0 \right\}.
\end{eqnarray}

Let $h \colon \mf{G}  ( V_0 ) \to \{ 0, 1 \}$ be a monotone increasing function.  Then
the \textbf{abstract simplicial complex associated with $h$}, denoted
$\Delta_h$, is the set of all nonempty subsets $E$ of set (\ref{thesetofalledges}) such
that
\begin{eqnarray}
h \left( ( V_0, E ) \right) = 0.
\end{eqnarray}

\begin{example}
Consider the set $\mathbf{G} \left( \{ 0, 1, 2, 3 \} \right)$ of graphs on the vertex set
$\{0, 1, 2, 3 \}$.  Define functions
\begin{eqnarray}
h_1 \colon \mathbf{G} \left( \{ 0, 1, 2, 3 \} \right) \to \{ 0, 1 \}, \\
h_2 \colon \mathbf{G} \left( \{ 0, 1, 2, 3 \} \right) \to \{ 0, 1 \}
\end{eqnarray}
by
\begin{eqnarray}
h_1 (G) & = & \left\{ \begin{array}{@{}l@{\quad}l@{}}
1 & \textnormal{if $G$ has at least three edges,} \\
0 & \textnormal{otherwise} \end{array}
\right.
\end{eqnarray}
and
\begin{eqnarray}
h_2 (G) & = & \left\{ \begin{array}{@{}l@{\quad}l@{}}
1 & \textnormal{if vertex ``$2$'' has at least one edge in $G$, } \\
0 & \textnormal{otherwise.} \end{array}
\right.
\end{eqnarray}
Then the simplicial complexes for $h_1$ and $h_2$ are shown in
Figures~\ref{graphprop1fig} and \ref{graphprop2fig}.\footnote{Note: Ignore the apparent
intersections in the interior of the diagram for $h_1$.  Imagine that the lines in the
diagram only intersect at the labeled points $\{ 0, 1 \}, \{ 0, 2 \} , \{ 1, 2 \} , \{ 0,
3 \} , \{ 2 , 3\}$, and $\{ 1, 3\}$.  (To really draw this diagram accurately, we would
need three dimensions.)}
\end{example}

\begin{figure}
 \centerline{\includegraphics{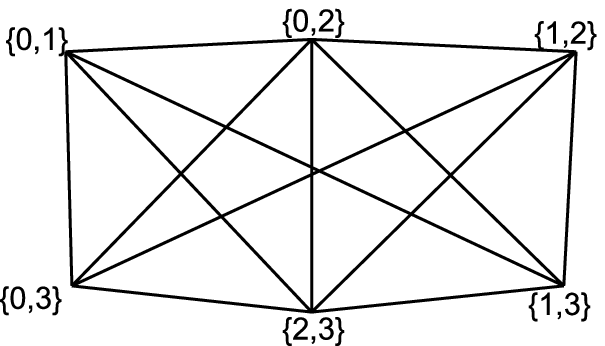}}
\fcaption{The simplicial complex of $h_1$.\label{graphprop1fig}}
\end{figure}

\begin{figure}
 \centerline{\includegraphics{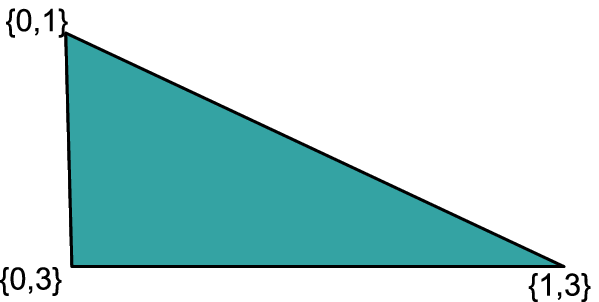}}
\fcaption{The simplicial complex of $h_2$.\label{graphprop2fig}}
\end{figure}

Thus we have a way of associating with any monotone-increasing graph
function
\begin{eqnarray}
h \colon \mathbf{G} ( V_0 ) \to \{ 0, 1 \}
\end{eqnarray}
an abstract simplicial complex $\Delta_h$.  The simplices of $\Delta_h$ correspond to
graphs on $V_0$.  The vertices of $\Delta_h$ correspond to \textit{edges} (not vertices!)
of graphs on $V_0$.

The association $[ h \mapsto \Delta_h ]$ is useful because it allows us to reinterpret
statements about graph functions in terms of simplicial complexes.  What we will do now
is to prove a theorem (for later use) which exploits this association.  The theorem
relates a condition on graph functions (``evasiveness,'' from
\textit{\nameref{graphpropsection}}) to a condition on simplicial complexes
(``collapsibility'').

We begin with some definitions.

\begin{definition}
Let $\Delta$ be an abstract simplicial complex, and let \hbox{$\alpha \in \Delta$} be a
simplex. Then $\Delta$ is a \textbf{maximal} simplex if it is not contained in any other
simplex in $\Delta$.
\end{definition}

\begin{definition}
Let $\Delta$ be an abstract simplicial complex, and let \hbox{$\beta \in \Delta$} be a
simplex. Then $\beta$ is called a \textbf{free face} of $\Delta$ if it is
\hbox{nonmaximal} and it is contained in only one maximal simplex in $\Delta$. If $\beta$
is a free face and $\alpha$ is the unique maximal simplex that contains it, then we will
say that \textbf{$\beta$ is a free face of $\alpha$}.
\end{definition}

\begin{definition}
An \textbf{elementary collapse} of an abstract simplicial complex is the operation of
choosing a single free face from the complex and deleting the face along with all the
faces that contain it.
\end{definition}

Here is an example of an elementary collapse: if
\begin{eqnarray}
\Sigma_1 = \left\{ \{ 0 \}, \{ 1 \} , \{ 2 \} , \{ 0, 1 \} , \{ 0, 2 \} , \{ 1, 2 \} , \{
0, 1, 2 \} \right\}\!,
\end{eqnarray}
then $\{0, 1\}$ is a free face of $\{0, 1, 2\}$ in $\Delta$.  By deleting the simplicies
$\{0,1\}$ and $\{0, 1, 2\}$, we obtain the complex
\begin{eqnarray}
\Sigma_2 = \left\{ \{ 0 \}, \{ 1 \} , \{ 2 \} , \{ 0, 2 \} , \{ 1, 2 \} \right\}\!.
\end{eqnarray}
The complex $\Sigma_2$ is an elementary collapse of the complex $\Sigma_1$. See
\hbox{Figure}~\ref{elementarycollapsefig}.

\begin{figure}[!b]
 \centerline{\includegraphics{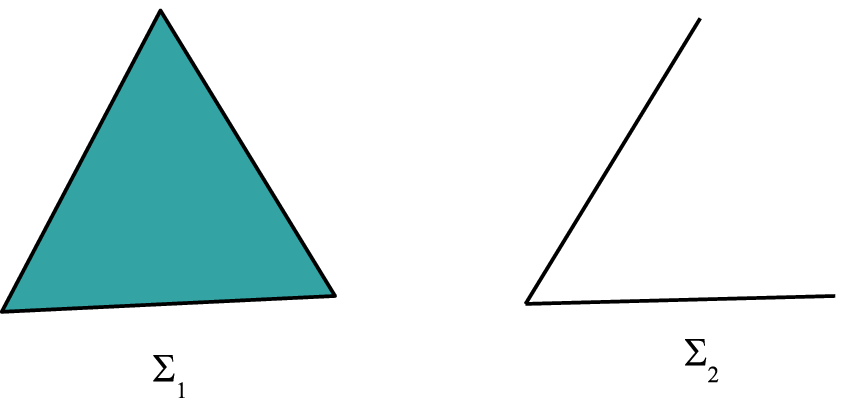}}
\fcaption{An elementary collapse.\label{elementarycollapsefig}}
\end{figure}

The previous example is an instance of what we will call a \textbf{primitive} elementary
collapse.  An elementary collapse is primitive if the free face that is deleted has
dimension one less than the maximal simplex in which it is contained.  In such a case,
the maximal simplex and free face itself are the only two simplices that are deleted.
(Not all elementary collapses are primitive.  An example of a nonprimitive elementary
collapse would be deleting all of the simplices $\{0\}$, $\{0,1\}$, $\{0,2\}$, and $\{0,
1, 2\}$ from $\Sigma_1$.)

\begin{definition}
Let $\Delta$ be an abstract simplicial complex.  Then $\Delta$ is
\textbf{collapsible} if there exists a sequence of elementary collapses
\begin{eqnarray}
\Delta , \Delta_1, \Delta_2 , \Delta_3 , \ldots, \Delta_n
\end{eqnarray}
such that $\left| \Delta_n \right| = 1$.
\end{definition}

In other words, $\Delta$ is collapsible if there exists a sequence of elementary
collapses which reduce $\Delta$ to a single $0$-simplex.

The abstract simplicial complexes $\Sigma_1$ and $\Sigma_2$ defined above are both
collapsible. An example of an abstract simplicial complex that is not collapsible is the
following:
\begin{eqnarray}
\Sigma = \left\{ \{ 0 \}, \{ 1 \} , \{ 2 \} , \{ 0, 1 \} , \{ 0, 2 \} , \{ 1, 2 \}
\right\}\!.
\end{eqnarray}
(This simplicial complex has no free faces, and therefore cannot be collapsed.)

The following theorem asserts that the simplicial complexes associated with
certain monotone-increasing graph functions are collapsible.  The theorem uses the
concept of ``evasiveness'' from \textit{\nameref{graphpropsection}}.

\begin{theorem}\label{collapsibilitytheorem}
Let $V_0$ be a finite set.  Let
\begin{eqnarray}
h \colon \mathbf{G} \left( V_0 \right) \to \{ 0 , 1 \}
\end{eqnarray}
be a monotone-increasing function which is not evasive.  If the complex $\Delta_h$ is not
empty, then it is collapsible.
\end{theorem}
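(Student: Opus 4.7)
The strategy is induction on the size of the edge set $E = \{\{i,j\} \mid i,j \in V_0,\ i \neq j\}$, after first generalizing beyond graph properties --- the inductive step restricts $h$ by fixing the status of individual edges, producing functions on subsets of $E$ that are no longer isomorphism-invariant. I would prove the following generalization: for any finite set $E$ and any monotone-increasing $g\colon \mathcal{P}(E) \to \{0,1\}$ whose decision-tree complexity is strictly less than $|E|$, if $\Delta_g := \{F \subseteq E \mid F \neq \emptyset \textnormal{ and } g(F) = 0\}$ is nonempty, then $\Delta_g$ is collapsible. Theorem~\ref{collapsibilitytheorem} follows by taking $g(F) = h((V_0,F))$.

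For the inductive step (the base cases $|E| \leq 1$ are direct), let $T$ be a decision tree for $g$ of depth $d < |E|$ and let $e$ be the edge queried at the root. The two subtrees $T_0$ and $T_1$ compute the monotone-increasing functions $g_0, g_1\colon \mathcal{P}(E \setminus \{e\}) \to \{0,1\}$ given by $g_0(F) = g(F)$ and $g_1(F) = g(F \cup \{e\})$ in depth at most $d - 1 < |E| - 1$, so $g_0$ and $g_1$ are both non-evasive on a ground set of size $|E| - 1$. Monotonicity gives $g_0 \leq g_1$ and hence $\Delta_{g_1} \subseteq \Delta_{g_0}$, while $\Delta_g$ decomposes as the subcomplex $\Delta_{g_0}$ (the simplices not containing $e$) together with the simplices containing $e$ --- which, when $g(\{e\}) = 0$, form a cone with apex $e$ over $\Delta_{g_1}$, glued to $\Delta_{g_0}$ along $\Delta_{g_1}$. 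If $g(\{e\}) = 1$ the cone part is empty, so $\Delta_g = \Delta_{g_0}$ and the inductive hypothesis finishes the job. If $g(\{e\}) = 0$ but $\Delta_{g_1}$ is empty, then $g_1(\emptyset) = 0$ and $g_1 \equiv 1$ on nonempty sets, so $g_1$ is the OR function on $E \setminus \{e\}$, whose decision-tree complexity is $|E| - 1$ --- contradicting the existence of the subtree $T_1$ of depth $<|E|-1$. In the remaining case both $\Delta_{g_0}$ and $\Delta_{g_1}$ are nonempty, and each is collapsible by the inductive hypothesis.

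To finish, take a sequence of primitive elementary collapses reducing $\Delta_{g_1}$ to a single vertex $\{w\}$, and lift it to $\Delta_g$ by replacing each step ``collapse the free face $\sigma$ of its unique maximal superface $\tau$'' with the step ``collapse $\sigma \cup \{e\}$ together with $\tau \cup \{e\}$ in $\Delta_g$''. After the lifted sequence the only simplices of $\Delta_g$ containing $e$ are $\{e\}$ and $\{e, w\}$; one final primitive collapse removes this pair, leaving $\Delta_{g_0}$ alone, which the inductive hypothesis reduces to a point. The main obstacle --- where the argument needs genuine care --- is verifying at each step that $\sigma \cup \{e\}$ really is a free face of $\tau \cup \{e\}$ in the full complex $\Delta_g$ and not merely in the isolated cone. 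Any simplex of $\Delta_g$ strictly containing $\sigma \cup \{e\}$ must contain $e$, so its $e$-removed part lies in the current state of $\Delta_{g_1}$ and strictly contains $\sigma$; then uniqueness of $\tau$ upstairs forces uniqueness of $\tau \cup \{e\}$ downstairs. Moreover, these lifted collapses never disturb any simplex of $\Delta_{g_0}$ (they only remove $e$-containing simplices), which is precisely what makes the final appeal to induction on $\Delta_{g_0}$ legitimate.
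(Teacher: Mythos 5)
Your proof is correct, and it takes a genuinely different route from the paper's. The paper first pads the decision tree to uniform height $\binom{n}{2} - 1$ with no repeated queries along any root-to-leaf path, and then reads off the collapse sequence directly from the left-to-right ordering of the leaves: the leftmost surviving ``$0$''-leaf always names a primitive free pair. You instead argue by induction on $|E|$, peeling off the root query $e$ and decomposing $\Delta_g$ into $\Delta_{g_0}$ with the cone $e \star \Delta_{g_1}$ glued in along $\Delta_{g_1}$; collapsing the cone via the lifted primitive collapses never touches $\Delta_{g_0}$, so the recursion closes cleanly. The subtle point --- that $\sigma \cup \{e\}$ really is free in the whole current complex, not merely inside the cone --- is exactly where such an argument could break, and you verify it correctly: any simplex strictly above $\sigma \cup \{e\}$ must contain $e$, hence projects to a proper superface of $\sigma$ in the current state of $\Delta_{g_1}$, so freeness upstairs forces freeness downstairs. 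Unwinding the recursion reproduces roughly the paper's collapse order (a depth-first sweep visiting the ``$Y$'' branch, then the root edge, then the ``$N$'' branch), but your account is more modular: it abstracts to arbitrary monotone Boolean functions (making it plain that no graph-theoretic structure is used here, a fact the paper relies on tacitly), it dispenses with the tree-padding normalization, and it isolates the free-face verification as an explicit inductive invariant rather than leaving it to be ``deduced from the ordering of the leaves.'' What the paper's version buys in exchange is a crisper global picture of the collapse as a single sweep across the leaves of a fixed tree, with somewhat less bookkeeping.
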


\begin{proof}
The theorem has an elegant visual proof.  Essentially, what we do is to construct a
decision-tree for $h$ and then read off a collapsing-procedure for $\Delta_h$ from the
decision-tree.\footnote{Thanks to Yaoyun Shi, who suggested the nice visualization that
appears in this proof.}

\begin{figure}
 \centerline{\includegraphics{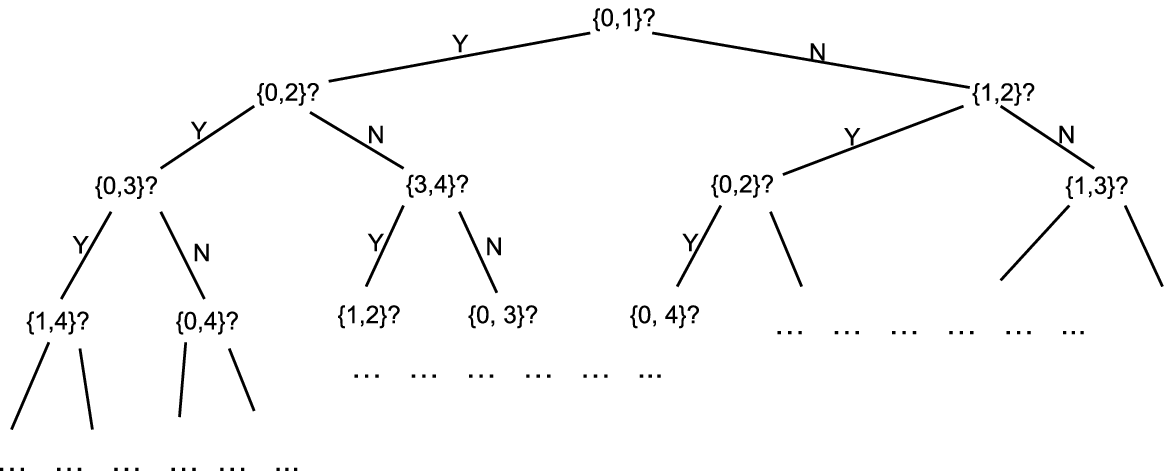}}
\fcaption{A decision tree.\label{adecisiontreefig}}
\end{figure}

Let $n = |V_0|$. Since we have assumed that the function $h$ is not evasive, there must
exist a decision tree of depth smaller than \hbox{$n(n-1)/2$} which decides $h$.  Let $T$
be such a tree.  (See Figure~\ref{adecisiontreefig}.) By modifying $T$ if necessary, we
can produce another decision-tree $T'$ which decides $h$ and which satisfies the
following conditions. (See Figure~\ref{decisiontreeTprimefig}.)
\begin{itemize}
\item The paths in $T'$ do not have repeated edges.  (That is,
no edge $\{ i , j \}$ appears more than once on any path in $T'$.)
\item Every path in $T'$ has length exactly $[n(n-1)/2 - 1]$.
\end{itemize}

\begin{figure}
 \centerline{\includegraphics{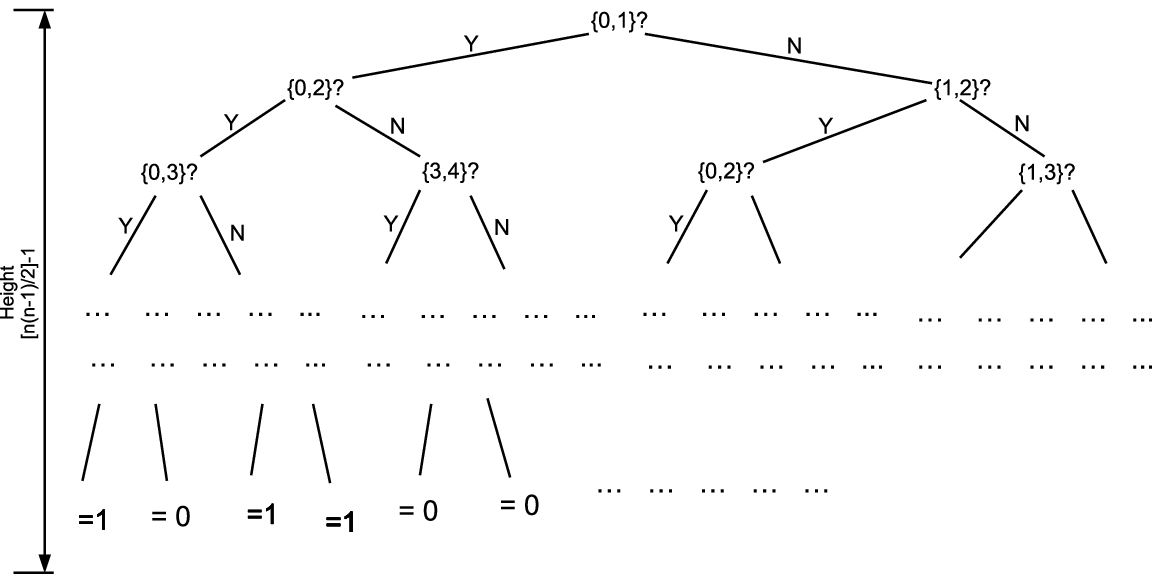}}
\fcaption{A decision tree of uniform height.\label{decisiontreeTprimefig}}
\end{figure}

We can define a natural total ordering on the leaves of tree $T'$. The ordering is
defined by asserting that for any parent-node in the tree, all leaves that can be reached
through the ``Y'' branch of the node are smaller than all the leaves that can be reached
through the ``N'' branch of the node.  Since any two leaves share a common ancestor, this
rule gives a total ordering.

For any leaf of tree $T'$, there are exactly two graphs which would cause
the leaf to be reached during computation.  Thus there is a
one-to-two correspondence between leaves of $T'$ and graphs
on $V_0$.  An example is shown in Figure~\ref{decisiontreedepth2fig}. Note that each leaf
is labeled with either with a ``$1$'' or a ``$0$'', depending on the value taken by the
function $h$ at the corresponding graphs. The simplicial complex $\Delta_h$ is composed
out of the graphs that appear at the ``$0$''-leaves of the tree.

\begin{figure}[!t]
 \centerline{\includegraphics{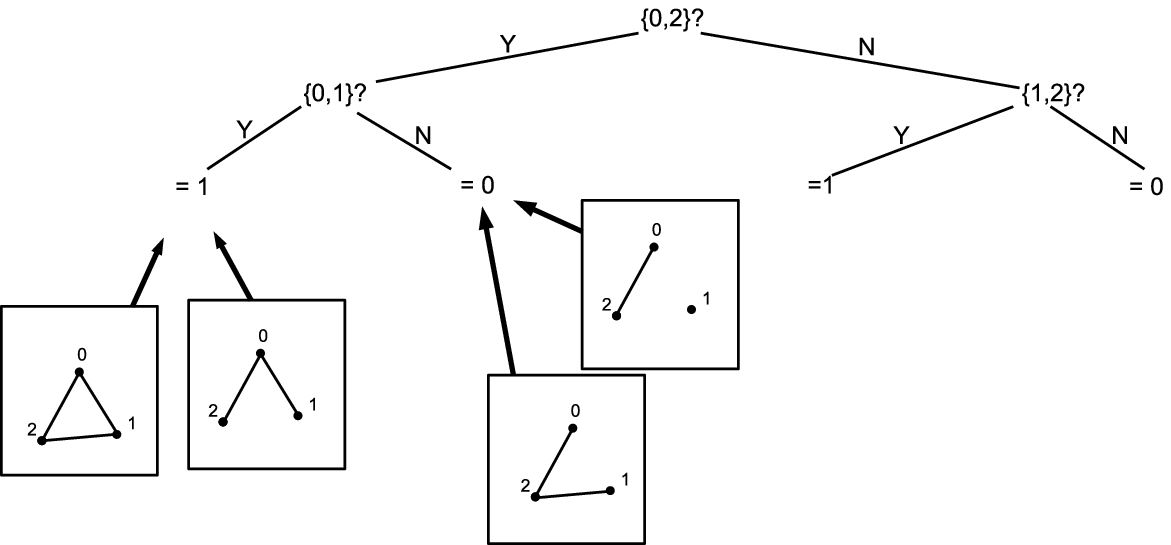}}
\fcaption{A decision tree of height $2$ for graphs of size
$3$.\label{decisiontreedepth2fig}}
\end{figure}

The ordering of the leaves of $T'$ provides a recipe for collapsing $\Delta_h$.  Simply
find the smallest (i.e., leftmost) ``$0$''-leaf that appears in tree $T'$.  This leaf
corresponds to a pair of simplices $\gamma_1, \gamma_2 \in \Delta_h$ with $\gamma_1
\subseteq \gamma_2$.  From the ordering of the leaves, we can deduce that $\gamma_1$ and
$\gamma_2$ are not contained in any simplices in $\Delta_h$ other than themselves.  Thus
$\gamma_1$ is a free face of $\Delta_h$.  We can therefore perform an elementary
collapse: let
\begin{eqnarray}
\Delta_1 = \Delta_h \smallsetminus \{ \gamma_1 , \gamma_2 \}.
\end{eqnarray}
Now find the second smallest $0$-leaf that appears in $T'$.  This leaf corresponds to
another pair of simplices $\gamma'_1, \gamma'_2 \in \Delta_h$ which are not contained in
any other simplices in $\Delta_h$, except possibly $\gamma_1$ or $\gamma_2$.  Perform
another elementary collapse:
\begin{eqnarray}
\Delta_2 = \Delta_1 \smallsetminus \{ \gamma'_1 , \gamma'_2 \}.
\end{eqnarray}

Continuing in this manner, we can obtain a sequence of elementary collapses
\begin{eqnarray}
\Delta_h, \Delta_1 , \Delta_2 , \Delta_3 , \ldots , \Delta_n
\end{eqnarray}
such that $\left| \Delta_n \right| = 1$.  Therefore, $\Delta_h$ is collapsible.
\end{proof}

\section{Group Actions on Simplicial Complexes}\label{groupactionsection}

Now we define the notion of a \textbf{simplicial isomorphism} between abstract simplicial
complexes.  This is a case of the more general notion of a simplicial map (see
\cite{munkres}).

\begin{definition}
Let $\Delta$ and $\Delta'$ be abstract simplicial complexes.  A simplicial isomorphism
from $\Delta$ to $\Delta'$ is a bijective map
\begin{eqnarray}
f \colon \Delta \to \Delta'
\end{eqnarray}
which is such that for any $Q_1, Q_2 \in \Delta$,
\begin{eqnarray}
Q_1 \subseteq Q_2 \hskip0.2in \Longleftrightarrow \hskip0.2in
f ( Q_1 ) \subseteq f (Q_2 ).
\end{eqnarray}
\end{definition}

In other words, a simplicial isomorphism between two abstract complexes $\Delta$,
$\Delta'$ is a one-to-one matching $f$ between the simplicies of~$\Delta$ and~$\Delta'$
which respects inclusion.  We note the following assertions, which can be proven easily
from this definition:
\begin{itemize}
\item If $f \colon \Delta \to \Delta'$ is a simplicial isomorphism, then
$f$ respects dimension (i.e., if $Q \in \Delta$ is an $n$-simplex, then
$f(Q)$ must be an $n$-simplex).
\item If $f \colon \Delta \to \Delta'$ is a simplicial isomorphism, then there is an associated map of
vertex sets
\begin{eqnarray}
\hat{f} \colon \bigcup_{Q \in \Delta} Q \to
\bigcup_{Q' \in \Delta'} Q'
\end{eqnarray}
defined by $f ( \{ v \} ) = \{ \hat{f} ( v ) \}$.  (Let us call this the \textbf{vertex
map} of $f$.)  The map $\hat{f}$ uniquely determines $f$.
\end{itemize}

Let $\Delta$ be an abstract simplicial complex. A simplicial automorphism of $\Delta$ can
be specified either as an inclusion preserving permutation of the elements of $\Delta$,
or simply as a permutation
\begin{eqnarray}
b \colon \bigcup_{Q \in \Delta} Q \to  \bigcup_{Q \in \Delta} Q
\end{eqnarray}
of the vertex set of $\Delta$ satisfying
\begin{eqnarray}
Q \in \Delta \Longrightarrow b ( Q ) \in \Delta.
\end{eqnarray}
When we speak of a \textbf{group action} $G \circlearrowleft \Delta$, we mean an action
of a group~$G$ on $\Delta$ by simplicial automorphisms.

In \textit{\nameref{fptchapter}} we will be concerned with determining the ``fixed
points'' of a group action on an abstract simplicial complex. As we will see, describing
this set requires some care. One could simply take the set $\Delta^G$ of $G$-invariant
simplices.  But this set is not always subcomplex of $\Delta$.  Consider the
two-dimensional complex $\Sigma$ in Figure~\ref{groupactionfig}, which
consists of the sets $\{ 0, 1, 2 \}$ and $\{ 0, 2, 3 \}$ and all of their proper nonempty
subsets. If we let $f \colon \Sigma \to \Sigma$ be the simplicial automorphism which
transposes $\{ 1 \}$ and $\{3 \}$ and leaves $\{ 0 \}$ and $\{ 2 \}$ fixed, then
$\Sigma^f$ is a subcomplex of $\Sigma$.  However, if we let $h \colon \Sigma \to \Sigma$
be the simplicial automorphism which transposes $\{ 0 \}$ and $\{ 2 \}$ and leaves $\{ 1
\}$ and $\{ 3 \}$ fixed, then $\Delta^h$ is not a subcomplex of $\Sigma$, since it
contains the set $\{ 0, 2 \}$ but does not contain its subsets $\{ 0 \}$ and $\{ 2 \}$.

\begin{figure}[!b]
 \centerline{\includegraphics[scale=1.03]{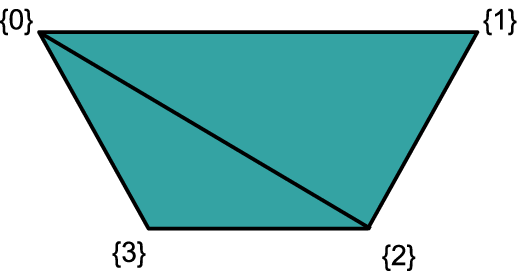}}
\fcaption{The complex $\Sigma$.\label{groupactionfig}}
\end{figure}

It is helpful to look at group actions on abstract simplicial \hbox{complexes} in terms
of the geometric representation introduced in
\textit{\nameref{simplicialcomplexsection}}. Let $\mf{e}_0, \mf{e}_1, \ldots, \mf{e}_n$
be the standard basis vectors in $\mathbb{R}^{n+1}$.  These vectors span an $n$-simplex
\begin{eqnarray}
\delta = \left\{ \sum_{i=0}^n c_i \mf{v}_i \mid 0 \leq c_i \leq 1 , \sum_{i=0}^n c_i = 1
\right\}\!.
\end{eqnarray}
If $f \colon \{ 0, 1, \ldots, n \} \to \{ 0, 1, \ldots, n \}$ is a permutation with
orbits $B_1, \ldots, B_m \subseteq \{ 0, 1, \ldots, n \}$, then $f$ induces a bijective
map on $\delta$.  The invariant set $\delta^f$ consists of those linear combinations
$\sum c_i \mf{v}_i$ satisfying the condition that $c_i = c_j$ whenever $i$ and $j$ lie in
the same orbit.  The set $\delta^f$ is an $(m-1)$-simplex which is spanned by the vectors
\begin{eqnarray}
\left\{ \frac{ \sum_{i \in B_k } \mf{v}_i }{\left| B_k \right|} \mid k = 1, 2, \ldots, m
\right\}\!.
\end{eqnarray}
This motivates the following definition.

\enlargethispage{4pt}

\begin{definition}
Let $\Delta$ be a finite abstract simplicial complex with vertex set $V$, and let $G
\circlearrowleft \Delta$ be a group action.  Let $A_1, \ldots, A_m \subseteq V$ denote
the orbits of the action of $G$ on $V$.  Then, let $\Delta^{[G]}$ denote the set of all
subsets $T \subseteq \{ A_1, \ldots , A_m \}$ satisfying
\begin{eqnarray}
\bigcup_{S \in T} S \in \Delta.
\end{eqnarray}
\end{definition}

\noindent It is easy to see that the set $\Delta^{[G]}$ is always a simplicial complex.
In the case of the complex $\Sigma$ from Figure~\ref{groupactionfig}, if we let $H$ be
the group generated by the automorphism $h$ which transposes $\{ 0 \}$ and $\{ 2 \}$, the
complex $\Sigma^{[H]}$ is one-dimensional and consists of
three zero simplices and two one-simplices.  (See
Figure~\ref{groupaction2fig}.) The vertices of $\Sigma^{[H]}$ are the orbits $\{ 1 \}$,
$\{ 3 \}$, and $\{ 0, 2 \}$.

\begin{figure}[!b]
 \centerline{\includegraphics{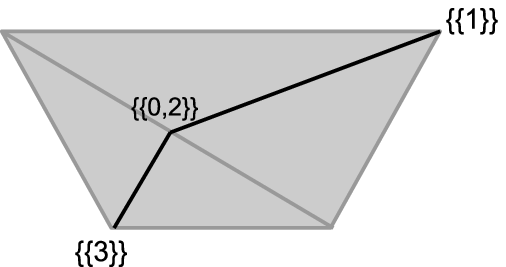}}
\fcaption{The complex $\Sigma^{[H]}$.\label{groupaction2fig}}
 \vspace*{-3pt}
\end{figure}

This complex $\Delta^{[G]}$ will be important in \textit{\nameref{fptchapter}}.

\chapter{Chain Complexes}\label{chaincomplexchapter}

 \vspace*{-12pt}

In this part of the text we will introduce some algebraic objects which are crucial for
measuring the behavior of simplicial complexes.  The \hbox{central} objects of concern
are \textbf{chain complexes} and \textbf{homology groups}. We will define these objects
and develop some important tools for dealing with them.

\section{Definition of Chain Complexes}\label{chaincomplexsection}

A \textbf{complex of abelian groups} is a sequence of abelian groups
\begin{eqnarray}
Z_0 , Z_1, Z_2, \ldots
\end{eqnarray}
together with group homomorphisms $d_i \colon Z_i \to Z_{i-1}$ for each $i > 0$,
\hbox{satisfying} the condition
\begin{eqnarray}
d_{i-1} \circ d_i = 0
\end{eqnarray}
(or equivalently, $\im d_i \subseteq \ker d_{i-1}$). The groups $Z_i$ and the maps $d_i$
are often expressed in a diagram like so:
\begin{eqnarray}
\xymatrix{\cdots \ar[r] & Z_3 \ar[r]^{d_3}
& Z_2 \ar[r]^{d_2}
& Z_1 \ar[r]^{d_1}
& Z_0}
\end{eqnarray}
We abbreviate the complex as $Z_\bullet$.

A chain complex is a particular complex of abelian groups that is obtained from a
simplicial complex. The definition of chain complex that we will use requires first
choosing a total ordering of the vertices of the abstract simplicial complex in question.
If the vertices of the abstract simplicial complex happen to be elements of a totally
ordered set (such as the set of integers), then our choice is already made for us.
Otherwise, it is necessary before applying our definition to specify what ordering of
vertices we are using.  The particular choice of ordering is not terribly important, but
it must be made consistently.

We introduce some new notation which takes this ordering issue into account.

\begin{notation}
Let $V$ be a totally ordered set, and let $\Delta$ be an abstract simplicial complex
whose vertices are all elements of $V$.  For any sequence of distinct elements $v_0, v_1,
\ldots, v_n \in V$ such that
\begin{eqnarray}
\left\{ v_0, \ldots , v_n \right\} \in \Delta
\end{eqnarray}
and
\begin{eqnarray}
v_0 < v_1 < v_2 < \ldots < v_n,
\end{eqnarray}
let
\begin{eqnarray}
[v_0, v_1, \ldots , v_n]
\end{eqnarray}
denote the $n$-simplex $\left\{ v_0, \ldots , v_n \right\}$ in $\Delta$.
\end{notation}

\noindent This notation allows us to cleanly handle the ordering on the vertices of an
abstract simplicial complex.  Note that if we say, ``$[ v_0, v_1, \ldots , v_n ]$ is a
simplex in $\Delta$'', we are implying both that $\{ v_0 , \ldots , v_n \}$ is an
element of $\Delta$ \textit{and} that the sequence $v_0, v_1, \ldots, v_n$ is in
ascending order.

Now we will define the sequence of groups which make up a chain complex.

\begin{definition}
Let $V$ be a totally ordered set, and let $\Delta$ be an abstract simplicial complex
whose vertices are elements of $V$.  Let $n$ be a nonnegative integer. Then, the
\textbf{$n$th chain group of $V$ over $\mathbb{R}$}, denoted $K_n ( \Delta, \mathbb{R}
)$, is the set of all formal $\mathbb{R}$-linear combinations of $n$-simplices in
$\Delta$.
\end{definition}

\begin{example}\label{triangleexample}
Let $\Sigma$ be the simplicial complex
\begin{eqnarray}
\Sigma = \left\{ \{ 0 \}, \{ 1 \} , \{ 2 \} , \{ 0, 1 \},
\{ 1, 2 \}, \{ 0, 2 \} \right\}.
\end{eqnarray}
Then, $\Sigma$ has three zero-simplices ($[0]$, $[1]$, and $[2]$) and three one-simplices
($[0,1]$, $[1,2]$, and $[0,2]$).  The chain group $K_0 \left( \Sigma, \mathbb{R} \right)$
is a three-dimensional real vector space, and its elements can be expressed
in the form
\begin{eqnarray}
r_1  [ 0 ] + r_2 [ 1 ]  + r_3 [2],
\end{eqnarray}
where $r_1$, $r_2$, and $r_3$ denote real numbers.  The chain group $K_1 \left( \Sigma,
\mathbb{R} \right)$ is a three-dimensional real vector space, and its
elements can be expressed in the form
\begin{eqnarray}
r_4  [0, 1] + r_5  [1, 2] + r_6  [0, 2 ],
\end{eqnarray}
where $r_4$, $r_5$, and $r_6$ denote real numbers.
\end{example}

In general, if $\Delta$ is an abstract simplicial complex, then $K_n \left( \Delta,
\mathbb{R} \right)$ is a real vector space whose dimension is equal to the number
$n$-simplicies in $\Delta$.  (If $\Delta$ has no $n$-simplicies, then $K_n \left( \Delta
, \mathbb{R} \right)$ is a zero vector space.)

\begin{definition}\label{boundarymapdef}
Let $V$ be a totally ordered set, and let $\Delta$ be an abstract simplicial complex
whose vertices are elements of $V$.  Let $n$ be a positive integer.  Then the
\textbf{boundary map} on the $n$th chain group of $\Delta$ (over $\mathbb{R}$) is the
unique $\mathbb{R}$-linear homomorphism
\begin{eqnarray}
d_n \colon K_n \left( \Delta , \mathbb{R} \right)
\to K_{n-1} \left( \Delta , \mathbb{R} \right)
\end{eqnarray}
defined by the equations
\begin{eqnarray}
\label{boundarymapdefeqn}
d_n \left( [ v_0, v_1, \ldots, v_n ] \right) =
\sum_{i = 0}^n (-1)^i [ v_0, v_1, \ldots, v_{i-1} , v_{i+1} ,
\ldots, v_n ]
\end{eqnarray}
(where $[v_0, v_1, \ldots, v_n]$ can be taken to be any $n$-simplex in $\Delta$.)
\end{definition}

\begin{example}\label{solidtriangleexample}
Let
\begin{eqnarray}
\Sigma' = \left\{ \{ 0 \} , \{ 1 \}, \{ 2 \} , \{ 0, 1 \} , \{ 1, 2 \} , \{ 0, 2 \} , \{
0, 1, 2 \} \right\}\!.
\end{eqnarray}
Then the boundary map
\begin{eqnarray}
d_2 \colon K_2 \left( \Sigma' , \mathbb{R} \right)
\to K_1 \left( \Sigma' , \mathbb{R} \right)
\end{eqnarray}
is defined by the equation
\begin{eqnarray}
d_2 \left( [ 0, 1, 2 ] \right) & = & [1, 2] - [0, 2] + [0, 1].
\end{eqnarray}
The boundary map
\begin{eqnarray}
d_1 \colon K_1 \left( \Sigma' , \mathbb{R} \right)
\to K_0 \left( \Sigma' , \mathbb{R} \right)
\end{eqnarray}
is defined by the equations
\begin{eqnarray}
d_1 \left( [ 0, 1 ] \right) & = & [0] - [1] \\
d_1 \left( [ 0, 2 ] \right) & = & [0] - [2] \\
d_1 \left( [ 1, 2 ] \right) & = & [1] - [2].
\end{eqnarray}
\end{example}


Note that in equation~(\ref{boundarymapdefeqn}), the simplicies that appear on the right
side are precisely the $(n-1)$-simplex faces of the simplex $[v_0, v_1, \ldots, v_n]$.
Geometrically, if $U \subseteq \mathbb{R}^N$ is an $n$-simplex, then the codimension-$1$
faces of $U$ make up the boundary (or exterior) of the set $U$.  This gives us an idea of
why $d_n$ is called a ``boundary'' map.

\begin{proposition}\label{doublezeroprop}
Let $\Delta$ be an abstract simplicial complex whose vertices are totally ordered.  Let
$n$ be an integer such that $n \geq 2$.  Then the map
\begin{eqnarray}
d_{n-1} \circ d_{n} \colon K_n \left( \Delta, \mathbb{R} \right)
\to K_{n-2} \left( \Delta, \mathbb{R} \right)
\end{eqnarray}
is the zero map.
\end{proposition}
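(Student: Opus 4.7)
The plan is a direct computation: expand $d_{n-1}\circ d_n$ on an arbitrary basis simplex $[v_0,v_1,\ldots,v_n]$ of $K_n(\Delta,\mathbb{R})$ using the defining formula~(\ref{boundarymapdefeqn}) twice, and show that the resulting double sum cancels term by term. Since $d_{n-1}\circ d_n$ is $\mathbb{R}$-linear and its values on basis simplices determine it completely, showing that $(d_{n-1}\circ d_n)([v_0,\ldots,v_n]) = 0$ for every $n$-simplex suffices.

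Applying $d_n$ first yields
\begin{eqnarray}
d_n([v_0,\ldots,v_n]) = \sum_{i=0}^n (-1)^i [v_0,\ldots,\widehat{v_i},\ldots,v_n],
\nonumber
\end{eqnarray}
where the hat indicates an omitted entry. Then applying $d_{n-1}$ to each summand produces a sum over pairs of indices. The key bookkeeping step is to track the positions of remaining vertices carefully: when we remove $v_i$ first and then remove some $v_j$ with $j<i$, the vertex $v_j$ still sits in position $j$ of the shortened list, so that term carries sign $(-1)^i(-1)^j$. When instead we remove $v_j$ first and then $v_i$ with $j<i$, the vertex $v_i$ has shifted down to position $i-1$, so the term carries sign $(-1)^j(-1)^{i-1}$. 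Both operations produce the identical face $[v_0,\ldots,\widehat{v_j},\ldots,\widehat{v_i},\ldots,v_n]$, but the two signs differ by a factor of $-1$, so the two copies cancel.

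More explicitly, I would split the double sum into the $j<i$ part and the $j\geq i$ part (after the first removal, indices run from $0$ to $n-1$, but it is cleaner to re-index so that both coordinates refer to the original labels $v_0,\ldots,v_n$). Grouping by the pair $\{i,j\}$ of omitted vertices, each pair contributes exactly two terms with opposite signs, yielding zero.

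The step requiring the most care is the index shift in the second application of the boundary map: it is easy to lose the $-1$ in the exponent when $j < i$. I would handle this by writing out the two cases $j<i$ and $j>i$ separately, making the re-indexing explicit, and then matching the $(j,i)$ term of one case against the $(i-1,j)$ term of the other to display the cancellation. No topological input is needed beyond Definition~\ref{boundarymapdef} itself; the identity is purely combinatorial.
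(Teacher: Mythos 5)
Your proposal is correct and matches the paper's own proof: both expand $d_{n-1}\circ d_n$ on a basis $n$-simplex, split the resulting double sum into the $j<i$ and $j>i$ cases, and observe that the index shift produces a relative sign of $-1$ so that each unordered pair of omitted vertices contributes two cancelling terms. The paper records the signs as $(-1)^{i+j}$ and $(-1)^{i+j-1}$ in its displayed double sum, which is exactly the bookkeeping you describe.
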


\begin{proof}
Let $Q = [v_0, v_1, \ldots , v_n]$ be an $n$-simplex in $\Delta$.  Then,
applying Definition (\ref{boundarymapdef}) twice, we find
\begin{eqnarray*}
&&d_{n-1} \left( d_n \left( Q \right) \right)\\
& &\qquad =  \sum_{i=0}^n d_{n-1} \left( (-1)^i [ v_0, \ldots, v_{i-1} , v_{i+1} ,
\ldots v_n ] \right) \\
&&\qquad =  \sum_{i=0}^n \left( \sum_{j=0}^{i-1} (-1)^{i+j} [v_0, \ldots , v_{j-1} ,
v_{j+1} , \ldots
v_{i-1}, v_{i+1}, \ldots , v_n ] \right. \\
&&\qquad \quad +\! \left. \sum_{j=i+1}^n (-1)^{i+j-1} [v_0, \ldots , v_{i-1}, v_{i+1} ,
\ldots , v_{j-1}, v_{j+1}, \ldots, v_n ]\!\!\right)\!.
\end{eqnarray*}
All terms in this double-summation cancel, and thus we find that
\begin{eqnarray}
d_{n-1} ( d_n ( Q ) ) = 0.
\end{eqnarray}
Therefore by linearity, $d_{n-1} \circ d_n$ is the zero map.
\end{proof}

\noindent If $\Delta$ is an abstract simplicial complex with ordered vertices, then the
\textbf{chain complex of $\Delta$ over $\mathbb{R}$} is the set of $\mathbb{R}$-chain
groups of $\Delta$ together with their boundary maps:
\begin{eqnarray}
\xymatrix{\ldots \ar[r] & K_2 \left( \Delta, \mathbb{R} \right)
\ar[r]^{d_2} & K_1 \left( \Delta , \mathbb{R} \right) \ar[r]^{d_1} &
K_0 \left( \Delta , \mathbb{R} \right) \ar[r]^{d_0} & 0}
\end{eqnarray}

For any $n$, the \textbf{$n$th homology group} of $\Delta$ is defined by
\begin{eqnarray}
H_n \left( \Delta , \mathbb{R} \right) = (\ker d_n)/(\im d_{n+1})\!,
\end{eqnarray}
Consider the complex $\Sigma$ from Example~\ref{triangleexample}. The kernel of $d_0$ is
the entire space $K_0 ( \Delta , \mathbb{R} )$, while the image of $d_1$ is the set of
all linear combinations $r_1 [0 ] + r_2 [1 ] + r_3 [2]$ which are such that \hbox{$r_1 +
r_2 + r_3 = 0$}. The quotient $H_0 ( \Delta , \mathbb{R} ) = \ker d_0 / \im d_1$ is a
one-dimensional real \hbox{vector} space.  The homology group $H_1 ( \Delta , \mathbb{R}
) = \ker d_1  / \{ 0 \}$ is also a one-dimensional real vector space, spanned by the
element $[0, 1] - [0, 2] + [1, 2]$.  All other homology groups of $\Sigma$ are
zero-dimensional.

As we will see in \textit{\nameref{picturingsection}}, the homology groups are
interesting because they supply structural information about the complex $\Delta$.  As an
initial example, the reader is invited to prove the \hbox{following} fact as an
exercise:\vadjust{\pagebreak} for any finite abstract simplicial \hbox{complex}~$\Delta$,
the dimension of $H_0 ( \Delta , \mathbb{R} )$ is equal to the number of connected
components of $\Delta$.

Although we defined chain groups using $\mathbb{R}$ (the set of real numbers), it is
possible to define them using other algebraic structures in place of $\mathbb{R}$. Here
is a definition for chain groups over $\mathbb{F}_p$.  Proposition~\ref{doublezeroprop}
and the definition of homology groups carry over immediately to this case.

\begin{definition}
Let $V$ be a totally ordered set, and let $\Delta$ be an abstract simplicial complex
whose vertices are elements of $V$.  Then $K_n \left( \Delta , \mathbb{F}_p \right)$
denotes the vector space of formal $\mathbb{F}_p$-linear combinations of $n$-simplicies
in $V$. For each $n \geq 1$, the map
\begin{eqnarray}
d_n \colon K_n \left( \Delta , \mathbb{F}_p \right) \to
K_{n-1} \left( \Delta , \mathbb{F}_p \right)
\end{eqnarray}
is the unique $\mathbb{F}_p$-linear map defined by
\begin{eqnarray}
d_n \left( [ v_0, v_1, \ldots, v_n ] \right) = \sum_{i = 0}^n (-1)^i [ v_0, v_1, \ldots,
v_{i-1} , v_{i+1} , \ldots , v_n].\\[-13pt]\nn
\end{eqnarray}
\end{definition}

For the rest of this exposition we will be focusing on homology groups with coefficients
in $\mathbb{F}_p$, since these will eventually be the basis for our proofs of fixed-point
theorems. Much of what we will do in this text with $\mathbb{F}_p$-homology could be done
just as well with $\mathbb{R}$-homology, but there will be a key result
(Proposition~\ref{acyclicitypreservation}) which depends critically on the fact that we
are using coefficients in $\mathbb{F}_p$.

\section{Chain Complexes and Simplicial Isomorphisms}\label{chainmapsection}

Suppose that
\begin{eqnarray}
\xymatrix{\ldots
\ar[r] & I_{n+1} \ar[r]^{d_{n+1}}
& I_n \ar[r]^{d_n} & I_{n-1} \ar[r]^{d_{n-1}} & \ldots}
\end{eqnarray}
and
\begin{eqnarray}
\xymatrix{\ldots
\ar[r] & J_{n+1} \ar[r]^{d_{n+1}}
& J_n \ar[r]^{d_n} & J_{n-1} \ar[r]^{d_{n-1}} & \ldots}
\end{eqnarray}
are two complexes of abelian groups. A \textbf{map of complexes} $F \colon I_\bullet \to
J_\bullet$ is a family of homomorphisms
\begin{eqnarray}
F_n \colon I_n \to J_n
\end{eqnarray}
such that
\begin{eqnarray}
d_n \circ F_n = F_{n-1} \circ d_n.
\end{eqnarray}
Note that, as a consequence of this rule, the map $F_n$ must send the kernel of $d_n^I$
to the kernel of $d_n^J$. Moreover, the family $F$ induces maps on homology groups
\begin{eqnarray}
H_n ( I_\bullet ) \to H_n ( J_\bullet ).
\end{eqnarray}
for every $n$.

Let $p$ be a prime.  We are going to define the maps of chain complexes that are
associated with simplicial isomorphisms.  Some care must be taken in this
definition. Let $f \colon \Delta \to \Delta'$ be a simplicial isomorphism. An obvious way
to map $K_n \left( \Delta , \mathbb{F}_p \right)$ to $K_n \left( \Delta' , \mathbb{F}_p
\right)$ would be to naively apply $f$ like so: $\sum c_i Q_i \mapsto \sum c_i f ( Q_i
)$. However, this definition does not necessarily give a map of complexes, because
it is not necessarily compatible with the maps $d_i$.  The reader will recall that the
definition of $d_i$ depends on the ordering of the vertices of the simplicial complex in
question. The map $f$ may not be compatible with the ordering of the vertices of $\Delta$
and $\Delta'$.  In our definition of the maps $K_n \left( \Delta , \mathbb{F}_p \right)
\to K_n \left( \Delta , \mathbb{F}_p  \right)$, we need to take this ordering issue into
account.

Note that for any bijection $g \colon S_1 \to S_2$ between two totally ordered sets $S_1$
and $S_2$, there is a unique permutation $\alpha \colon S_2 \to S_2$ which makes the
composition $\alpha \circ g$ an order-preserving map.  Let us say that the \textbf{sign}
of the map $g$ is the sign of its associated permutation $\alpha$.\footnote{See
\cite{lang}, pp.~30--31 for a definition of the sign of a permutation.  Briefly: if
$\sigma : X \to X$ is a permutation of a finite set $X$, then we can write $\sigma =
\tau_1 \circ \tau_2 \circ \ldots \circ \tau_m$ for some $m$, where each of the maps
$\tau_i \colon X \to X$ is a permutation which transposes two elements. The sign of
$\sigma$ is $(-1)^m$.}

\begin{definition}\label{chainmapdef}
Suppose that $\Delta$ and $\Delta'$ are abstract simplicial complexes whose vertex
sets are totally ordered.  Suppose that $f \colon \Delta \to \Delta'$ is a simplicial
isomorphism and that $\hat{f}$ is its vertex map. Let $p$ be a prime, and let $n$ be a
nonnegative integer. The \textbf{$n$th chain map associated with $f$} (over
$\mathbb{F}_p$) is the unique $\mathbb{F}_p$-linear map
\begin{eqnarray}
F_n \colon K_n ( \Delta , \mathbb{F}_p ) \to K_n ( \Delta' , \mathbb{F}_p )
\end{eqnarray}
given by
\begin{eqnarray}
Q & \mapsto & \big( \sign (\hat{f}_{\mid Q}) \big)  f ( Q ).
\end{eqnarray}
for all $Q \in \Delta$.  Here, $( \sign ( \hat{f}_{\mid Q} ) )$ denotes the sign of the
bijection $( \hat{f} )_{\mid Q} \colon Q \to f ( Q )$.
\end{definition}

Let $\Sigma'$ be the complex from Example~\ref{solidtriangleexample}, and let $g \colon
\Sigma' \to \Sigma'$ be the automorphism given by the permutation $[0 \mapsto 1, 1
\mapsto 2, 2 \mapsto 0]$.  Then the chain maps $G_n$ associated with $g$ are
as shown below.
\begin{eqnarray*}
\begin{array}{ccccc}
G_0 ( [0] ) = [1] && G_1 ([0,1]) = [1,2] && \\
G_0 ( [1] ) = [2] && G_1 ([1,2]) = -[0,2] && G_2 ( [0, 1, 2] ) = [0, 1, 2] \\
G_0 ( [2] ) = [0] & & G_1 ( [0,2]) = -[0,1] &&
\end{array}
\end{eqnarray*}

\begin{proposition}
The chain maps $F_n$ of Definition~\ref{chainmapdef} determine a map of complexes,
\begin{eqnarray}
F \colon K_\bullet \left( \Delta , \mathbb{F}_p \right) \to K_\bullet \left ( \Delta' ,
\mathbb{F}_p \right)\!.
\end{eqnarray}
\end{proposition}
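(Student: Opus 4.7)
The plan is to verify the commutativity relation $d_n \circ F_n = F_{n-1} \circ d_n$ on a generic basis element. Fix an $n$-simplex $Q = [v_0, v_1, \ldots, v_n]$ in $\Delta$ and set $w_i = \hat{f}(v_i)$. Let $w'_0 < w'_1 < \cdots < w'_n$ be the elements of $f(Q)$ in increasing order, and let $\pi$ be the permutation of $\{0, 1, \ldots, n\}$ determined by $w_i = w'_{\pi(i)}$. Unwinding the definition of sign (the permutation $\alpha$ making $\alpha \circ \hat{f}|_Q$ order-preserving is precisely $\pi^{-1}$, which has the same sign as $\pi$), one gets $\sign(\hat{f}|_Q) = \sign(\pi)$, and hence $F_n(Q) = \sign(\pi) \cdot [w'_0, w'_1, \ldots, w'_n]$.

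Expanding both sides, I want to show
$$\sign(\pi) \sum_{j=0}^{n} (-1)^j \bigl[w'_0, \ldots, \widehat{w'_j}, \ldots, w'_n \bigr] = \sum_{i=0}^{n} (-1)^i \sign\bigl(\hat{f}|_{Q \setminus \{v_i\}}\bigr)\, f\bigl(Q \setminus \{v_i\}\bigr).$$
Since the elements of $f(Q \setminus \{v_i\}) = \{w'_0, \ldots, \widehat{w'_{\pi(i)}}, \ldots, w'_n\}$ already appear in increasing order, this simplex equals $[w'_0, \ldots, \widehat{w'_{\pi(i)}}, \ldots, w'_n]$ in $K_{n-1}(\Delta', \mathbb{F}_p)$. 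Pairing the $i$-th summand on the right with the $j = \pi(i)$ summand on the left, the proposition reduces to the single sign identity
$$\sign\bigl(\hat{f}|_{Q \setminus \{v_i\}}\bigr) \cdot (-1)^i = \sign(\pi) \cdot (-1)^{\pi(i)}, \qquad i = 0, 1, \ldots, n.$$

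The main obstacle is verifying this sign identity, which is pure permutation bookkeeping. I would handle it by comparing inversion counts. Let $\sigma_i$ be the permutation of $\{0, 1, \ldots, n-1\}$ induced by $\hat{f}|_{Q \setminus \{v_i\}}$ after the two order-preserving reindexings of the domain $\{0, \ldots, n\} \setminus \{i\}$ and the codomain $\{0, \ldots, n\} \setminus \{\pi(i)\}$ with $\{0, \ldots, n-1\}$, so that $\sign(\hat{f}|_{Q \setminus \{v_i\}}) = \sign(\sigma_i)$. The difference $\textnormal{inv}(\pi) - \textnormal{inv}(\sigma_i)$ equals the number of inversions of $\pi$ involving the index $i$, which splits into the pairs $(i, k)$ with $k > i$ and $\pi(k) < \pi(i)$ together with the pairs $(k, i)$ with $k < i$ and $\pi(k) > \pi(i)$. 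A short double count using $P = |\{k < i : \pi(k) < \pi(i)\}|$ with the relations $P + |\{k < i : \pi(k) > \pi(i)\}| = i$ and $P + |\{k > i : \pi(k) < \pi(i)\}| = \pi(i)$ yields $\textnormal{inv}(\pi) - \textnormal{inv}(\sigma_i) \equiv i + \pi(i) \pmod{2}$, which is exactly the required identity. Since $F_n$ and the boundary maps are $\mathbb{F}_p$-linear, the verification on basis simplices extends to all chains, so the family $(F_n)$ is a map of complexes.
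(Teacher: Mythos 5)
Your proof is correct and follows essentially the same route as the paper's: reduce to verifying $d_n \circ F_n = F_{n-1} \circ d_n$ on a basis $n$-simplex $Q$, match the term corresponding to deleting $v_i$ on one side with the term deleting $w'_{\pi(i)}$ on the other, and reduce to the sign identity $\sign(\hat{f}|_{Q\setminus\{v_i\}}) = (-1)^{\pi(i)-i}\sign(\hat{f}|_Q)$, which is exactly the paper's relation with $j=\pi(i)$. The one place you go beyond the paper is that you actually prove this sign identity by the inversion count, whereas the paper merely states it as ``a fact (easily proven from the definition of sign)''; your double count using $P = |\{k<i : \pi(k)<\pi(i)\}|$ is a clean way to discharge that step.
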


\begin{proof}
It suffices to show that for any $n > 0$, and any $n$-simplex $Q \in \Delta$,
\begin{eqnarray}
d_n ( F_n ( Q ) ) = F_n ( d_n ( Q ) ).
\end{eqnarray}
Let $n$ be a positive integer, and let $Q \in \Delta$ be an $n$-simplex. Write the
simplices $Q$ and $f(Q)$ as
\begin{eqnarray}
Q = [v_0, v_1, \ldots , v_n ], \qquad f(Q) = [w_0, w_1, \ldots, w_n ].
\end{eqnarray}
(Here, as usual, we assume that the sequences $v_0, \ldots , v_n$ and $w_0, \ldots , w_n$
are in ascending order.) The elements $d_n ( F_n ( Q ) )$ and $F_n ( d_n ( Q ) )$ are
linear combinations of faces of the simplex $[w_0, \ldots , w_n]$.  We need simply to
show that the coefficients in the expressions for $d_n ( F_n ( Q ) )$ and $F_n ( d_n ( Q
) )$ are the same.

Suppose that the face
\begin{eqnarray}
[v_0, v_1, \ldots , v_{i-1} , v_{i+1} , \ldots , v_n]
\end{eqnarray}
of $Q$ maps to the face
\begin{eqnarray}
[w_0, w_1, \ldots , w_{j-1} , w_{j+1} , \ldots , w_n]
\end{eqnarray}
under $f$.  Then, by applying the definitions of $d_n$ and $F_n$ we find that the
coefficient of $[w_0, w_1, \ldots , w_{j-1} , w_{j+1} , \ldots , w_n]$ in $d_n ( F_n ( Q
) )$ is
\begin{eqnarray}\label{coeff1}
 (-1)^j \big( \sign \hat{f}_{\mid Q} \big),
\end{eqnarray}
whereas the coefficient of $[w_0, w_1, \ldots , w_{j-1} , w_{j+1} , \ldots , w_n]$ in
$F_n ( d_n ( Q ) )$~is
\begin{eqnarray}\label{coeff2}
\big(\sign \hat{f}_{\mid \{ v_0, \ldots , v_{i-1} , v_{i+1} , \ldots v_n \}} \big)
(-1)^i.
\end{eqnarray}
It is a fact (easily proven from the definition of sign) that
\begin{eqnarray}
\big(\sign \hat{f}_{\mid \{ v_0, \ldots , v_{i-1} , v_{i+1} , \ldots v_n \}} \big) =
(-1)^{j-i} \big(\sign \hat{f}_{\mid Q} \big).
\end{eqnarray}
Therefore quantities~(\ref{coeff1}) and (\ref{coeff2}) are equal.  So the coefficients of
$[w_0, w_1, \ldots , w_{j-1} , w_{j+1} , \ldots , w_n]$ in $d_n ( F_n ( Q ) )$ and $F_n (
d_n ( Q ) )$ are the same.  This reasoning can be repeated to show that all of the
coefficients in $d_n ( F_n ( Q ) )$ and $F_n ( d_n ( Q ) )$ are the same.
\end{proof}

We have proven that if $f \colon \Delta \to \Delta'$ is a simplicial isomorphism, then
there is induced chain map (in fact, an isomorphism),
\begin{eqnarray}
F \colon K_\bullet \left( \Delta , \mathbb{F}_p \right) \to K_\bullet \left( \Delta' ,
\mathbb{F}_p \right)\!.
\end{eqnarray}
This chain map induces vector space isomorphisms
\begin{eqnarray}
H_n \left( \Delta , \mathbb{F}_p \right)
\to H_n \left( \Delta' , \mathbb{F}_p \right)
\end{eqnarray}
for every $n \geq 0$.  (We may denote these maps using the same symbol,~$F$.)


\section{Picturing Homology Groups}\label{picturingsection}

Before continuing any further with our technical discussion of chain complexes, let us
take a moment to explore some geometric interpretations for the concepts introduced so
far. For convenience, we will assume in the following discussion that $p$ is a prime
greater than or equal to~$5$.

Consider the the two-dimensional simplicial complex $\Gamma$ shown in
Figure~\ref{trianglefig1}. If $[v_0, v_1]$ is a $1$-simplex (where we assume the
existence of an ordering under which $v_0 < v_1$), then let us represent the chain
element $[v_0, v_1 ] \in K_1 ( \Gamma , \mathbb{F}_p )$ by drawing an arrow from $v_0$ to
$v_1$, and let us represent the negation $- [v_0, v_1 ] \in K_1 ( \Gamma , \mathbb{F}_p
)$ by drawing an arrow from $v_1$ to $v_0$. We can likewise use double-headed arrows to
represent\vadjust{\pagebreak} the elements $2 [v_0, v_1]$ and $-2 [v_0, v_1]$.  Sums of
such elements can be represented as collections of arrows. In this way we can draw
some of the elements of $K_1 ( \Gamma , \mathbb{F}_p )$ as diagrams like the one in
Figure~\ref{trianglefig1}.

\begin{figure}[!b]
 \centerline{\includegraphics{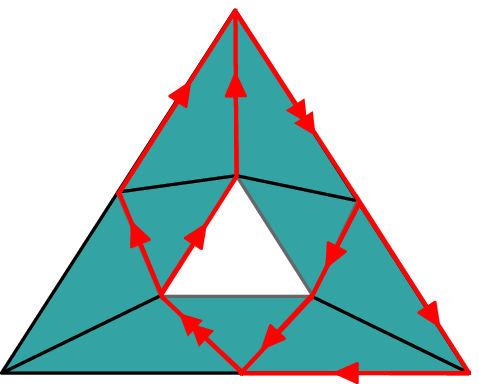}}
\fcaption{A complex $\Gamma$ and a chain element $a \in K_1 ( \Gamma , \mathbb{F}_p
)$.\label{trianglefig1}}
\end{figure}

\begin{figure}[!b]
 \centerline{\includegraphics{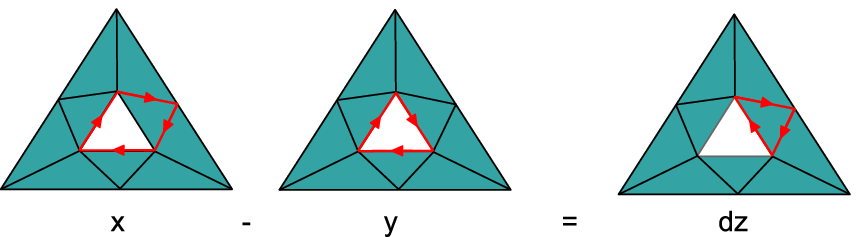}}
\fcaption{Two elements $x,y \in K_1 ( \Gamma , \mathbb{F}_p )$ which are contained in the
same coset of $H_1 ( \Gamma , \mathbb{F}_p )$.\label{trianglefig3}}
\end{figure}

An element $c \in K_1 ( \Gamma , \mathbb{F}_p )$ that is represented in this way will
satisfy $dc = 0$ if and only if for every vertex $v$ of $\Gamma$, the total multiplicity
of incoming arrows at $v$ is the same, mod $p$, as the total multiplicity of the outgoing
arrows at $v$.  The element $a$ represented in Figure~\ref{trianglefig1} is such a case.

Each element $c \in K_1 ( \Gamma , \mathbb{F}_p )$ satisfying $dc = 0$ represents an
element of the quotient $H_1 (  \Gamma , \mathbb{F}_p ) = \ker d_1 / \im d_2$, and thus
we can use this geometric interpretation to understand $H_1 ( \Gamma , \mathbb{F}_p )$.
Note that, although there are many diagrams that we could draw which satisfy the
balanced-multiplicity condition mentioned above, it will often occur that two diagrams
represent the same element of $H_1 ( \Gamma , \mathbb{F}_p )$. Figure~\ref{trianglefig3}
gives an example. In fact, any two elements $u, v \in \ker d_1$ will lie in the same
coset of $H_1 ( \Gamma , \mathbb{F}_p )$ if and only if the amount of flow around the
missing center triangle of $\Gamma$ is the same mod $p$ for both $u$ and $v$.  This makes
it easy to express the structure of $H_1 ( \Gamma , \mathbb{F}_p )$: if we let $\alpha
\in H_1 ( \Gamma , \mathbb{F}_p )$ be the coset containing the element $y$ from
Figure~\ref{trianglefig3}, then $H_1 ( \Gamma , \mathbb{F}_p )$ is a one-dimensional
$\mathbb{F}_p$-vector space that is spanned by $\alpha$.

Meanwhile, it is easy to see that $\ker d_2 = \{ 0 \}$ and hence \hbox{$H_2 ( \Gamma ,
\mathbb{F}_p ) = \{ 0 \}$}.  We thus have the following:
\begin{eqnarray}
H_0 ( \Gamma , \mathbb{F}_p ) & \cong & \mathbb{F}_p \\
H_1 ( \Gamma , \mathbb{F}_p ) & \cong & \mathbb{F}_p \\
H_i ( \Gamma , \mathbb{F}_p ) & \cong & \{ 0 \} \hskip0.2in
\textnormal{for all } i \geq 2.
\end{eqnarray}
This kind of reasoning can be used to describe the homology groups of any finite
simplicial complex $\Pi$ that is contained in $\mathbb{R}^2$. The dimension of $H_1 ( \Pi
, \mathbb{F}_p )$ for such a complex is always equal to the number holes enclosed by
$\Pi$.

\begin{figure}[!b]
 \centerline{\includegraphics[scale=0.99]{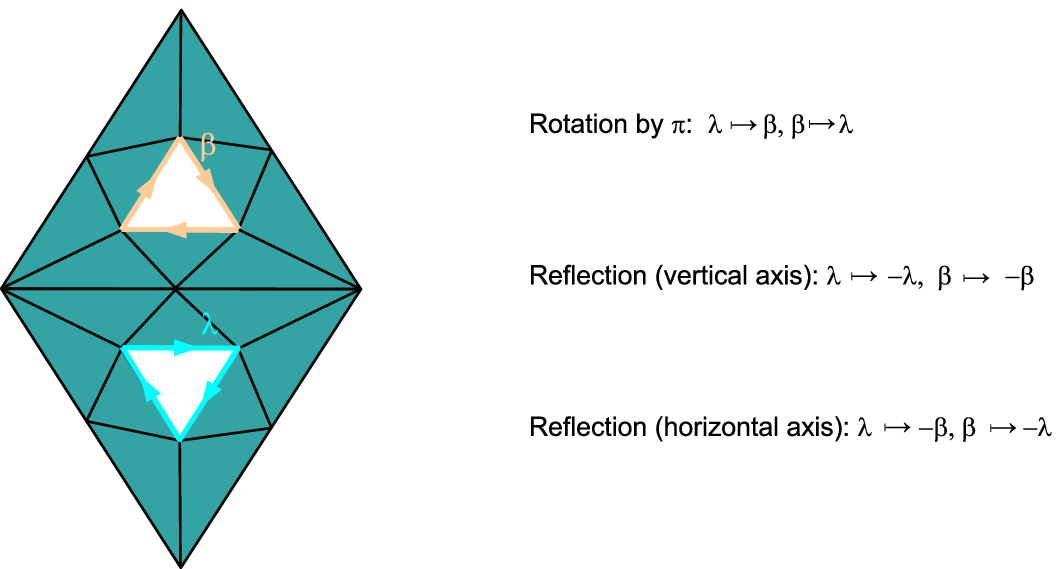}}
\fcaption{The complex $\Gamma'$ and the effect of three different
automorphisms.\label{2trianglefig}}
 \vspace*{-3pt}
\end{figure}

Such visualizations are also useful for understanding the behavior of homology groups
under automorphisms. Figure~\ref{2trianglefig} shows an example of a simplicial complex
$\Gamma'$ for which $H_1 ( \Gamma' , \mathbb{F}_p ) \cong \mathbb{F}_p^2$.  Any
automorphism of $\Gamma'$ induces a linear automorphism of $H_1 ( \Gamma' , \mathbb{F}_p
)$.  The figure describes a few such automorphism in terms of two chosen basis elements
$\lambda , \beta \in H_1 ( \Gamma' , \mathbb{F}_p )$.

\begin{figure}[!t]
 \centerline{\includegraphics[scale=0.95]{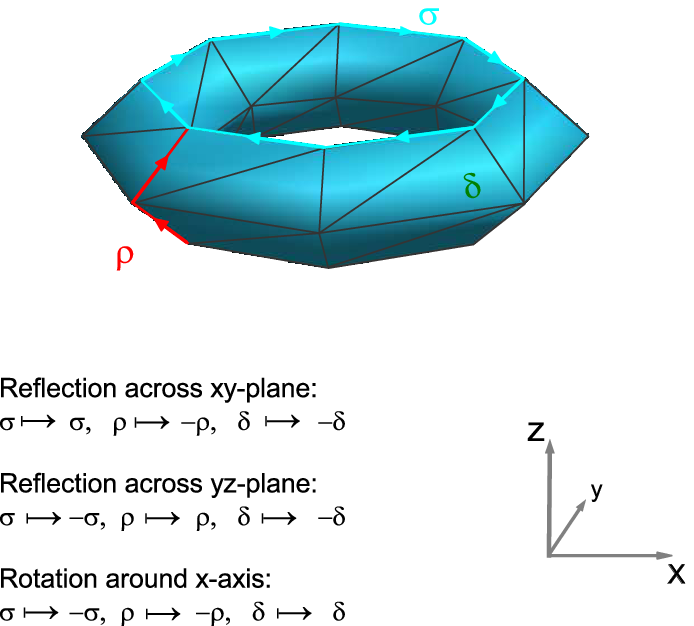}}
\fcaption{The complex $\Lambda$ and the effect of three different
automorphisms.\label{torusfigure}}
 \vspace*{-3pt}
\end{figure}

To observe nontrivial automorphisms of higher homology groups, we need to consider
simplicial complexes in three-dimensional space. Figure~\ref{torusfigure} shows a
simplicial complex $\Lambda$ in $\mathbb{R}^3$ which has the shape of a torus.  Let $z
\in K_2 ( \Lambda , \mathbb{F}_p )$ be a linear combination of\vadjust{\pagebreak} all
the $2$-simplices in $\Lambda$ in which  the coefficient of the simplex $[v_0, v_1, v_2]$
in $z$ is $(+1)$ if the vertices $v_0$, $v_1$, and $v_2$ appear in clockwise order on the
surface of the torus, and $(-1)$ if they appear in counterclockwise order.  When
Definition~\ref{boundarymapdef} is applied to compute $d z$, all terms cancel and we find
that $d z = 0$.  The element $z$ determines a coset $\delta \in H_2 ( \Lambda ,
\mathbb{F}_p)$, which spans the one-dimensional space $H_2 ( \Lambda, \mathbb{F}_p )$.

 \enlargethispage{12pt}

Figure~\ref{torusfigure} gives a basis $\{ \sigma , \rho \}$ for the
two-dimensional space $H_1  ( \Lambda , \mathbb{F}_p )$, and explains the
effect of various automorphisms on $H_1 ( \Lambda , \mathbb{F}_p )$ and $H_2 ( \Lambda ,
\mathbb{F}_p )$.

\section{Some Homological Algebra}

We resume developing concepts from an algebraic standpoint. It is helpful now to take
time to study homology groups in a more abstract setting, without reference to simplicial
complexes.  For  any complex of abelian groups
\begin{equation}
\xymatrix{\ldots
\ar[r] & K_{n+1} \ar[r]^{d_{n+1}}
& K_n \ar[r]^{d_n} & K_{n-1} \ar[r]^{d_{n-1}} & \ldots},
\end{equation}
the $n$th homology group of $K_\bullet$ is defined by
\begin{equation}
H_n ( K_\bullet , \mathbb{F}_p ) =
( \ker  d_n ) / ( \im  d_{n+1} ).
\end{equation}
In this part of the text we will state a result (Proposition~\ref{snakelemmaprop}) which
allows us to relate the homology groups\vadjust{\pagebreak} of $K_\bullet$ to the
homology groups of smaller complexes. This will be an essential building block in later
proofs.

Let us say that a sequence of maps of abelian groups
\begin{eqnarray}
\xymatrix{ \ldots \ar[r] & A_{n+1} \ar[r]^{f_{n+1}} &
A_n \ar[r]^{f_n} &
A_{n-1} \ar[r]^{f_{n-1}} & \ldots }
\end{eqnarray}
is \textbf{exact} if it satisfies the condition $\ker f_n = \im f_{n+1}$ for every $n$.
Thus, a sequence of the form
\begin{eqnarray}
\xymatrix{ 0 \ar[r] & P \ar[r]^f & Q \ar[r]^g & R \ar[r] & 0}
\end{eqnarray}
is\enlargethispage{12pt} exact if and only if $f$ is injective, $g$ is surjective, and
$\im f = \ker g$. (Note that this makes $R$ isomorphic to the quotient $Q / f ( P )$.)
Suppose that a sequence of maps of complexes
\begin{eqnarray}\label{seqofcomplexes}
\xymatrix{ 0 \ar[r] & X_\bullet \ar[r]^F & Y_\bullet \ar[r]^G & Z_\bullet \ar[r] & 0}
\end{eqnarray}
is such that
\begin{eqnarray}
\xymatrix{ 0 \ar[r] & X_n \ar[r]^{F_n} &
Y_n \ar[r]^{G_n} & Z_n \ar[r] & 0 }
\end{eqnarray}
is an exact sequence for every $n$.  Then we will say that~(\ref{seqofcomplexes}) is an
exact sequence of complexes.

I claim that if
\begin{eqnarray}
\xymatrix{
&  \vdots \ar[d] & \vdots \ar[d] & \vdots \ar[d] &  \\
0 \ar[r] & X_{n+1} \ar[r]^F \ar[d]^d  & Y_{n+1} \ar[r]^G \ar[d]^d & Z_{n+1} \ar[r] \ar[d]^d & 0 \\
0 \ar[r] & X_n \ar[r]^F \ar[d]  & Y_n \ar[r]^G \ar[d] & Z_n \ar[r] \ar[d] & 0 \\
&  \vdots & \vdots & \vdots &  \\
}
\end{eqnarray}

\noindent is an exact sequence of complexes, then
\begin{eqnarray}
H_n ( X_\bullet ) \to H_n ( Y_\bullet ) \to H_n ( Z_\bullet )
\end{eqnarray}
is an exact sequence.  This can be seen through a ``diagram-chasing'' argument. It is
obvious that
\begin{eqnarray}
\im \left[ H_n ( X_\bullet ) \to H_n ( Y_\bullet ) \right] \subseteq
\ker \left[ H_n ( Y_\bullet ) \to H_n ( Z_\bullet ) \right],
\end{eqnarray}
and so we only need to prove the reverse inclusion. Suppose that $(y + \im d_n^Y)$ is a
coset in $H_n ( Y_\bullet )$ that is killed by the map to $H_n ( Z_\bullet )$.  Then $G (
y ) \in \im d_{n+1}^Z$, so we can find $z' \in Z_{n+1}$ such that $dz' = G( y )$.
Choosing an arbitrary element $y' \in G^{-1} \{ z' \}$, we have $y - dy' \in \ker G$, and
therefore by exactness, $F (x) = y - dy'$ for some $x$.  Since $dy = 0$ and $d(dy') = 0$,
we have $F ( dx ) = d F (x ) = 0$ and therefore $dx = 0$.  Thus $(x + \im d_n^X)$ is a
coset in $H_n ( X_\bullet )$ which maps to $y + \im d_n^Y$, and the claim is proved.


While it might be tempting to assume that the maps $H_n ( X_\bullet ) \to H_n ( Y_\bullet
)$ are injective and the maps $H_n ( Y_\bullet ) \to H_n ( Z_\bullet )$ are surjective,
this is not generally true.  The homology groups of $X_\bullet$, $Y_\bullet$, and
$Z_\bullet$ have a more complex relationship which is expressed by the following
proposition.

\begin{proposition}\label{snakelemmaprop}
Let $X_\bullet$, $Y_\bullet$, and $Z_\bullet$ be complexes of abelian groups,
and let $F \colon X_\bullet \to Y_\bullet$ and $G \colon Y_\bullet \to Z_\bullet$ be
maps of complexes such that for any $n$, the sequence
\begin{eqnarray}
\xymatrix{ 0 \ar[r] & X_n \ar[r]^{F_n} &
Y_n \ar[r]^{G_n} & Z_n \ar[r] & 0 }
\end{eqnarray}
is an exact sequence.  Then, there exist homomorphisms
\begin{eqnarray}
\gamma_n \colon H_n \left( Z_\bullet  \right) \to
H_{n-1} \left( X_\bullet \right)
\end{eqnarray}

\noindent for every $n$ which are such that the sequence
\begin{eqnarray}\label{snakesequence}
\xymatrix{ \ldots \ar[r] & H_2 ( Y_\bullet ) \ar[r] &
H_2 ( Z_\bullet ) \ar[lldd]^{\gamma_2} \\
\\
H_1 ( X_\bullet ) \ar[r] & H_1 ( Y_\bullet )
\ar[r] & H_1 ( Z_\bullet) \ar[lldd]^{\gamma_1} \\
\\
H_{0} ( X_\bullet ) \ar[r] & H_{0} ( Y_\bullet)
\ar[r] & H_0 (  Z_\bullet ) \ar[r] & 0 }
\end{eqnarray}
is exact.
\end{proposition}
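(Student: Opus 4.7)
The plan is to carry out the standard diagram chase that constructs the connecting homomorphism $\gamma_n$ and then verify exactness of the long sequence at each of the three types of positions.

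First I will define $\gamma_n$. Given a class in $H_n(Z_\bullet)$, pick a representative $z \in \ker d_n^Z$. Since $G_n$ is surjective, choose $y \in Y_n$ with $G_n(y) = z$. Then $G_{n-1}(d_n^Y y) = d_n^Z(G_n y) = d_n^Z z = 0$, so by exactness at $Y_{n-1}$ there is a (unique, by injectivity of $F_{n-1}$) element $x \in X_{n-1}$ with $F_{n-1}(x) = d_n^Y y$. The computation $F_{n-2}(d_{n-1}^X x) = d_{n-1}^Y(F_{n-1} x) = d_{n-1}^Y d_n^Y y = 0$, combined with the injectivity of $F_{n-2}$, shows $x \in \ker d_{n-1}^X$. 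Set $\gamma_n([z]) := [x] \in H_{n-1}(X_\bullet)$.

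Next I will check that $\gamma_n$ is well-defined. If one replaces $y$ by another lift $y + F_n(x')$ of $z$, then $d_n^Y y$ changes by $F_{n-1}(d_n^X x')$, so $x$ changes by $d_n^X x'$, a boundary; the class $[x]$ is unchanged. If one replaces $z$ by $z + d_{n+1}^Z z'$, lift $z'$ to $y' \in Y_{n+1}$ and use $y + d_{n+1}^Y y'$; this adjusts $d_n^Y y$ by $d_n^Y d_{n+1}^Y y' = 0$, leaving $x$ unchanged. Linearity of $\gamma_n$ is routine.

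Now I will verify exactness of~(\ref{snakesequence}) at each spot. Exactness at $H_n(Y_\bullet)$ was already established in the paragraph preceding the proposition. Exactness at $H_n(Z_\bullet)$: if $[z] = G[y]$ comes from $Y_n$, then one may take the lift to be $y$ itself, so $d_n^Y y = 0$ and hence $x = 0$, showing $\gamma_n \circ G_* = 0$; conversely, if $\gamma_n[z] = 0$ then $x = d_n^X x''$ for some $x'' \in X_n$, and then $y - F_n(x'')$ is a lift of $z$ which is a cycle, so $[z] \in \im G_*$. Exactness at $H_{n-1}(X_\bullet)$: the composite $F_* \circ \gamma_n$ sends $[z]$ to $[F_{n-1} x] = [d_n^Y y] = 0$; conversely, if $[x] \in H_{n-1}(X_\bullet)$ satisfies $F_{n-1}(x) = d_n^Y y$ for some $y \in Y_n$, then $z := G_n(y)$ is a cycle (because $d_n^Z z = G_{n-1}(d_n^Y y) = G_{n-1} F_{n-1}(x) = 0$) and by construction $\gamma_n[z] = [x]$. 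Finally, surjectivity of $H_0(Y_\bullet) \to H_0(Z_\bullet)$ follows from the surjectivity of $G_0$ (every cycle in $Z_0$ lifts to $Y_0$, and every element of $Y_0$ is automatically a cycle in degree $0$ under our conventions, with the obvious adjustment).

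The main obstacle is bookkeeping rather than conceptual: there are six inclusions to check for exactness (three positions, two directions each), and each requires chasing an element through the commutative grid while keeping track of which groups are injected into or surjected onto. The key technical facts that make every chase succeed are the injectivity of $F_{n}$ (used to conclude that a preimage is a cycle) and the surjectivity of $G_n$ (used to produce lifts); anticommutation with the $d$'s is guaranteed by the hypothesis that $F$ and $G$ are maps of complexes.
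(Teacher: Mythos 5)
Your proof is correct and complete. You construct the connecting homomorphism $\gamma_n$ by the standard diagram chase directly in the full chain complexes, verify well-definedness (independence of the choice of lift $y$ and of the representative $z$), and then check exactness at each of the three positions, correctly deferring exactness at $H_n(Y_\bullet)$ to the argument already given in the text before the proposition. The verification at degree $0$ is also handled.

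The paper's Appendix proof reaches the same conclusion by a different organizational route: it first proves the two-row snake lemma in isolation (for a short exact sequence of two-term complexes, yielding the six-term $\ker$/$\coker$ sequence), then proves a variant of that lemma in which the top and bottom rows are only assumed exact in the middle, and finally obtains the long exact sequence by applying this variant to the diagram
\begin{eqnarray*}
\coker d_{n+1}^X \to \coker d_{n+1}^Y \to \coker d_{n+1}^Z \to 0,
\qquad
0 \to \ker d_{n-1}^X \to \ker d_{n-1}^Y \to \ker d_{n-1}^Z,
\end{eqnarray*}
using that $H_n$ is both the kernel of the induced map $\coker d_{n+1} \to \ker d_{n-1}$ and the cokernel of $\coker d_{n+2} \to \ker d_n$. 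What the paper's route buys is modularity: the core chase is done once in the simplest possible setting, and the long exact sequence is then assembled formally. What your direct route buys is concreteness and fewer intermediate objects; the cost is that the single chase has more moving parts, which you handle correctly. Both proofs are the ``same'' chase at bottom, so this is a presentational rather than a mathematical difference.
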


 \removelastskip\pagebreak

Since the proof of this proposition is fairly technical, we have placed it in
Appendix.  (See Proposition~\ref{realsnakelemma}.) The maps $\gamma_n$ can be
briefly described like so: let $\overline{G}_n \colon Z_n \to Y_n$ be a function (not
necessarily a homomorphism) which is such that $G_n \circ \overline{G}_n$ is the identity
map, and let $\overline{F}_n \colon  F ( X_n ) \to X_n$ be the inverse of $F$. Then, for
any coset
\begin{eqnarray}
z + \im d_{n+1}^Z \in H_n ( Z_\bullet),
\end{eqnarray}
the image under $\gamma_n \colon H_n ( Z_\bullet ) \to H_{n-1} ( X_\bullet )$ is given by
\begin{eqnarray}
\overline{F}_{n-1} ( d ( \overline{G}_n ( z ))) + \im d_n^X \in H_{n-1} ( X_\bullet ).
\end{eqnarray}
As we will see, the above proposition is very useful because it allows us to draw
conclusions about the homology groups of a complex $Y_\bullet$ based on the homology
groups of its subcomplexes and quotient complexes.

We close with a few additional constructions. Note that for any map of complexes $F
\colon I_\bullet \to J_\bullet$, there exist the complexes
\begin{eqnarray}
\xymatrix{\ldots
\ar[r] & \im F_{n+1} \ar[r]^{d_{n+1}}
& \im F_n \ar[r]^{d_n} & \im F_{n-1} \ar[r]^{d_{n-1}} & \ldots}
\end{eqnarray}
and
\begin{eqnarray}
\xymatrix{\ldots
\ar[r] & \ker F_{n+1} \ar[r]^{d_{n+1}}
& \ker F_n \ar[r]^{d_n} & \ker F_{n-1} \ar[r]^{d_{n-1}} & \ldots} .
\end{eqnarray}
We write these complexes as $(\im F)$ and $(\ker F)$, respectively.
Note that these complexes fit into an exact sequence
\begin{eqnarray}
0 \to \ker F \to I_\bullet \to \im F \to 0.
\end{eqnarray}

The $\textbf{direct sum}$ of $I_\bullet$ and $J_\bullet$, written
$I_\bullet \oplus J_\bullet$, is the complex
\begin{equation}
\xymatrix{ \ldots \ar[r] & I_{n+1} \oplus J_{n+1} \ar[r] & I_n \oplus J_n \ar[r] &
I_{n-1} \oplus J_{n-1} \ar[r] & \ldots },
\end{equation}
where the maps in this complex are simply the maps induced by $d_k \colon I_k \to
I_{k-1}$ and $d_k \colon J_k \to J_{k-1}$. Note that the homology groups of this complex
are simply $H_n ( I_\bullet ) \oplus H_n ( J_\bullet )$.

\section{Collapsibility Implies Acyclicity}\label{acyclicitysection}

Now we will offer our first application of Proposition~\ref{snakelemmaprop}. In
\textit{\nameref{graphpropsection}}, we defined the notion of {\it collapsibility} for
simplicial \hbox{complexes}.  In this part of the text we will see how the condition of
collapsibility for a simplicial complex $\Delta$ implies that the homology groups of
$\Delta$ are trivial.

We begin with a useful definition.

\begin{definition}
Let $\Delta$ be an abstract simplicial complex whose vertex-set is totally ordered. Let
$p$ be a prime, and let $n$ be a nonnegative integer.  Define the map
\begin{eqnarray}
s \colon K_0 \left( \Delta , \mathbb{F}_p \right) \to \mathbb{F}_p
\end{eqnarray}
by asserting that $s ( \gamma )$ is the sum of the coefficients of $\gamma$.  That is, if
\begin{eqnarray}
\gamma & = & c_1 Q_1 + c_2 Q_2 + \ldots + c_r Q_r,
\end{eqnarray}
with $c_i \in \mathbb{F}_p$ and $Q_i \in \Delta$, then
\begin{eqnarray}
s( \gamma ) = c_1 + c_2 + \ldots + c_r \in \mathbb{F}_p.
\end{eqnarray}
The \textbf{reduced $n$th homology group of $\Delta$} over $\mathbb{F}_p$, denoted
$\widetilde{H}_n \left( \Delta , \mathbb{F}_p \right)$, is the $n$th homology group of
the complex
\begin{eqnarray*}
\xymatrix{\ldots \ar[r] & K_2 \left( \Delta, \mathbb{F}_p \right) \ar[r]^{d_2} & K_1
\left( \Delta , \mathbb{F}_p \right)  \ar[r]^{d_1}& K_0 \left( \Delta , \mathbb{F}_p
\right) \ar[r]^{\hspace*{0.2in}s} & \mathbb{F}_p \ar[r] & 0 }
\end{eqnarray*}
\end{definition}

The reduced homology groups $\left\{ \widetilde{H}_n \left( \Delta , \mathbb{F}_p \right)
\right\}$ of an abstract simplicial complex $\Delta$ are the same as the ordinary
homology groups $\left\{ H_n \left( \Delta , \mathbb{F}_p \right) \right\}$, except that
the dimension of $\widetilde{H}_0 \left( \Delta , \mathbb{F}_p \right)$ is one less than
the dimension of $H_0 \left( \Delta , \mathbb{F}_p \right)$.  Note that the reduced
homology groups of the trivial complex $\{ \{ 0 \} \}$ are all zero.

\begin{definition}\label{acyclicitydefinition}
Let $\Delta$ be an abstract simplicial complex whose vertex-set is totally ordered.
Then, $\Delta$ is \textbf{$\mathbb{F}_p$-acyclic} if
\begin{eqnarray}
\dim_{\mathbb{F}_p } \widetilde{H}_n \left( \Delta , \mathbb{F}_p \right) = 0
\end{eqnarray}
for all nonnegative integers $n$.
\end{definition}

 \removelastskip\pagebreak

Stated differently, a complex is $\mathbb{F}_p$-acyclic if its $\mathbb{F}_p$-homology is
the same as that of single point. An example of an $\mathbb{F}_p$-acyclic simplicial
complex is this one, from Example~\ref{solidtriangleexample}.
\begin{eqnarray*}
\Sigma' = \left\{ \{ 0 \} , \{ 1 \}, \{ 2 \} , \{ 0, 1 \} , \{ 1, 2 \} , \{ 0, 2 \} , \{
0, 1, 2 \} \right\}\!.
\end{eqnarray*}
One can check by direct calculation that all of the reduced homology groups of this
simplicial complex are trivial.  On the other hand, the simplicial complex $\Sigma$ of
Example~\ref{triangleexample} is \textit{not} $\mathbb{F}_p$-acyclic, since
$\widetilde{H}_1 \left( \Sigma' , \mathbb{F}_p \right) \cong \mathbb{F}_p$.

Another way of expressing Definition~\ref{acyclicitydefinition} is this: an abstract
simplicial complex $\Delta$ is $\mathbb{F}_p$-acyclic if
\begin{eqnarray*}
\xymatrix{\ldots \ar[r] & K_2 \left( \Delta, \mathbb{F}_p \right) \ar[r]^{d_2} & K_1
\left( \Delta , \mathbb{F}_p \right) \ar[r]^{d_1} & K_0 \left( \Delta , \mathbb{F}_p
\right) \ar[r]^{\hskip0.2in s} & \mathbb{F}_p \ar[r] & 0 }
\end{eqnarray*}
is an exact sequence.

When a complex forms an exact sequence, let us refer to it as an \textbf{exact complex}.
The following algebraic lemma is useful for proving exactness of complexes.

\begin{lemma}\label{exactnesslemma}
Let
\begin{eqnarray}
0 \to X_\bullet \to Y_\bullet \to Z_\bullet \to 0
\end{eqnarray}
be an exact sequence of complexes of abelian groups.  Then,
\begin{enumerate}
\item \label{alglemmapart1}
If $X_\bullet$ and $Y_\bullet$ are exact complexes, then $Z_\bullet$ is an exact\\
complex.
\item \label{alglemmapart2}
If $Y_\bullet$ and $Z_\bullet$ are exact complexes, then $X_\bullet$ is an exact\\
complex.
\item \label{alglemmapart3}
If $X_\bullet$ and $Z_\bullet$ are exact complexes, then $Y_\bullet$ is an exact\\
complex.
\end{enumerate}
\end{lemma}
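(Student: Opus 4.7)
The plan is to invoke Proposition~\ref{snakelemmaprop} to convert the given short exact sequence of complexes into a long exact sequence in homology, and then deduce each of the three implications by a short diagram chase inside that sequence.

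First I would record the elementary observation that a complex $C_\bullet$ forms an exact sequence if and only if $H_n(C_\bullet) = 0$ for every $n \geq 0$: this is immediate since $H_n(C_\bullet) = \ker d_n / \im d_{n+1}$, and exactness at position $n$ is precisely the equality $\ker d_n = \im d_{n+1}$. So the three parts of the lemma become equivalent to the assertion that, in the long exact sequence supplied by Proposition~\ref{snakelemmaprop},
\[
\cdots \to H_n(X_\bullet) \to H_n(Y_\bullet) \to H_n(Z_\bullet) \stackrel{\gamma_n}{\to} H_{n-1}(X_\bullet) \to H_{n-1}(Y_\bullet) \to \cdots \to H_0(Z_\bullet) \to 0,
\]
vanishing of any two of the three families of homology groups forces the vanishing of the third.

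Each part then reduces to a three-term argument. For part~(\ref{alglemmapart1}), fix $n \geq 0$ and look at the exact piece $H_n(Y_\bullet) \to H_n(Z_\bullet) \to H_{n-1}(X_\bullet)$: the left group vanishes by hypothesis, so the kernel of the right-hand map is $0$, and since the right-hand group also vanishes, $H_n(Z_\bullet)$ injects into $0$, forcing $H_n(Z_\bullet) = 0$. For part~(\ref{alglemmapart2}), fix $n \geq 0$ and look at $H_{n+1}(Z_\bullet) \to H_n(X_\bullet) \to H_n(Y_\bullet)$: the outer groups both vanish by hypothesis, so $H_n(X_\bullet) = 0$. Part~(\ref{alglemmapart3}) is the symmetric argument: at $H_n(Y_\bullet)$, the piece $H_n(X_\bullet) \to H_n(Y_\bullet) \to H_n(Z_\bullet)$ has outer groups zero, so the middle one vanishes as well.

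I do not anticipate any real obstacle; the argument is pure diagram chasing on the long exact sequence. The only minor bookkeeping concern is the right-hand end of that sequence. Since the complexes have no terms in negative degree, the sequence terminates with $H_0(Z_\bullet) \to 0$, which is exactly the input needed so that the three-term windows used above remain valid at $n = 0$ (in parts (\ref{alglemmapart1}) and (\ref{alglemmapart3}) the "next" term on the right is simply $0$; in part (\ref{alglemmapart2}) we never need to refer to a negative-degree term of $X_\bullet$).
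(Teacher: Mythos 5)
Your proposal is correct and matches the paper's proof exactly: both invoke Proposition~\ref{snakelemmaprop} to produce the long exact sequence in homology, note that exactness of a complex is equivalent to vanishing of all its homology groups, and then read off each implication by examining three-term windows of that sequence. The paper simply states the conclusion more tersely ("the reader will observe..."), while you spell out the diagram chase for each part.
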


\begin{proof}
We prove (\ref{alglemmapart1}).  Suppose that $X_\bullet$ and $Y_\bullet$ are
exact complexes.  By Proposition~\ref{snakelemmaprop}, there is an exact sequence
\begin{eqnarray*}
&&\ldots \to H_{n+1} ( Z_\bullet) \to H_n ( X_\bullet) \to H_n ( Y_\bullet) \to H_n(
Z_\bullet)\\
&& \qquad \to H_{n-1} ( X_\bullet) \to H_{n-1} ( Y_\bullet ) \to \ldots
\end{eqnarray*}
The reader will observe that since the groups $\left\{ H_n ( X_\bullet ) \right\}$ and
$\left\{ H_n ( Y_\bullet ) \right\}$ are all zero, the groups $\left\{ H_n ( Z_\bullet )
\right\}$ must all be zero as well. Therefore~$Z_\bullet$ is an exact complex.

Assertions (\ref{alglemmapart2}) and (\ref{alglemmapart3}) follow similarly.
\end{proof}

Now we are ready to prove our main theorem.

\begin{theorem}
\label{acyclicitytheorem} Let $p$ be a prime.  Let $\Delta$ be an abstract simplicial
complex which has a total ordering on its vertex set. If $\Delta$ is collapsible, then
$\Delta$ is $\mathbb{F}_p$-acyclic.
\end{theorem}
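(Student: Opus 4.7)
The plan is to induct on the length of a collapsing sequence for $\Delta$. For the base case, if $|\Delta| = 1$, say $\Delta = \{\{v\}\}$, then the reduced chain complex is $\cdots \to 0 \to \mathbb{F}_p \to \mathbb{F}_p \to 0$ with the final map $s$ equal to the identity, and hence trivially exact. For the inductive step, suppose $\Delta$ collapses to $\Delta'$ in a single elementary collapse, with free face $\beta$ whose unique maximal container is $\alpha$; by the inductive hypothesis applied to the shorter collapsing sequence that reduces $\Delta'$ to a point, $\Delta'$ is $\mathbb{F}_p$-acyclic.

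Since any simplex of $\Delta$ containing $\beta$ must be a face of $\alpha$, the deleted simplices are exactly $R = \{\sigma \in \Delta : \beta \subseteq \sigma \subseteq \alpha\}$. Writing $\tilde K_\bullet(-,\mathbb{F}_p)$ for the chain complex augmented by the sum-of-coefficients map $s$ onto a copy of $\mathbb{F}_p$ in degree $-1$, the subcomplex inclusion $K_\bullet(\Delta',\mathbb{F}_p) \subseteq K_\bullet(\Delta,\mathbb{F}_p)$ yields a short exact sequence of complexes
\begin{eqnarray*}
0 \to \tilde K_\bullet(\Delta', \mathbb{F}_p) \to \tilde K_\bullet(\Delta, \mathbb{F}_p) \to Q_\bullet \to 0,
\end{eqnarray*}
in which $Q_\bullet$ in degree $n$ is spanned by the $n$-dimensional members of $R$ and vanishes in degree $-1$ (since both augmentations hit the same $\mathbb{F}_p$).

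The key step is to show that $Q_\bullet$ is exact. On $Q_\bullet$ the induced boundary $\bar\partial$ kills every codimension-one face of $\beta \cup S$ that ends up back in $\Delta'$: deleting a vertex of $\beta$ yields a face not containing $\beta$ (hence in $\Delta'$, so zero in $Q_\bullet$), while deleting a vertex of $S$ yields another removed simplex. Indexing the basis of $Q_\bullet$ by subsets $S \subseteq A := \alpha \setminus \beta$ via $\beta \cup S \leftrightarrow S$, what remains is, up to a degree shift and a consistent adjustment of signs, the augmented chain complex of the full simplex on the vertex set $A$ (with the generator $\beta$ itself playing the role of $\emptyset$ in augmentation degree). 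Because $\beta$ is non-maximal, $A$ is nonempty, and for any chosen $v_0 \in A$ the map $h(T) = T \cup \{v_0\}$ when $v_0 \notin T$ (and $h(T) = 0$ otherwise) satisfies $\bar\partial h + h \bar\partial = \mathrm{id}$; hence $Q_\bullet$ has trivial homology in every degree.

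With $\tilde K_\bullet(\Delta', \mathbb{F}_p)$ exact by the inductive hypothesis and $Q_\bullet$ exact by the preceding paragraph, Lemma~\ref{exactnesslemma}(\ref{alglemmapart3}) forces $\tilde K_\bullet(\Delta, \mathbb{F}_p)$ to be exact, which is precisely the assertion that $\Delta$ is $\mathbb{F}_p$-acyclic. The main obstacle is the sign bookkeeping in the identification of $Q_\bullet$ with the augmented simplex complex on $A$: one must check that the signs appearing in $\bar\partial$ agree (up to a global sign) with the standard signs in the simplex boundary, and then verify that $h$ really does give a chain contraction on every basis element. Since we need only exactness rather than a canonical isomorphism, any residual sign discrepancies can be absorbed by rescaling basis vectors.
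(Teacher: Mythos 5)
Your proof is correct, and it takes a genuinely different route from the paper's. Both proofs decompose $\widetilde{K}_\bullet(\Delta,\mathbb{F}_p)$ against the subcomplex $\widetilde{K}_\bullet(\Delta',\mathbb{F}_p)$ and then invoke the three-term exactness lemma via the long exact sequence, so the overall skeleton is shared. The difference is how the quotient is shown to be exact. The paper first refines every elementary collapse into a sequence of \emph{primitive} elementary collapses (each deleting exactly one maximal simplex and one codimension-one free face), which makes the quotient complex the trivial two-term complex $\mathbb{F}_p \xrightarrow{\,\cong\,} \mathbb{F}_p$; the price is the (stated without proof) combinatorial fact that such a refinement always exists. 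You instead work with a general elementary collapse directly: you identify the quotient $Q_\bullet$ with the interval $\{\sigma : \beta \subseteq \sigma \subseteq \alpha\}$, observe it is combinatorially a cone (over any fixed $v_0 \in \alpha \setminus \beta$), and exhibit an explicit chain contraction $\bar\partial h + h\bar\partial = \mathrm{id}$. This buys you a single induction on the number of collapse steps with no refinement lemma, at the cost of a one-paragraph homotopy computation. Both are standard; your version is arguably closer to the usual treatment in algebraic topology texts. The one point I'd tighten is the sign discussion: rather than invoking an isomorphism with a shifted augmented simplex complex and then appealing to a rescaling argument, it is cleaner to define the contraction $H$ directly on the basis of $Q_\bullet$ by $H(\sigma) = (-1)^{j(v_0,\sigma)}(\sigma \cup \{v_0\})$ for $v_0 \notin \sigma$ (where $j(v_0,\sigma)$ is the insertion index), $H(\sigma)=0$ for $v_0 \in \sigma$, and verify $\bar d H + H \bar d = \mathrm{id}$ by the same cancellation used in the proof of Proposition~\ref{doublezeroprop}; this sidesteps the "absorb the signs by rescaling" hand-wave entirely.
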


\begin{proof}
Recall (from \textit{\nameref{graphpropertysimplicial}}) the definition of
\textbf{primitive elementary collapse}. For any elementary collapse $( \Sigma, \Sigma'
)$, there is a sequence of primitive elementary collapses which reduces \hbox{$\Sigma$ to
$\Sigma'$}:
\begin{eqnarray}
\Sigma, \Sigma_1 , \Sigma_2 , \ldots , \Sigma_t , \Sigma'
\end{eqnarray}
(This is an elementary fact which the reader is invited to prove as an exercise.)

Suppose that the complex $\Delta$ is collapsible. There exists a sequence of elementary
collapses which collapse $\Delta$ to a single $0$-simplex.  Therefore, there exists a
sequence of \textit{primitive} elementary collapses which collapse $\Delta$ to a single
$0$-simplex.  Let
\begin{eqnarray}
\Delta, \Delta_1 , \Delta_2, \ldots , \Delta_r
\end{eqnarray}
be such a sequence, with $\left| \Delta_r
\right| = 1$.

Let $Z_\bullet$ be the complex formed by the quotient groups
\begin{eqnarray}
Z_n = K_n (\Delta , \mathbb{F}_p)/ K_n \left( \Delta_1 , \mathbb{F}_p \right)
\end{eqnarray}
The structure of the complex $Z_\bullet$ is quite simple: it is isomorphic
to the following complex:
\begin{eqnarray}
\hspace*{-6pt}\xymatrix{\ldots \ar[r] & 0 \ar[r] & 0 \ar[r] & \mathbb{F}_p \ar[r]^{Id} &
\mathbb{F}_p \ar[r] & 0 \ar[r] & 0 \ar[r] & \ldots}\qquad
\end{eqnarray}

 \vfill\eject

\noindent There is an exact sequence of complexes
\begin{eqnarray}
\xymatrix{
& \vdots \ar[d] & \vdots \ar[d]  & \vdots \ar[d] & \\
0 \ar[r] & K_1 \left( \Delta_1 , \mathbb{F}_p \right)
\ar[r] \ar[d]
& K_1 \left( \Delta , \mathbb{F}_p \right)  \ar[r]  \ar[d]
& Z_1 \ar[r] \ar[d] & 0 \\
0 \ar[r] & K_0 \left( \Delta_1 , \mathbb{F}_p \right)
\ar[r] \ar[d]
& K_0 \left( \Delta , \mathbb{F}_p \right)  \ar[r]  \ar[d]
& Z_0 \ar[r] \ar[d] & 0 \\
0 \ar[r] & \mathbb{F}_p \ar[r] \ar[d] &
\mathbb{F}_p \ar[r] \ar[d]  & 0 \ar[r] \ar[d] & 0 \\
0 \ar[r] & 0 \ar[r] & 0 \ar[r] & 0 \ar[r] & 0 }
\end{eqnarray}
The complex $Z_\bullet$ is clearly exact.  So by Lemma~\ref{exactnesslemma},
the complex
\begin{eqnarray*}
\xymatrix{\ldots \ar[r] & K_2 \left( \Delta, \mathbb{F}_p \right) \ar[r]^{d_2} & K_1
\left( \Delta , \mathbb{F}_p \right) \ar[r]^{d_1} & K_0 \left( \Delta , \mathbb{F}_p
\right)\ar[r]^{\hspace*{0.2in}s} & \mathbb{F}_p \ar[r] & 0 }
\end{eqnarray*}
is exact iff
\begin{eqnarray*}
\hspace*{-3pt}\xymatrix{\ldots \ar[r] & K_2 (\Delta_1, \mathbb{F}_p) \ar[r]^{d_2} & K_1
(\Delta_1 , \mathbb{F}_p) \ar[r]^{d_1} & K_0 (\Delta_1 , \mathbb{F}_p)
\ar[r]^{\hspace*{0.2in}s} & \mathbb{F}_p \ar[r] & 0}
\end{eqnarray*}
is exact.  Therefore $\Delta$ is $\mathbb{F}_p$-acyclic iff $\Delta_1$ is $\mathbb{F}_p$-acyclic.

Similar reasoning shows that for any $i$, $\Delta_i$ is $\mathbb{F}_p$-acyclic iff
$\Delta_{i+1}$ is $\mathbb{F}_p$-acyclic.  The theorem follows by induction.
\end{proof}

\chapter{Fixed-Point Theorems}\label{fptchapter}

We are now ready to put the theory from \textit{\nameref{chaincomplexchapter}} to use to
study group actions $G \circlearrowleft \Delta$ on simplicial complexes.

\section{The Lefschetz Fixed-Point Theorem}\label{lftsection}

\begin{theorem}\label{acyclicimplieslft}
Let $\Delta$ be a finite abstract simplicial complex with ordered vertices.  Suppose that
$\Delta$ is $\mathbb{F}_p$-acyclic for some prime number $p$. Let $f \colon \Delta \to
\Delta$ be a simplicial automorphism. Then, there exists a simplex $Q \in \Delta$ such
that $f ( Q ) = Q$.
\end{theorem}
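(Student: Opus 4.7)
The plan is to use the classical Lefschetz number argument, adapted to coefficients in $\mathbb{F}_p$. For a simplicial automorphism $f \colon \Delta \to \Delta$, let $F_n \colon K_n(\Delta,\mathbb{F}_p) \to K_n(\Delta,\mathbb{F}_p)$ be the induced chain maps of Definition~\ref{chainmapdef}, and define
\begin{eqnarray}
L(f) \;=\; \sum_{n \geq 0} (-1)^n \Tr(F_n) \;\in\; \mathbb{F}_p.
\end{eqnarray}
I will compute $L(f)$ in two different ways and derive a contradiction from the assumption that $f$ has no fixed simplex.

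First, I would extract geometric information directly from the chain-level side. Each $F_n$ acts on the basis of $n$-simplices of $\Delta$ by the rule $Q \mapsto \sign(\hat{f}_{|Q})\, f(Q)$. A basis element therefore contributes to the trace of $F_n$ only when $f(Q) = Q$ as a set, in which case it contributes $\sign(\hat{f}_{|Q}) \in \{\pm 1\}$. Consequently, if $f$ fixes \emph{no} simplex of $\Delta$, then every $\Tr(F_n)$ vanishes and $L(f) = 0$.

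Next, I would compute $L(f)$ using the homology of $\Delta$. The key tool is the Hopf trace formula: for any endomorphism of a finite-dimensional chain complex,
\begin{eqnarray}
\sum_n (-1)^n \Tr(F_n) \;=\; \sum_n (-1)^n \Tr\bigl(f_*\colon H_n(\Delta,\mathbb{F}_p) \to H_n(\Delta,\mathbb{F}_p)\bigr).
\end{eqnarray}
I would prove this from the additivity of trace on short exact sequences: for each $n$, the chain map $F_n$ restricts compatibly to $F_n'$ on $\ker d_n$ and descends to $\overline{F}_n$ on $\im d_n \cong K_n/\ker d_n$, giving $\Tr(F_n) = \Tr(F_n') + \Tr(\overline{F}_n)$; similarly the short exact sequence $0 \to \im d_{n+1} \to \ker d_n \to H_n(\Delta,\mathbb{F}_p) \to 0$ yields $\Tr(F_n') = \Tr(\overline{F}_{n+1}) + \Tr(f_*|_{H_n})$. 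Summing with alternating signs causes all the $\Tr(\overline{F}_k)$ terms to telescope, leaving exactly $\sum_n (-1)^n \Tr(f_*|_{H_n})$.

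Finally, since $\Delta$ is $\mathbb{F}_p$-acyclic, $\widetilde{H}_n(\Delta,\mathbb{F}_p) = 0$ for all $n$, and in particular $\widetilde{H}_0 = 0$ forces $\Delta$ to be nonempty and connected (in the sense that any two vertices are $d_1$-equivalent), so $H_0(\Delta,\mathbb{F}_p) \cong \mathbb{F}_p$ while $H_n = 0$ for $n \geq 1$. The automorphism $f$ permutes vertices, but connectedness makes all vertex classes equal in $H_0$, so $f_*$ acts as the identity on this one-dimensional space and $\Tr(f_*|_{H_0}) = 1$. Combining everything yields $L(f) = 1$. Since $1 \neq 0$ in $\mathbb{F}_p$, this contradicts the vanishing $L(f) = 0$ obtained from the no-fixed-simplex assumption.

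The main obstacle is the Hopf trace formula step: I need to verify carefully that $F_n$ really does preserve the subspaces $\ker d_n$ and $\im d_n$ (immediate from the chain-map property) and that trace is genuinely additive on the two short exact sequences of $\mathbb{F}_p$-vector spaces being used. The verification that $f_*$ acts trivially on $H_0$ is also a small subtlety worth spelling out but should be routine from the definition of $d_1$ and the exactness of the augmented complex.
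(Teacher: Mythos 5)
Your proposal is correct and is essentially the same argument as the paper's proof. You define the Lefschetz number $L(f)=\sum_n (-1)^n \Tr(F_n)$, observe it vanishes when $f$ has no fixed simplex (since each $F_n$ permutes the basis of $n$-simplices with possible signs and a zero diagonal), establish the Hopf trace identity $L(f)=\sum_n(-1)^n\Tr(f_*|_{H_n})$ via additivity of trace over the short exact sequences $0\to\ker d_n\to K_n\to \im d_n\to 0$ and $0\to\im d_{n+1}\to\ker d_n\to H_n\to 0$ with a telescoping sum, and conclude $L(f)=1$ from $\mathbb{F}_p$-acyclicity; the paper's chain of equalities is precisely that same telescoping computation, with the same two families of short exact sequences. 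The one point you flag as a subtlety---that $f_*$ acts trivially on $H_0(\Delta,\mathbb{F}_p)\cong\mathbb{F}_p$---the paper simply asserts, so your instinct to spell it out (every vertex class is the same in $H_0$ because $\Delta$ is connected, and $f$ permutes vertices) is sound and fills a small gap the paper leaves to the reader.
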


\begin{proof}
Let us introduce some notation: if $Y$ is a finite-dimensional vector space over
$\mathbb{F}_p$, and $h \colon Y \to Y$ is a linear endomorphism, then let $\Tr_h \left( Y
\right)$ denote the trace of $h$ on $Y$.  Note that the trace function is additive over
exact sequences.  That is, if
\begin{eqnarray}
0 \to X \to Y \to Z \to 0
\end{eqnarray}
is an exact sequence, and $h$ acts on $X$, $Y$, and $Z$ in a compatible manner, then
\begin{eqnarray}
\Tr_h (Y) = \Tr_h (X) + \Tr_h (Z).
\end{eqnarray}

Let $F$ denote the chain map associated with $f$.  Consider the
\hbox{values~of}
\begin{eqnarray}
\Tr_F \left( H_n \left( \Delta , \mathbb{F}_p \right) \right)
\end{eqnarray}
for $n = 0 , 1, 2, {\ldots}\,$. Since $\Delta$ is $\mathbb{F}_p$-acyclic, these are easy
to compute.  If $n > 0$, then $H_n \left( \Delta , \mathbb{F}_p \right)$ is a zero vector
space. The vector space $H_0 \left( \Delta , \mathbb{F}_p \right)$ is a one-dimensional
$\mathbb{F}_p$-vector space on which $F$ acts trivially.  Therefore,
\begin{eqnarray}
\Tr_F \left( H_0 \left( \Delta , \mathbb{F}_p \right) \right) & = & 1, \\
\Tr_F \left( H_n \left( \Delta, \mathbb{F}_p \right) \right) & = & 0 \hskip0.2in \textnormal{ for } n > 0.
\end{eqnarray}

Now we can carry out the proof using the additivity of the trace function. Suppose, for
the sake of contradiction, that there is \textit{no} simplex in $\Delta$ which is
stabilized by $F$.  Then, for any $n$, the chain map $F$ acts on $K_n \left( \Delta ,
\mathbb{F}_p \right)$ by permuting the basis elements in a fixed-point free manner,
possibly changing signs.  A matrix representation of this action would be a matrix with
entries from the set $\{ -1, 0, 1 \}$, having only zeroes on the main diagonal.  Thus we
see that
\begin{eqnarray}
\Tr_F \left( K_n \left( \Delta , \mathbb{F}_p \right) \right) = 0.
\end{eqnarray}
Observe the following chain of equalities.
\begin{eqnarray*}
0 & = & \sum_{n \geq 0} (-1)^n \Tr_F \left( K_n \left( \Delta , \mathbb{F}_p \right) \right) \\
& = & \Tr_F \left( K_0 \left( \Delta , \mathbb{F}_p \right) \right) +
\sum_{n \geq 1} (-1)^n \left[
\Tr_F \left( \im d_n \right)  +
\Tr_F \left( \ker d_n \right)
\right] \\
& = & \Tr_F \left( K_0 \left( \Delta , \mathbb{F}_p \right) \right) - \Tr_F \left( \im
d_1 \right)\\
&&  +\, \sum_{n \geq 1} (-1)^n \left[ \Tr_F \left( \ker d_n \right) -
\Tr_F \left( \im d_{n+1} \right)  \right] \\
& = & \Tr_F \left( H_0 \left( \Delta , \mathbb{F}_p \right) \right)
+ \sum_{n \geq 1} (-1)^n \Tr_F \left( H_n \left( \Delta ,
\mathbb{F}_p \right) \right) \\
& = & 1.
\end{eqnarray*}
We obtain a contradiction.  Therefore, there must exist a simplex $Q$ in $\Delta$ such
that $f ( Q ) = Q$.
\end{proof}

 \pagebreak

 \vspace*{-20pt}

\begin{corollary}\label{lefschetzcorollary}
Let $\Sigma$ be a finite abstract simplicial complex which is collapsible.  Let $g \colon
\Sigma \to \Sigma$ be a simplicial automorphism.  Then there must exist a simplex $T \in
\Sigma$ such that $g ( T ) = T$.
\end{corollary}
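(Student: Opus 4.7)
The corollary is essentially a direct composition of two results already in hand, so the plan is very short. First I would fix an arbitrary prime $p$ (for instance $p=2$) and impose any total ordering on the vertex set of $\Sigma$; this is harmless because the conclusion (existence of a fixed simplex) is a statement about the underlying abstract complex and does not refer to an ordering. With these choices made, $\Sigma$ becomes an abstract simplicial complex with ordered vertices to which the theory of $\mathbb{F}_p$-chain complexes developed in the previous chapter applies.

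Next I would invoke Theorem~\ref{acyclicitytheorem}: since $\Sigma$ is collapsible, it is $\mathbb{F}_p$-acyclic. This upgrades the hypothesis of the corollary from a combinatorial/topological one (collapsibility) into the homological statement needed to apply the Lefschetz-type result we just proved.

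Finally I would apply Theorem~\ref{acyclicimplieslft} to the $\mathbb{F}_p$-acyclic complex $\Sigma$ and the simplicial automorphism $g$. The theorem produces a simplex $T\in \Sigma$ with $g(T)=T$, which is exactly the conclusion of the corollary.

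The only thing that requires any thought is making sure the hypotheses line up cleanly, in particular that we are free to choose an ordering of the vertices in order to apply Theorem~\ref{acyclicitytheorem} and Theorem~\ref{acyclicimplieslft}; since both of those theorems' conclusions (acyclicity of $\Sigma$ over $\mathbb{F}_p$, and the existence of a fixed simplex) are invariant under a change of vertex ordering, there is no real obstacle here. So there is no substantive new work; the corollary is a one-line consequence obtained by chaining the two theorems.
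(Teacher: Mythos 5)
Your proposal is correct and is exactly the paper's own proof: the paper disposes of the corollary in one sentence by chaining Theorem~\ref{acyclicitytheorem} (collapsible implies $\mathbb{F}_p$-acyclic) with Theorem~\ref{acyclicimplieslft} (the Lefschetz fixed-point theorem for $\mathbb{F}_p$-acyclic complexes). Your added remark about freely choosing the prime and the vertex ordering is a correct and slightly more careful spelling-out of the same argument.
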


\begin{proof}
This follows immediately from the above theorem and Theorem~\ref{acyclicitytheorem}.
\end{proof}

Let us consider what Theorem~\ref{acyclicimplieslft} means geometrically. Suppose that
$\Theta$ is an ordinary simplicial complex in $\mathbb{R}^N$ (see
\textit{\nameref{simplicialcomplexsection}}).  Then a simplicial automorphism of $\Theta$
is simply a continuous permutation of the points of $\Theta$ which maps every $n$-simplex
of $\Theta$ to another $n$-simplex of $\Theta$ in an affine-linear manner.

Suppose that $V \subset \mathbb{R}^N$ is a single $n$-simplex spanned by $\mathbf{v}_0 ,
\mathbf{v}_1 , \ldots , \mathbf{v}_n \in \mathbb{R}^N$. Note that any affine-linear map
of $V$ onto itself must fix the point
\begin{eqnarray}
\sum_{i = 0}^n \left( \frac{1}{n+1} \right) \mathbf{v}_i \in V.
\end{eqnarray}
{\tra2Thus, any simplicial map which stabilizes $V$ must have a fixed point in~$V$.
Therefore, when we establish that a simplicial automorphism maps a particular simplex to
itself, we have in fact proved that it has a fixed point. This justifies
our calling Theorem~\ref{acyclicimplieslft} a ``fixed-point theorem.''}

 \enlargethispage{10pt}

Let $f \colon \Delta \to \Delta$ be a simplicial automorphism which satisfies the
assumptions of Theorem~\ref{acyclicimplieslft}. We can use the reasoning from the proof
of Theorem~\ref{acyclicimplieslft} to draw further conclusions about the set $\Delta^f$.
Note that the quantity
\begin{eqnarray}
\Tr_F ( K_n \left( \Delta , \mathbb{F}_p \right) )
\end{eqnarray}
is equal to the number of $n$-simplicies $Q \in \Delta$ that satisfy $f ( Q ) = Q$.  By
the reasoning from the proof of Theorem~\ref{acyclicimplieslft}, we have
\begin{eqnarray}
\sum_{n \geq 0} (-1)^n \Tr_F \left( K_n ( \Delta , \mathbb{F}_p ) \right) & = & 1.
\end{eqnarray}
This implies a different version of Theorem~\ref{acyclicimplieslft}. For any subset $S$
of a simplicial complex $\Delta$, let
\begin{eqnarray}
\chi \left( S \right) & = & \sum_{n \geq 0} (-1)^n \left| \left\{ Q \in S \mid \dim ( Q )
= n \right\} \right|\!.
\end{eqnarray}
The quantity $\chi ( S )$ is called the \textbf{Euler characteristic} of $S$.

 \removelastskip\pagebreak

\begin{theorem}
Let $\Delta$ be a finite abstract simplicial complex with ordered vertices, and suppose
that $\Delta$ is $\mathbb{F}_p$-acyclic for some prime number $p$. Let $f \colon \Delta
\to \Delta$ be a simplicial automorphism. Then,
\begin{eqnarray}
\chi ( \Delta^f )  & = & 1.
\end{eqnarray}
\end{theorem}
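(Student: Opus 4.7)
The plan is to rerun the Lefschetz trace computation from the proof of Theorem~\ref{acyclicimplieslft}, but this time extracting a numerical identity rather than a contradiction. Let $F$ denote the chain map associated with $f$; the argument rests on three ingredients, each of which is essentially set up already in the discussion just above the theorem.

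The first step is the standard additivity of trace over $F$-equivariant short exact sequences, applied to
\[ 0 \to \ker d_n \to K_n(\Delta, \mathbb{F}_p) \to \im d_n \to 0 \]
and
\[ 0 \to \im d_{n+1} \to \ker d_n \to H_n(\Delta, \mathbb{F}_p) \to 0, \]
which reproduces the telescoping chain of equalities used in the proof of Theorem~\ref{acyclicimplieslft} to yield
\[ \sum_{n \geq 0} (-1)^n \Tr_F(K_n(\Delta, \mathbb{F}_p)) = \sum_{n \geq 0} (-1)^n \Tr_F(H_n(\Delta, \mathbb{F}_p)). \]
The second step evaluates the right-hand side using the $\mathbb{F}_p$-acyclicity of $\Delta$: all higher reduced homology vanishes, and $H_0(\Delta, \mathbb{F}_p) \cong \mathbb{F}_p$ with $F$ acting as the identity (there is a single component and $F$ must preserve it), so the right-hand side equals $1$. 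The third step substitutes the trace-count identification stated in the paragraph just above the theorem, namely that $\Tr_F(K_n(\Delta, \mathbb{F}_p))$ equals the number of $n$-simplices $Q \in \Delta$ with $f(Q) = Q$; plugging this in turns the left-hand side into $\sum_{n \geq 0} (-1)^n |\{Q \in \Delta^f : \dim Q = n\}| = \chi(\Delta^f)$, and the theorem falls out.

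The point that deserves a careful second look is the trace-count identification itself. Unpacking Definition~\ref{chainmapdef}, if $f(Q) = Q$ then $F$ acts on the basis element $Q$ by the scalar $\sign(\hat{f}_{|Q}) \in \{\pm 1\}$, so in principle one must invoke the convention flagged in the text and confirm that these signs are compatible with the alternating-sum bookkeeping that produces $\chi(\Delta^f)$. Modulo this identification, the proof is essentially a one-line reprise of the Lefschetz trace argument of Theorem~\ref{acyclicimplieslft} with the no-fixed-simplex hypothesis removed, and I expect no serious further obstacle.
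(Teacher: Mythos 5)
Your write-up reproduces the paper's reasoning exactly: the paragraph preceding this theorem asserts the trace-count identity $\Tr_F(K_n(\Delta,\mathbb{F}_p)) = |\{Q \in \Delta^f : \dim Q = n\}|$ and plugs it into the Lefschetz sum $\sum_n (-1)^n \Tr_F(K_n(\Delta,\mathbb{F}_p)) = 1$ already computed in the proof of Theorem~\ref{acyclicimplieslft}. However, the reservation you raise at the end is not a convention to be invoked and waved away---it is a genuine gap, in your proof and in the paper's stated version of the theorem alike. If $f(Q) = Q$ but $\hat{f}$ permutes the vertices of $Q$ by an odd permutation, the diagonal entry of $F_n$ at $Q$ is $\sign(\hat{f}_{\mid Q}) = -1$, so the trace does not count fixed $n$-simplices. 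A minimal counterexample: let $\Delta = \{\{0\},\{1\},\{0,1\}\}$ (collapsible, hence $\mathbb{F}_p$-acyclic by Theorem~\ref{acyclicitytheorem}) and let $f$ transpose $0$ and $1$. Then $\Delta^f = \{\{0,1\}\}$, so $\chi(\Delta^f) = -1 \neq 1$, even though the Lefschetz sum is $\Tr_F(K_0) - \Tr_F(K_1) = 0 - (-1) = 1$, exactly as acyclicity predicts.

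What saves the argument everywhere it is actually used is an additional hypothesis: that $\Delta^f$ is a subcomplex of $\Delta$, equivalently that $f$ fixes pointwise the vertices of every simplex it stabilizes. Under that hypothesis $\hat{f}_{\mid Q}$ is the identity for every $Q \in \Delta^f$, every sign is $+1$, the trace-count identity is restored, and the rest of your argument goes through verbatim. This is precisely the hypothesis imposed in Theorem~\ref{nonabelianfixedpointtheorem} and manufactured in general by passing to the barycentric subdivision (Lemma~\ref{barylemma}, Theorem~\ref{fpt}). So the missing ingredient is not an idea but a hypothesis; a careful proof should state the subcomplex condition and verify the $+1$ signs explicitly rather than appeal to a sign convention and hope for the best.
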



\section{A Nonabelian Fixed-Point Theorem}\label{nonabeliansection}

In this part of the text we will prove a nonabelian fixed-point theorem which is
attributed to R.~Oliver \cite{oliver1975}.

Let $\Delta$ be a collapsible abstract simplicial complex.  Let $G$ be a finite group
which acts on $\Delta$ via simplicial automorphisms. By
Corollary~\ref{lefschetzcorollary}, we know that for any element $g \in G$, there must be
a simplex $Q \in \Delta$ such that $g \left ( Q \right) = Q$. We will prove that, under
certain conditions, a stronger statement can be made: there must exist a single
simplex~$Q$ which is stabilized by all the elements of $G$.

Our method of proof for this result is essentially an inductive one.  We require that the
automorphism group $G$ has a certain filtration by subgroups,
\begin{eqnarray}
\{ 0 \} = G_0 \subset G_1 \subset G_2 \subset \ldots \subset G_r = G,
\end{eqnarray}
and we inductively deduce conditions on the $G_i$-fixed subsets of $\Delta$, for $i = 0,
1, 2, \ldots, r$.  The key to this argument is the first result that we will prove,
Proposition~\ref{acyclicitypreservation}, which tells us that the property of
``$\mathbb{F}_p$-acyclicity'' can be carried forward along this filtration.  The proof of
Proposition~\ref{acyclicitypreservation} is the most difficult part of the argument; once
that proposition is proved, the other elements of the argument fall into place easily.

 \enlargethispage{6pt}

For now, we will be focusing our attention on simplicial automorphisms $f \colon \Delta
\to \Delta$ for which $\Delta^f$ \textit{is} a subcomplex of $\Delta$.  That is, we will
be focusing on those maps $f$ satisfying the condition
\begin{eqnarray}
Q \in \Delta^f  \textnormal{ and } Q' \subseteq Q \Longrightarrow
Q' \in \Delta^f.
\end{eqnarray}
for any $Q, Q' \in \Delta$.  Geometrically, what this condition implies is that if $f$
stabilizes a simplex $Q$, then it also fixes all of the vertices of $Q$.

The \textbf{order} of a simplicial automorphism $f \colon \Delta \to \Delta$ is the least
$n \geq 1$ such that $f^n$ is the identity.  (If no such $n$ exists, then the order of
$f$ is $\infty$.)

\begin{proposition}\label{acyclicitypreservation}
Let $\Delta$ be a finite abstract simplicial complex with ordered vertices.   Let $p$ be
a prime, and suppose that $\Delta$ is $\mathbb{F}_p$-acyclic. Suppose that $f \colon
\Delta \to \Delta$ is an order-$p$ automorphism of $\Delta$ such that $\Delta^f$ is a
subcomplex of $\Delta$.  Then, the complex $\Delta^f$ must be $\mathbb{F}_p$-acyclic.
\end{proposition}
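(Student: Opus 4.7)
The plan is a Smith-theoretic argument. I will work with the chain complex $K := K_\bullet(\Delta, \mathbb{F}_p)$ equipped with the chain map $F$ induced by $f$, together with the operators $\sigma := 1 + F + F^2 + \cdots + F^{p-1}$ and $\tau := 1 - F$. Both $\sigma$ and $\tau$ commute with the differential (they are built from $F$, which is a chain map), and $F^p = \mathrm{id}$ as a chain map by functoriality of the sign assignment in Definition~\ref{chainmapdef} applied to $f^p = \mathrm{id}$. Consequently $\sigma \tau = \tau \sigma = 1 - F^p = 0$.

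My first step is to pin down the subcomplex structure. Let $T := K_\bullet(\Delta^f, \mathbb{F}_p)$, which is a chain subcomplex of $K$ because $\Delta^f$ is a subcomplex of $\Delta$; moreover, since $\Delta^f$ being a subcomplex forces $f$ to fix every vertex of every simplex in $\Delta^f$, the sign $\sign(\hat f_{\mid Q})$ equals $+1$ for each such $Q$, so $F$ acts as the identity on $T$. The simplices outside $\Delta^f$, by contrast, fall into $f$-orbits of size exactly $p$, since $p$ is prime. A direct orbit-by-orbit computation then produces the two decompositions of chain subcomplexes
\[ \ker \tau \;=\; T \,\oplus\, \sigma K, \qquad \ker \sigma \;=\; T \,\oplus\, \tau K, \]
where the summands intersect trivially because $\sigma$ and $\tau$ both vanish on $T$ (using $p \equiv 0$ in $\mathbb{F}_p$), while $\sigma K$ and $\tau K$ are supported entirely on non-fixed simplices. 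These decompositions give rise to the short exact sequences of chain complexes
\[ 0 \to T \oplus \sigma K \to K \xrightarrow{\tau} \tau K \to 0, \]
\[ 0 \to T \oplus \tau K \to K \xrightarrow{\sigma} \sigma K \to 0. \]

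Next I would feed these into the long exact sequence of Proposition~\ref{snakelemmaprop}. Writing $a_n, b_n, c_n$ for the $\mathbb{F}_p$-dimensions of $H_n(T)$, $H_n(\sigma K)$, $H_n(\tau K)$, and using $H_0(K) = \mathbb{F}_p$, $H_n(K) = 0$ for $n \geq 1$ (from the $\mathbb{F}_p$-acyclicity of $\Delta$), the two long exact sequences yield, for every $n \geq 1$,
\[ c_{n+1} = a_n + b_n, \qquad b_{n+1} = a_n + c_n. \]
Subtracting gives $c_{n+1} - b_{n+1} = -(c_n - b_n)$; since $K$ is a finite complex, $b_n$ and $c_n$ vanish for $n$ large, so by alternation $b_n = c_n$ throughout. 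Then $a_n = c_{n+1} - c_n$, and the telescoping identity $\sum_{n \geq 1} a_n = -c_1 \leq 0$ forces $a_n = 0$ for every $n \geq 1$ (and $c_1 = 0$). Tracking the $H_0$-ends of the two sequences gives the further low-degree equation $a_0 + 2 b_0 = 1$ together with $b_0 = c_0$, which pins down $a_0 = 1$ and $b_0 = c_0 = 0$. This is precisely the statement that $\widetilde{H}_n(\Delta^f, \mathbb{F}_p) = 0$ for every $n \geq 0$.

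The main obstacle is the orbit-by-orbit verification of the two direct-sum decompositions of $\ker \tau$ and $\ker \sigma$ as chain subcomplexes; this is the step where both the primality of $p$ (so that $f$-orbits have size $1$ or $p$ and that $p \equiv 0$ annihilates $\sigma$ on $T$) and the subcomplex hypothesis on $\Delta^f$ (which eliminates sign twists in the $F$-action on $T$) are used in an essential way. Once those decompositions are in hand, everything reduces to routine bookkeeping with the two long exact sequences.
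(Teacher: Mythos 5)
Your proposal is essentially the Smith-theoretic argument the paper itself uses: the same operators $\sigma = \mathbb{I} + F + \cdots + F^{p-1}$ and $\tau = \mathbb{I} - F$ (the paper calls $\tau$ ``$\delta$''), the same two chain-level decompositions of $\ker\sigma$ and $\ker\tau$ obtained orbit-by-orbit (which the paper writes out as explicit bases for $\im\sigma$, $\im\delta$, $\ker\sigma$, $\ker\delta$), and the same pair of short exact sequences fed into Proposition~\ref{snakelemmaprop}. Where you diverge is only in the final bookkeeping: the paper runs a descending induction from the top dimension $c$ of $\Delta$ downward, knocking out $H_n(\ker\sigma)$, $H_n(\ker\delta)$, $H_n(\im\sigma)$, $H_n(\im\delta)$ one degree at a time, whereas you set up the recurrences $c_{n+1} = a_n + b_n$, $b_{n+1} = a_n + c_n$ for $n\geq 1$ and solve them by alternation and telescoping. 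Both routes correctly yield $a_n = 0$ for $n\geq 1$ once the decompositions are in hand.

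There is, however, one genuine gap at degree zero. You assert that ``tracking the $H_0$-ends of the two sequences gives \ldots $b_0 = c_0$,'' but it does not. Each of the two long exact sequences truncates, after using $H_1(K)=0$ and your deduction $b_1 = c_1 = 0$, to a four-term exact sequence $0 \to H_1(\cdot K) \to H_0(\ker\cdot) \to H_0(K) \to H_0(\cdot K) \to 0$, and both collapse to the \emph{same} relation $a_0 + b_0 + c_0 = 1$; they give no independent constraint forcing $b_0 = c_0$. In particular, your equations as stated do not rule out $(a_0, b_0, c_0) = (0,1,0)$ or $(0,0,1)$, i.e.\ they do not rule out $\Delta^f = \emptyset$. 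The missing ingredient is exactly what the paper invokes at the outset of its proof: Theorem~\ref{acyclicimplieslft} (the Lefschetz fixed-point theorem) guarantees $\Delta^f \neq \emptyset$, hence $a_0 = \dim H_0(\Delta^f,\mathbb{F}_p) \geq 1$. With that extra input, $a_0 + b_0 + c_0 = 1$ in nonnegative integers forces $a_0 = 1$ and $b_0 = c_0 = 0$, closing the argument. So the fix is to cite the Lefschetz theorem for nonemptiness of $\Delta^f$ rather than to claim $b_0 = c_0$ from the exact sequences alone.
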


\begin{proof}
Suppose that $\Delta$ is $\mathbb{F}_p$-acyclic. We know, by
Theorem~\ref{acyclicimplieslft}, that the subcomplex $\Delta^f$ must be nonempty.  To
prove the proposition, we must show that the homology groups $H_n \left( \Delta^f ,
\mathbb{F}_p \right)$ are trivial for $n > 0$, and that $H_0 \left( \Delta^f ,
\mathbb{F}_p \right)$ is one-dimensional.

The proof that follows is based on the paper ``Fixed-point theorems for periodic
transformations'' by Smith~\cite{smith1941}.  The approach of the proof is to
define some special subcomplexes of $K_\bullet \left( \Delta , \mathbb{F}_p \right)$ and
then exploit relationships between these subcomplexes.

Let
\begin{eqnarray}
F \colon K_\bullet \left( \Delta , \mathbb{F}_p \right) \to
K_\bullet \left( \Delta , \mathbb{F}_p \right)
\end{eqnarray}
denote the chain map associated with $f$.  Note that since $F$ is a map of
complexes, any linear combination of the maps $F , F^2, F^3, \ldots$ is also a map of
complexes. Define
\begin{eqnarray}
\delta \colon K_\bullet \left( \Delta , \mathbb{F}_p \right) \to K_\bullet \left( \Delta
, \mathbb{F}_p \right)
\end{eqnarray}
by
\begin{eqnarray}
\delta = \mathbb{I} - F.
\end{eqnarray}
(Here $\mathbb{I}$ denotes the identity map.)  Define
\begin{eqnarray}
\sigma \colon K_\bullet \left( \Delta , \mathbb{F}_p \right) \to K_\bullet \left( \Delta
, \mathbb{F}_p \right)
\end{eqnarray}
by
\begin{eqnarray}
\sigma = \mathbb{I} + F + F^2 + \ldots + F^{p-1}.
\end{eqnarray}

The maps $\delta$ and $\sigma$ determine four subcomplexes of $K_\bullet \left( \Delta ,
\mathbb{F}_p \right)$:
\begin{eqnarray}
( \im \delta), (\ker\delta), (\im \sigma),\quad \textnormal{and}\quad (\ker \sigma).
\end{eqnarray}
We can describe these four complexes explicitly.  Let $\Delta' = \Delta \smallsetminus
\Delta^f$. Let $S \subseteq \Delta'$ be a set which contains exactly one element from
every $f$-orbit in $\Delta'$.  Then the following assertions hold (as the reader may
verify):
\begin{itemize}
\item The set
\begin{eqnarray*}
\left\{ \sum_{i=0}^{p-1} F^i \left( Q \right) \mid Q \in S \right\}
\end{eqnarray*}
is a basis\footnote{When we say that a set $T$ is a basis for a complex $X_\bullet$, we
mean that $T$ is a union of bases for the vector spaces $\{ X_i \}$.} for $\left( \im
\sigma \right)$.
\item The set
\begin{eqnarray*}
\left\{  F^i ( Q) - F^{i+1} ( Q ) \mid
Q \in S , 0 \leq i \leq p-2 \right\}
\end{eqnarray*}
is a basis for $\left( \im \delta \right)$.
\item The set
\begin{eqnarray*}
\left\{  F^i ( Q) - F^{i+1} ( Q ) \mid
Q \in S , 0 \leq i \leq p-2 \right\} \cup
\left\{ Q \mid Q \in \Delta^f \right\}
\end{eqnarray*}
is a basis for $\left( \ker \sigma \right)$.
\item The set
\begin{eqnarray*}
\left\{ \sum_{i=0}^{p-1} F^i \left( Q \right) \mid
Q \in S \right\}
 \cup
\left\{ Q \mid Q \in \Delta^f \right\}
\end{eqnarray*}
is a basis for $\left( \ker \sigma \right)$.
\end{itemize}
From these bases, we can see that there are the following isomorphisms of complexes:
\begin{eqnarray}\label{keyisomorphism1}
\left( \ker \sigma \right) \cong \left( \im \delta \right)
 \oplus K_\bullet \left( \Delta^f , \mathbb{F}_p \right)
\end{eqnarray}
and
\begin{eqnarray}\label{keyisomorphism2}
\left( \ker \delta \right) \cong \left( \im \sigma \right)
\oplus K_\bullet \left( \Delta^f , \mathbb{F}_p \right).
\end{eqnarray}
These imply isomorphisms of homology groups:
\begin{eqnarray}\label{homiso1}
H_n ( \ker \sigma ) \cong H_n ( \im \delta ) \oplus H_n ( \Delta^f , \mathbb{F}_p ), \\
\label{homiso2} H_n ( \ker \delta ) \cong H_n ( \im \sigma)
\oplus H_n ( \Delta^f , \mathbb{F}_p ).
\end{eqnarray}

Now, consider the exact sequences
\begin{eqnarray}
0 \to \left( \ker \sigma \right) \to K_\bullet \left( \Delta ,
\mathbb{F}_p \right) \to \left( \im \sigma \right) \to 0,  \\
0 \to \left( \ker \delta \right) \to K_\bullet \left( \Delta ,
\mathbb{F}_p \right) \to \left( \im \delta \right) \to 0
\end{eqnarray}
By Proposition~\ref{snakelemmaprop}, these imply the existence of two long exact
sequences:
\begin{eqnarray*}
&& \ldots \to H_{n+1} ( \im \sigma ) \to H_n ( \ker \sigma ) \to H_n ( \Delta ,
\mathbb{F}_p ) \to H_n ( \im \sigma )\\
&&\qquad \to H_{n-1} ( \ker \sigma ) \to \ldots  \\[6pt]
&& \ldots \to H_{n+1} ( \im \delta ) \to H_n ( \ker \delta ) \to H_n ( \Delta ,
\mathbb{F}_p ) \to H_n ( \im \delta )\\
&&\qquad \to H_{n-1} ( \ker \delta ) \to \ldots .
\end{eqnarray*}
Let us step through the terms in these sequences, starting from the left.  Let $c$ be the
dimension of the complex $\Delta$ (that is, the dimension of the largest simplex in
$\Delta$).  The exact sequences take the form
\begin{eqnarray*}
\ldots \longrightarrow 0 \longrightarrow H_c ( \ker \sigma )
\to H_c ( \Delta , \mathbb{F}_p ) \to H_c ( \im \sigma )
\to H_{c-1} ( \ker \sigma ) \to \ldots  \\[6pt]
\ldots \longrightarrow 0 \longrightarrow H_c ( \ker \delta ) \to H_c ( \Delta ,
\mathbb{F}_p ) \to H_c ( \im \delta ) \to H_{c-1} ( \ker \delta ) \to \ldots .
\end{eqnarray*}
Since $\Delta$ is acyclic, we know that $H_c \left( \Delta ,\mathbb{F}_p \right) = \{ 0
\}$, which clearly implies that both $H_c \left( \ker \sigma \right)$ and $H_c \left(
\ker \delta \right)$ are zero.  So the exact sequences take the form
\begin{eqnarray*}
\ldots \longrightarrow 0 \longrightarrow 0 \longrightarrow 0
\longrightarrow H_c ( \im \sigma )
\to H_{c-1} ( \ker \sigma ) \to \ldots  \\[6pt]
\ldots \longrightarrow 0 \longrightarrow 0 \longrightarrow 0 \longrightarrow H_c ( \im
\delta ) \to H_{c-1} ( \ker \delta ) \to \ldots
\end{eqnarray*}
But isomorphisms \eqref{homiso1} and \eqref{homiso2} imply that $H_c \left( \im \sigma
\right)$ and $H_c \left( \im \delta \right)$ are also zero.  So the exact sequences are
like so:
\begin{eqnarray*}
\ldots \longrightarrow 0 \longrightarrow 0 \longrightarrow 0
\longrightarrow 0
\longrightarrow H_{c-1} ( \ker \sigma ) \to \ldots  \\[6pt]
\ldots \longrightarrow 0 \longrightarrow 0 \longrightarrow 0 \longrightarrow 0
\longrightarrow H_{c-1} ( \ker \delta ) \to \ldots
\end{eqnarray*}
We can apply the same reasoning to show that all terms in the sequences with index
$(c-1)$ are likewise zero. Continuing in this manner, we eventually find that
\textit{all} the homology groups in the sequences that have a positive index are zero.
We are left with the exact sequences in the following form:
\begin{eqnarray}\label{finallongexactsequence}
\ldots \longrightarrow 0 \longrightarrow 0 \longrightarrow
H_0 \left( \ker \sigma \right) \to H_0 \left( \Delta , \mathbb{F}_p
\right) \to H_0 \left( \im \sigma \right) \longrightarrow 0\qquad\quad \\[6pt]
\ldots \longrightarrow 0 \longrightarrow 0 \longrightarrow H_0 \left( \ker \delta \right)
\to H_0 \left( \Delta , \mathbb{F}_p \right) \to H_0 \left( \im \delta \right)
\longrightarrow 0\qquad\quad
\end{eqnarray}

We have shown that all of the homology groups $H_n \left( \ker \sigma \right)$, $n > 0$
are trivial.  This implies by isomorphism~(\ref{homiso1}) that $H_n \left( \Delta^f ,
\mathbb{F}_p \right)$ is trivial for all $n > 0$.  Also, we know from
isomorphism~(\ref{homiso1}) and sequence~(\ref{finallongexactsequence}) that
\begin{eqnarray}
\dim H_0 \left( \Delta^f , \mathbb{F}_p \right) \leq \dim H_0 \left( \ker \sigma \right)
\leq \dim H_0 \left( \Delta , \mathbb{F}_p \right) = 1.
\end{eqnarray}
The dimension of $H_0 \left( \Delta^f , \mathbb{F}_p \right)$ cannot be zero (since
$\Delta^f$ is nonempty).  So $H_0 \left( \Delta^f , \mathbb{F}_p \right)$ must be
one-dimensional.  Therefore, $\Delta^f$ is\break $\mathbb{F}_p$-acyclic.
\end{proof}

\begin{corollary}\label{pgroupactioncorollary}
Suppose that $H$ is a group of order $p^m$, with $m \geq 1$, which acts on $\Delta$ in
such a way that $\Delta^h$ is a subcomplex of $\Delta$ for any $h \in H$.  Then,
$\Delta^H$ is $\mathbb{F}_p$-acyclic.
\end{corollary}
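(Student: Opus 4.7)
The plan is to prove this corollary by induction on $m$, with the induction driven by Proposition~\ref{acyclicitypreservation} together with the fact that every finite $p$-group has a nontrivial center.

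For the base case $m = 1$, the group $H$ is cyclic of order $p$, so $H = \langle f \rangle$ for some automorphism $f$ of order $p$. Then $\Delta^H = \Delta^f$, and by hypothesis $\Delta^f$ is a subcomplex of $\Delta$. Applying Proposition~\ref{acyclicitypreservation} directly (using that $\Delta$ is $\mathbb{F}_p$-acyclic) gives that $\Delta^f$ is $\mathbb{F}_p$-acyclic, as required.

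For the inductive step, suppose the result holds for all groups of order $p^{m-1}$, and let $H$ have order $p^m$ with $m \geq 2$. Since $H$ is a nontrivial $p$-group, its center $Z(H)$ is nontrivial, so I can pick an element $f \in Z(H)$ of order $p$. Let $N = \langle f \rangle$, a normal subgroup of $H$ of order $p$. By the base case (equivalently by Proposition~\ref{acyclicitypreservation}), the subcomplex $\Delta^f$ is $\mathbb{F}_p$-acyclic. Now I want the quotient $H/N$, which has order $p^{m-1}$, to act on $\Delta^f$ satisfying the subcomplex-fixed-set hypothesis, so that the inductive hypothesis applies. Because $f$ is central, every $h \in H$ commutes with $f$, so $h$ preserves $\Delta^f$; and $f$ itself acts trivially on $\Delta^f$, so the $H$-action descends to an action of $H/N$ on $\Delta^f$. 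For any coset $\bar h \in H/N$ with lift $h$, the fixed set $(\Delta^f)^{\bar h}$ equals $\Delta^f \cap \Delta^h$, which is an intersection of two subcomplexes of $\Delta$ and hence is a subcomplex of $\Delta^f$. By the inductive hypothesis applied to the action $H/N \circlearrowleft \Delta^f$, the complex $(\Delta^f)^{H/N}$ is $\mathbb{F}_p$-acyclic.

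It remains to identify $(\Delta^f)^{H/N}$ with $\Delta^H$. A simplex $Q$ lies in $(\Delta^f)^{H/N}$ iff $Q \in \Delta^f$ and $h(Q) = Q$ for every $h \in H$; since $f(Q) = Q$ is already enforced by $Q \in \Delta^f$ and every element of $H$ is a product of $f$-powers and lifts, this is equivalent to $Q \in \Delta^H$. This completes the induction.

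The main obstacle I anticipate is verifying that the inductive setup is legitimate: one must check that $\Delta^f$ inherits all the structure Proposition~\ref{acyclicitypreservation} and the inductive hypothesis require (finiteness, vertex ordering inherited from $\Delta$, and the $\mathbb{F}_p$-acyclicity that licenses a further application of the proposition at the next stage), and that the fixed-subcomplex condition is genuinely preserved when restricting the action from $\Delta$ to $\Delta^f$. The use of centrality of $f$ is essential here, both to make the $H$-action descend to $H/N$ and to ensure that $h$ stabilizes $\Delta^f$ setwise so that the fixed sets in $\Delta^f$ coincide with the fixed sets of lifts $h$ intersected with $\Delta^f$.
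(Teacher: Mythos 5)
Your proof is correct and is essentially the same as the paper's, which inducts along a filtration $\{0\} = H_0 \subset H_1 \subset \cdots \subset H_m = H$ with $H_i/H_{i-1}$ cyclic of order $p$ and applies Proposition~\ref{acyclicitypreservation} at each step via the identity $\Delta^{H_i} = (\Delta^{H_{i-1}})^{a_i}$. Your top-down version --- quotienting by a central element of order $p$ and passing to the induced $H/N$-action on $\Delta^f$ --- is the same induction run in the opposite direction, with the minor merit of spelling out the verifications (that the action descends to the quotient, that fixed sets remain subcomplexes, and that $(\Delta^f)^{H/N} = \Delta^H$) which the paper leaves implicit.
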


\begin{proof}
Since $\left| H \right| = p^m$, there exists a filtration of $H$ by normal subgroups,
\begin{eqnarray}
\{ 0 \} = H_0 \subset H_1 \subset \ldots \subset H_m = H
\end{eqnarray}
such that $H_i / H_{i-1} \cong \mathbb{Z} / p \mathbb{Z}$ for any $i \in \{ 1, 2, \ldots,
m \}$.  (See Chapter~I, Corollary~6.6 in \cite{lang}.)  For any $i \in \{ 1, 2, \ldots ,
m \}$, we can choose an element $a_i \in H_i$ which generates $H_i / H_{i-1}$.  Then,
\begin{eqnarray}
\Delta^{H_i} = \left( \Delta^{H_{i-1}} \right)^{a_i}.
\end{eqnarray}
By Proposition~\ref{acyclicitypreservation}, if $\Delta^{H_{i-1}}$ is
$\mathbb{F}_p$-acyclic, so is $\Delta^{H_i}$.  The corollary follows by induction.
\end{proof}

Now we are ready to prove the main theorem.

\begin{theorem}\label{nonabelianfixedpointtheorem}
Let $G$ be a finite group satisfying the following \hbox{condition}:
\begin{itemize}
\item There is a normal subgroup $G' \subseteq G$ such that
$\left| G' \right|$ is a prime power and $G / G'$ is cyclic.
\end{itemize}
Let $\Delta$ be a collapsible abstract simplicial complex on which $G$ acts, satisfying
the condition that $\Delta^g$ is a simplicial complex for any $g \in G$.  Then, $\chi (
\Delta^G ) = 1$.
\end{theorem}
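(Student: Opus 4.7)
The plan is to reduce this statement to a single cyclic-generator computation on an intermediate fixed-point complex, and then invoke the Euler-characteristic form of the Lefschetz theorem from Section~\ref{lftsection}.

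First I would let $p$ be the prime with $|G'| = p^m$. Since $\Delta$ is collapsible, Theorem~\ref{acyclicitytheorem} gives that $\Delta$ is $\mathbb{F}_p$-acyclic. The restriction of the action to $G'$ still satisfies the hypothesis that $\Delta^h$ is a subcomplex for every $h \in G'$, so Corollary~\ref{pgroupactioncorollary} applies and shows that $\Delta^{G'}$ is itself $\mathbb{F}_p$-acyclic, inheriting the induced vertex ordering from $\Delta$.

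Next I would use normality of $G'$ to transfer the $G$-action to $\Delta^{G'}$. For any $g \in G$, any $h \in G'$, and any $Q \in \Delta^{G'}$, the identity $h(g(Q)) = g((g^{-1}hg)(Q)) = g(Q)$, which holds because $g^{-1}hg \in G'$ fixes $Q$, shows that $g(\Delta^{G'}) = \Delta^{G'}$. Because $G/G'$ is cyclic, I can then pick a single element $g \in G$ whose coset generates $G/G'$, so that $G = G' \cdot \langle g \rangle$. A simplex of $\Delta^{G'}$ is $G$-fixed if and only if it is $g$-fixed, which gives the equality $\Delta^G = (\Delta^{G'})^g$; and this common set is a subcomplex of $\Delta^{G'}$, because $\Delta^g$ is a subcomplex of $\Delta$ by hypothesis and $\Delta^G$ is the intersection of the subcomplexes $\Delta^x$ for $x \in G$.

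Finally, I would apply the Euler-characteristic version of the Lefschetz theorem at the end of Section~\ref{lftsection} to the simplicial automorphism $g \colon \Delta^{G'} \to \Delta^{G'}$ of the $\mathbb{F}_p$-acyclic complex $\Delta^{G'}$, concluding $\chi((\Delta^{G'})^g) = 1$, which is exactly $\chi(\Delta^G) = 1$. The deep content has already been absorbed into Proposition~\ref{acyclicitypreservation} and its corollary, so no further topology is needed. The only genuine obstacle in this plan is the group-theoretic bookkeeping verifying that $g$ preserves $\Delta^{G'}$ and that $(\Delta^{G'})^g$ coincides with $\Delta^G$ as subcomplexes; both hinge on $G'$ being normal and $G/G'$ being cyclic, and are exactly the two places where the hypothesis on $G$ is used.
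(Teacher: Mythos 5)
Your proof is correct and follows essentially the same route as the paper: collapsibility gives $\mathbb{F}_p$-acyclicity (Theorem~\ref{acyclicitytheorem}), Corollary~\ref{pgroupactioncorollary} carries acyclicity to $\Delta^{G'}$, and then one applies the Euler-characteristic form of the Lefschetz theorem to a generator $g$ of $G/G'$ acting on $\Delta^{G'}$, using the identification $\Delta^G = (\Delta^{G'})^g$. The one small service you provide over the paper's terse version is spelling out why normality of $G'$ makes $g$ an automorphism of $\Delta^{G'}$ and why $G = G'\langle g\rangle$ yields $(\Delta^{G'})^g = \Delta^G$; the paper leaves those two verifications implicit.
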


\begin{proof}
We are given that $\left| G' \right| = p^m$ for some prime $p$ and $m \geq 0$.  Choose a
total ordering on the vertices of $\Delta$. By Theorem~\ref{acyclicitytheorem}, $\Delta$
is $\mathbb{F}_p$-acyclic. By Corollary~\ref{pgroupactioncorollary}, $\Delta^{G'}$ is
$\mathbb{F}_p$-acyclic.

Choose an element $b \in G$ which generates $G/G'$.  By Theorem~\ref{acyclicimplieslft},
the complex
\begin{eqnarray}
(\Delta^{G'})^b = \Delta^G
\end{eqnarray}
has Euler characteristic equal to $1$.
\end{proof}

Note that Theorem~\ref{nonabelianfixedpointtheorem} implies in particular that the
invariant subcomplex $\Delta^G$ is nonempty.

\section{Barycentric Subdivision}\label{barycentricsubdivisionsection}

In \textit{\nameref{nonabeliansection}} we proved
Theorem~\ref{nonabelianfixedpointtheorem}, which asserts that if a group action $G
\circlearrowleft \Delta$ satisfies certain requirements, then $\Delta^G$ must be
nonempty. The theorem as stated is unfortunately not general enough for our purposes.
Indeed the condition that all of the subsets $\{ \Delta^g \mid g \in G \}$ are
subcomplexes will not be satisfied by the simplicial complexes arising from graph
properties, except in trivial cases.  Therefore we need a theorem which can be applied to
group actions that do not satisfy this condition.

\begin{figure}[!b]
 \centerline{\includegraphics{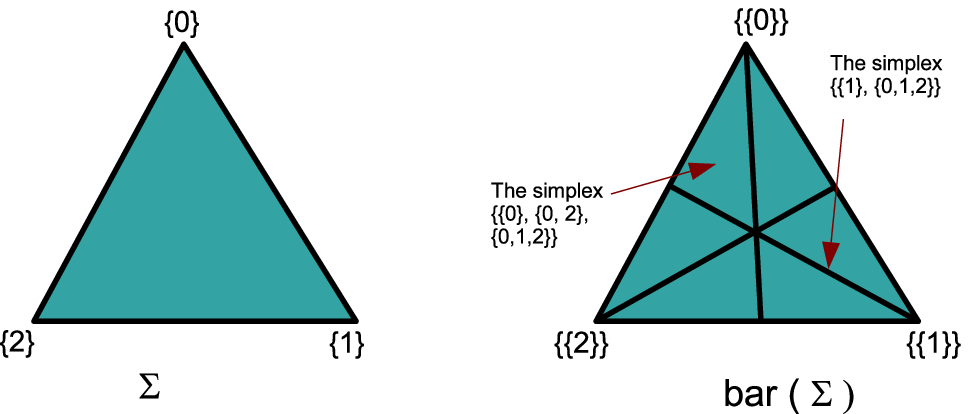}}
\fcaption{The complexes $\Sigma$ and $\bar ( \Sigma )$.\label{barycentricfig}}
\end{figure}

Barycentric subdivision is a process of dividing up the simplicies in a simplicial
complex into smaller simplicies.  Barycentric subdivision replaces an abstract simplicial
complex $\Delta$ with a larger complex $\Delta'$ that has similar properties.  The
advantage of this construction is that for any simplicial automorphism $g \colon \Delta
\to \Delta$, there is an induced automorphism $g \colon \Delta' \to \Delta'$ which
satisfies the condition that $(\Delta' )^g$ is an abstract simplicial complex. Working
within this larger complex will allow us to prove a generalization of
Theorem~\ref{nonabelianfixedpointtheorem}.

\begin{definition}\label{barycentricdef}
Let $\Delta$ be an abstract simplicial complex.  Then the \textbf{barycentric subdivision
of $\Delta$}, denoted $\bar ( \Delta )$, is the simplicial \hbox{complex}
\begin{eqnarray*}
\bar(\Delta ) = \big\{ \{ Q_1, Q_2, \ldots , Q_r \} \mid r \geq 1, Q_i \in \Delta , Q_1
\subset Q_2 \subset \ldots \subset Q_r \big\}.
\end{eqnarray*}
\end{definition}

Here is another way to phrase the above definition. Let $\Delta$ be an abstract
simplicial complex.  Then the subset relation $\subset$ gives a partial ordering on the
elements of $\Delta$.  The complex $\bar ( \Delta )$ is the set of all $\subset$-chains
in $\Delta$.

As an example, let
\begin{eqnarray}\label{baryexample}
\Sigma = \left\{ \{ 0 \} , \{ 1 \} , \{ 2 \} , \{ 0, 1 \} , \{ 1 ,2 \} , \{ 0, 2 \} , \{
0, 1, 2 \} \right\}\!.
\end{eqnarray}
The complex $\Sigma$ and its barycentric subdivision $\bar ( \Sigma )$ are shown in
Figure~\ref{barycentricfig}.

Geometrically, the operation $[ \Delta \mapsto \bar ( \Delta ) ]$ has the effect of
splitting every simplex of dimension $n$ in $\Delta$ into $(n+1)!$ simplicies of
\hbox{dimension~$n$}. Note that vertices in $\bar ( \Delta )$ are in one-to-one
correspondence with the simplicies of $\Delta$.  Figure~\ref{barycentric2fig} shows
another example of barycentric subdivision.

\begin{figure}
 \centerline{\includegraphics{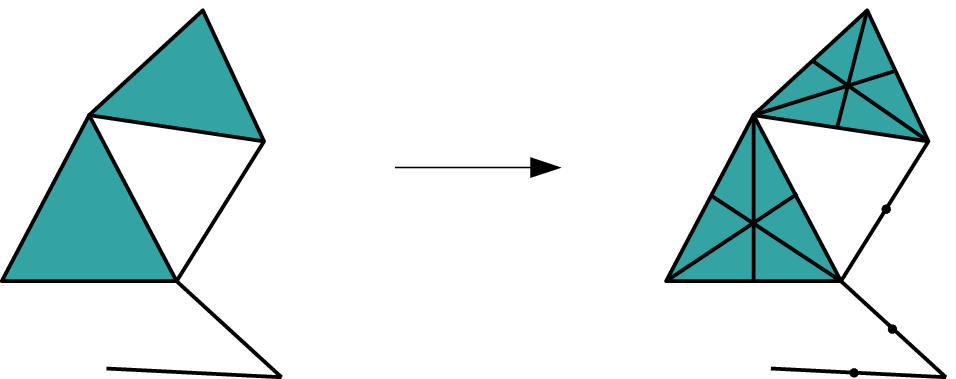}}
\fcaption{An example of barycentric subdivision.\label{barycentric2fig}}
\end{figure}

As the reader can observe, the simplicial complex $\bar ( \Delta )$ has some similarities
with the original simplicial complex $\Delta$. It can be shown that the
homology groups of $\bar ( \Delta )$ are isomorphic to those of $\Delta$, although we
will not need to prove that here.  The following propositions are proved in the Appendix
(as Proposition~\ref{baryeulerappendixprop} and Proposition~\ref{baryappendixprop}).

\begin{proposition}\label{barycollapseprop}
Let $\Delta$ be an abstract simplicial complex. If $\Delta$ is collapsible, the $\bar (
\Delta )$ is also collapsible.
\end{proposition}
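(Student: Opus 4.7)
The plan is to induct on the length of a collapsing sequence for $\Delta$, having first reduced to the case of primitive elementary collapses. Since any elementary collapse decomposes into a sequence of primitive ones (as noted in the proof of Theorem~\ref{acyclicitytheorem}), I may assume $\Delta$ admits a collapsing sequence $\Delta = \Delta_0, \Delta_1, \ldots, \Delta_n$ with $|\Delta_n|=1$ consisting only of primitive collapses. The base case is trivial, since for $|\Delta|=1$ the subdivision $\bar(\Delta)$ is again a single vertex. The inductive step reduces to showing: if $\Delta' = \Delta \smallsetminus \{\alpha,\beta\}$ results from a single primitive collapse (so $\beta$ is a free face of the maximal simplex $\alpha$ with $\dim \alpha = \dim \beta + 1$), then $\bar(\Delta)$ can be collapsed down to $\bar(\Delta')$. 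Combining this with the collapse of $\bar(\Delta')$ supplied by the inductive hypothesis then finishes the proof.

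The simplices of $\bar(\Delta) \smallsetminus \bar(\Delta')$ are precisely those chains $\{Q_1 \subset \cdots \subset Q_r\}$ in $\Delta$ with some $Q_i$ equal to $\alpha$ or $\beta$. Since $\alpha$ is maximal in $\Delta$ and, by primitivity and freeness, $\alpha$ is the only simplex of $\Delta$ strictly containing $\beta$, these ``extra'' chains fall into three explicit families: (A) chains ending at $\alpha$ with $\beta$ absent, (B) chains ending at $\beta$, and (C) chains of the form $Q_1 \subset \cdots \subset \beta \subset \alpha$. The first batch of elementary collapses pairs each Type B chain $\gamma$ with its Type C partner $\gamma \cup \{\alpha\}$, performed in order of decreasing chain length. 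The key point is that once the longer Type B chains have been eliminated, a remaining Type B chain $\gamma$ has $\gamma \cup \{\alpha\}$ as its unique maximal over-simplex in the current complex: any upward extension of $\gamma$ past $\beta$ must go through $\alpha$, and any downward extension was destroyed at an earlier step. So each such pair is a genuine elementary collapse.

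After this sweep only Type A chains remain among the extras, and they form essentially the cone with apex $\{\alpha\}$ over the subdivision of the complex of proper faces of $\alpha$ other than $\beta$. The second batch of collapses must remove the Type A chains in pairs among themselves, in the spirit of the usual cone-to-apex collapse but arranged so as to leave the ``base'' $\bar(\Delta')$ intact. Concretely, singling out the unique vertex $v \in \alpha \smallsetminus \beta$, one matches each Type A chain $C$ with another Type A chain differing from $C$ by the insertion or deletion of a specific face of $\alpha$ determined by how the elements of $C$ interact with $v$. The main obstacle is to exhibit an ordering of these Type A pairs for which the chosen face of each pair is free at the moment of collapse; this is a routine but intricate verification, and depends crucially on the prior removal of Types B and C so that no chain involving $\beta$ remains to serve as a competing maximal over-simplex.
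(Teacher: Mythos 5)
Your first step matches the paper's Lemma on collapsing $\bar(\Pi_n)$ onto the cone $\alpha \star \bar(\Lambda_n)$: your Types B and C are exactly the chains containing $\beta$, and pairing each Type B chain $\gamma$ with $\gamma \cup \{\alpha\}$ in decreasing length order is precisely the paper's argument, and is correct. The divergence is in the second step, and there you have a genuine gap. The paper does \emph{not} match the Type A chains among themselves by any explicit rule; it instead inducts on the dimension $n$ of $\Delta$, observes that $\dim \Lambda_n = n-1$ and $\Lambda_n$ is collapsible, concludes by the inductive hypothesis that $\bar(\Lambda_n)$ is collapsible, and then applies the cone lemma (``if $\Sigma$ is collapsible, $t \star \Sigma$ collapses onto $\Sigma$'') to peel off the Type A chains. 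Your choice to induct on the length of the collapsing sequence gives you no such leverage: the inductive hypothesis tells you nothing about $\bar(\Lambda_n)$, so you are forced to produce the Type A matching by hand.

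That hand-built matching is where you stop short. You do not say which face is inserted or deleted (a workable rule is: toggle $P \cup \{v\}$, where $P$ is the largest element of the chain not containing $v$, with $P$ taken to be empty if none exists), and you wave off the acyclicity question as ``routine but intricate.'' It is not routine: one must exhibit an invariant that strictly decreases along $V$-paths of the matching (for the rule above, the size of the smallest chain element containing $v$ does the job), and without such an argument there is no guarantee that any ordering of the pairs yields a sequence of legitimate elementary collapses. You also identify the wrong obstruction as the ``crucial'' one --- the prior removal of Types B and C is easy; the real issue is that two Type A pairs of the same dimension can compete, and resolving that requires exactly the decreasing-invariant argument you omit. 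In short, your decomposition is sound and the approach can in principle be completed, but as written the hard half of the proof is missing, and switching to the paper's induction on dimension is what makes that half disappear.
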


\begin{proposition}\label{baryeulerprop}
Let $\Delta$ be a finite abstract simplicial complex. Then, $\chi ( \bar ( \Delta ) ) =
\chi ( \Delta )$.
\end{proposition}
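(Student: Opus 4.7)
The plan is to prove this by induction on the number of simplices in $\Delta$, peeling off maximal simplices one at a time and showing that each peel changes $\chi(\Delta)$ and $\chi(\bar(\Delta))$ by the same amount. The base case $\Delta = \emptyset$ is trivial.

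For the inductive step, I would pick a maximal simplex $Q \in \Delta$ of dimension $n$, and set $\Delta' = \Delta \smallsetminus \{Q\}$, which is still a simplicial complex because $Q$ has no proper supersets in $\Delta$. From the definition of Euler characteristic, $\chi(\Delta) = \chi(\Delta') + (-1)^n$. The key task is to show the analogous identity $\chi(\bar(\Delta)) = \chi(\bar(\Delta')) + (-1)^n$. Since $Q$ is maximal, the simplices in $\bar(\Delta) \smallsetminus \bar(\Delta')$ are exactly the chains $Q_1 \subsetneq Q_2 \subsetneq \ldots \subsetneq Q_r = Q$ that end at $Q$; a chain of length $r$ contributes a simplex of dimension $r-1$ to $\bar(\Delta)$. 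Writing $c_r(Q)$ for the number of such chains, I need to show
\begin{equation*}
\sum_{r \geq 1} (-1)^{r-1} c_r(Q) \;=\; (-1)^n.
\end{equation*}

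To compute this sum, I observe that $c_1(Q) = 1$ (the single chain $\{Q\}$), and for $r \geq 2$ the chains ending at $Q$ biject with chains of length $r-1$ in the boundary complex $\partial Q$ of proper nonempty subsets of $Q$ (just delete the top element). Hence the sum equals $1 - \chi(\bar(\partial Q))$. Now $\partial Q \subsetneq \Delta$ (since $Q$ itself is not in $\partial Q$), so the inductive hypothesis applies to give $\chi(\bar(\partial Q)) = \chi(\partial Q)$. A direct computation using $\sum_{j=0}^{n+1}(-1)^j\binom{n+1}{j}=0$ shows $\chi(\partial Q) = 1 - (-1)^n$, and therefore $1 - \chi(\bar(\partial Q)) = (-1)^n$, as required. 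Combined with the inductive hypothesis applied to $\Delta'$, this yields $\chi(\bar(\Delta)) = \chi(\bar(\Delta')) + (-1)^n = \chi(\Delta') + (-1)^n = \chi(\Delta)$.

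The main obstacle is identifying exactly which new simplices appear in $\bar(\Delta)$ when one maximal simplex is added, and showing that the signed count of these ``chains ending at $Q$'' collapses to $(-1)^n$; all of this rests on the auxiliary identity $\chi(\partial Q) = 1 - (-1)^n$, which is a clean binomial identity. Once this bookkeeping is set up correctly, the induction runs smoothly because both quantities satisfy the same peeling recurrence.
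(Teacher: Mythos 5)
Your proof is correct and rests on the same computational heart as the paper's (Proposition~\ref{baryeulerappendixprop}): the identity $\chi(\partial Q)=1-(-1)^n$ for the boundary of an $n$-simplex, transferred to $\chi(\bar(\partial Q))$ by the inductive hypothesis, which shows that removing a maximal $n$-simplex changes $\chi(\bar(\cdot))$ by exactly $(-1)^n$. The bookkeeping is organized a little differently. You induct on the total number of simplices and peel off one maximal simplex at a time, so the appeal to the inductive hypothesis for $\partial Q$ is automatic ($\partial Q\subseteq\Delta'$ has strictly fewer simplices than $\Delta$); the paper instead inducts on dimension and deletes all top-dimensional simplices in one pass before dropping to the $(n-1)$-skeleton. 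You also obtain the increment $(-1)^n$ by directly counting chains ending at $Q$ (the singleton chain contributes $1$, and dropping the top element bijects the longer chains with nonempty chains in $\partial Q$ with a sign flip, giving $1-\chi(\bar(\partial Q))$), whereas the paper recognizes $\bar(\Pi_n)$ as the cone $[0,\ldots,n]\star\bar(\Theta_n)$ and invokes Lemma~\ref{stareulerlemma}. The two counts are the same thing phrased differently; your version is self-contained, while the paper's reuses a cone lemma that it needs elsewhere in that appendix anyway.
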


Now, let us consider how this construction behaves under group actions. Let $f \colon
\Delta \to \Delta$ be a simplicial automorphism of $\Delta$.  Then there is an induced
simplicial automorphism,
\begin{eqnarray}
f \colon \bar ( \Delta ) \to \bar ( \Delta ).
\end{eqnarray}
The invariant subset $\bar ( \Delta )^f$ can be expressed like so:
\begin{eqnarray*}
\bar ( \Delta )^f  = \big\{\{ Q_1, Q_2, \ldots , Q_r \} \mid r \geq 1, Q_i \in \Delta^f ,
Q_1 \subset Q_2 \subset \ldots \subset Q_r \big\}.
\end{eqnarray*}

 \pagebreak

\noindent It is easy to see that this set is always a simplicial complex.  Thus the
following lemma holds true:
\begin{lemma}\label{barylemma}
Let $\Delta$ be an abstract simplicial complex, and let
$G \circlearrowleft \Delta$ be a group action on $\Delta$.  Then,
for any $g \in G$, the set
\begin{eqnarray}
\left( \bar ( \Delta ) \right)^g
\end{eqnarray}
is a subcomplex of $\bar ( \Delta )$.
\end{lemma}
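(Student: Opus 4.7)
My plan is to verify the subcomplex property of $\bar(\Delta)^g$ directly, using the explicit description already written out in the excerpt, namely that
\begin{eqnarray*}
\bar(\Delta)^g = \big\{\{Q_1,\ldots,Q_r\} \mid r\ge 1,\ Q_i \in \Delta^g,\ Q_1 \subset Q_2 \subset \cdots \subset Q_r\big\}.
\end{eqnarray*}
The first thing I would do is justify this formula carefully, since a priori an element of $\bar(\Delta)$ fixed by $g$ is only a chain that $g$ permutes setwise, not one whose members are individually $g$-fixed. The second step, once the formula is in hand, is essentially automatic closure under nonempty subsets.

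For the characterization: suppose $\{Q_1,\ldots,Q_r\}$ is a simplex of $\bar(\Delta)$ with $Q_1 \subsetneq Q_2 \subsetneq \cdots \subsetneq Q_r$, and that $g$ sends this set to itself. Then $g$ induces a permutation $\sigma$ of $\{1,\ldots,r\}$ with $g(Q_i)=Q_{\sigma(i)}$. Because $g$ is a simplicial automorphism of $\Delta$, it strictly preserves the subset relation on $\Delta$, so $Q_{\sigma(i)} \subsetneq Q_{\sigma(j)}$ whenever $i < j$. This forces $\sigma$ to be order-preserving on $\{1,\ldots,r\}$ and hence to be the identity, so $g(Q_i)=Q_i$ for each $i$. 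The reverse direction (a chain of individually $g$-fixed simplices is setwise fixed) is immediate from Definition~\ref{barycentricdef}.

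Given this, the subcomplex condition is straightforward: if $\{Q_1,\ldots,Q_r\} \in \bar(\Delta)^g$ and $T \subseteq \{Q_1,\ldots,Q_r\}$ is nonempty, then $T$ is a subcollection of a $\subset$-chain and is therefore itself a $\subset$-chain, whose elements all lie in $\Delta^g$; by the characterization, $T \in \bar(\Delta)^g$. I do not expect any genuine obstacle here. The only point that requires even momentary thought is the pointwise-fixing observation in the previous paragraph, which explains exactly why passing to the barycentric subdivision repairs the failure of $\Delta^h$ to be a subcomplex in the example of Figure~\ref{groupactionfig}: chains have no nontrivial order-preserving automorphisms, so a fixed chain must be fixed pointwise.
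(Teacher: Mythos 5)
Your proof is correct and follows the same route the paper sketches: establish the pointwise characterization of $\bar(\Delta)^g$ and then check closure under nonempty subsets of a chain. The paper merely asserts the characterization and calls the subcomplex property ``easy to see''; you have usefully filled in the one nontrivial point, that a $g$-invariant $\subset$-chain must be fixed pointwise because a strict total order admits no nontrivial order-preserving automorphism.
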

Lemma~\ref{barylemma} can be observed in the example complex $\Sigma$ which we discussed
above~\eqref{baryexample}. As we can see in Figure~\ref{barycentricfig}, any permutation
of the set $\{ 0, 1, 2 \}$ fixes a subcomplex of the complex $\bar ( \Sigma)$.

With the aid of barycentric subdivision, we can now prove the following fixed-point
theorem.

\begin{theorem}\label{fptinit}
Let $\Delta$ be a collapsible abstract simplicial complex.  Let $G \circlearrowleft
\Delta$ be a group action on $\Delta$.  Suppose that $G$ has a normal subgroup $G'$ which
is such that $\left| G' \right|$ is a prime power and $G / G'$ is cyclic.  Then, the set
$\Delta^G$ is nonempty.
\end{theorem}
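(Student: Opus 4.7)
The plan is to reduce immediately to Theorem~\ref{nonabelianfixedpointtheorem}, using barycentric subdivision as the bridge that supplies the missing hypothesis. Theorem~\ref{nonabelianfixedpointtheorem} has exactly the conclusion we want (in fact, the stronger statement $\chi(\Delta^G) = 1$), but it requires that $\Delta^g$ be a subcomplex for every $g \in G$, a condition we are not assuming here. Lemma~\ref{barylemma} tells us precisely that this condition \emph{always} holds once we pass to the barycentric subdivision $\bar(\Delta)$.

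First I would replace $\Delta$ with $\bar(\Delta)$. By Proposition~\ref{barycollapseprop}, $\bar(\Delta)$ is still collapsible, and by Lemma~\ref{barylemma} the action of $G$ on $\bar(\Delta)$ satisfies $\bar(\Delta)^g \subseteq \bar(\Delta)$ being a subcomplex for every $g \in G$. The hypotheses on $G$ and $G'$ are unchanged. Thus Theorem~\ref{nonabelianfixedpointtheorem} applies to the action $G \circlearrowleft \bar(\Delta)$, yielding $\chi(\bar(\Delta)^G) = 1$. In particular, $\bar(\Delta)^G$ is nonempty.

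The remaining step is to pull a $G$-fixed simplex of $\bar(\Delta)$ back to a $G$-fixed simplex of $\Delta$. An element of $\bar(\Delta)^G$ is, by Definition~\ref{barycentricdef}, a set $\{Q_1, Q_2, \ldots, Q_r\}$ of simplices of $\Delta$ forming a strict chain $Q_1 \subset Q_2 \subset \cdots \subset Q_r$ such that $g \cdot \{Q_1,\ldots,Q_r\} = \{g(Q_1),\ldots,g(Q_r)\}$ equals $\{Q_1,\ldots,Q_r\}$ for every $g \in G$. Since each $g$ is a simplicial automorphism, it preserves inclusion, and therefore any permutation of a strictly nested chain induced by $g$ must be the identity permutation. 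Hence $g(Q_i) = Q_i$ for each $i$ and each $g \in G$, so every $Q_i$ is a simplex of $\Delta$ fixed by all of $G$. In particular $Q_1 \in \Delta^G$, showing $\Delta^G$ is nonempty.

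There is no genuine obstacle: the proof is essentially a two-line composition of results already proved, and the only thing requiring care is the final translation from $\bar(\Delta)^G \neq \emptyset$ back to $\Delta^G \neq \emptyset$, which hinges on the trivial observation that an inclusion-preserving permutation of a totally ordered (by $\subset$) finite chain must be the identity.
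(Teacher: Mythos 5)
Your proof is correct and follows essentially the same route as the paper: pass to $\bar(\Delta)$ via Proposition~\ref{barycollapseprop} and Lemma~\ref{barylemma}, apply Theorem~\ref{nonabelianfixedpointtheorem} to get $\chi(\bar(\Delta)^G)=1$, then translate nonemptiness of $\bar(\Delta)^G$ back to nonemptiness of $\Delta^G$. The only difference is that you spell out the final translation (a $G$-fixed chain in $\bar(\Delta)$ has all its members individually fixed, since an inclusion-preserving permutation of a strict chain is the identity), a detail the paper states without elaboration.
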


\begin{proof}
By Proposition~\ref{barycollapseprop}, $\bar ( \Delta )$ is collapsible. By
Theorem~\ref{nonabelianfixedpointtheorem}, $\chi ( \bar ( \Delta )^G ) = 1$.  Therefore
$\bar ( \Delta )^G$ is nonempty, and thus $\Delta^G$ is likewise nonempty.
\end{proof}

Now let $\Delta^{[G]}$ denote the complex constructed in
\textit{\nameref{groupactionsection}}. The set $\Delta^{[G]}$ is very similar to
$\Delta^G$; indeed, there is a one-to-one inclusion preserving map
\begin{eqnarray}\label{naturaliso}
i \colon \Delta^{[G]} \to \Delta^G
\end{eqnarray}
which is given simply by mapping any $S \in \Delta^{[G]}$ to the union of the elements of
$S$. (The main difference between $\Delta^G$ and $\Delta^{[G]}$ is that $\Delta^{[G]}$ is
a simplicial complex, whereas $\Delta^G$ generally is not.)

The map~(\ref{naturaliso}) induces a simplicial isomorphism
\begin{eqnarray}\label{barygroupiso}
\bar \left( \Delta^{[G]} \right) \to
\bar ( \Delta )^G
\end{eqnarray}
Figure~\ref{barygroupfig} illustrates the relationship between $\Delta^{[G]}$ and $\bar (
\Delta )^G$. \hbox{Isomorphism}~(\ref{barygroupiso}) enables our final generalization of
\hbox{Theorem}~\ref{nonabelianfixedpointtheorem}.

\begin{figure}[!t]
 \centerline{\includegraphics{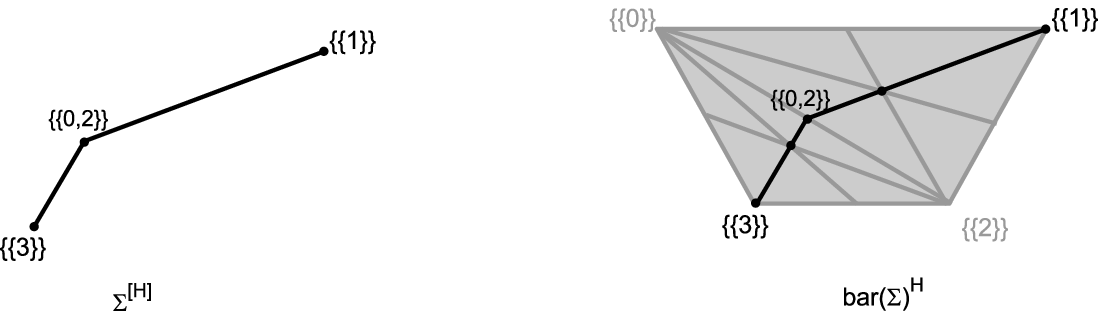}}
\fcaption{A continuation of the example from Figures~\ref{groupactionfig}
and \ref{groupaction2fig}. The barycentric subdivision of $\Sigma^{[H]}$ is
isomorphic to $\bar ( \Sigma )^H$.\label{barygroupfig}}
\end{figure}

\begin{theorem}\label{fpt}
Let $\Delta$ be a collapsible abstract simplicial complex. Let $G \circlearrowleft
\Delta$ be a group action on $\Delta$. Suppose that $G$ has a normal subgroup $G'$ which
is such that $\left| G' \right|$ is a prime power and $G / G'$ is cyclic.  Then,
\begin{eqnarray}
\chi ( \Delta^{[G]} ) = 1.
\end{eqnarray}
\end{theorem}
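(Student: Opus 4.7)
The plan is to reduce Theorem~\ref{fpt} to Theorem~\ref{nonabelianfixedpointtheorem} by passing to the barycentric subdivision, where the group action automatically has the desired ``$\Delta^g$ is a subcomplex'' property, and then to transport the Euler characteristic back to $\Delta^{[G]}$ itself.

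First, I would apply Proposition~\ref{barycollapseprop} to $\Delta$ to conclude that $\bar(\Delta)$ is a collapsible abstract simplicial complex. The $G$-action on $\Delta$ induces a $G$-action on $\bar(\Delta)$ by the functoriality of barycentric subdivision (a simplicial automorphism $f\colon\Delta\to\Delta$ sends chains $Q_1\subset\cdots\subset Q_r$ to $f(Q_1)\subset\cdots\subset f(Q_r)$, giving an automorphism of $\bar(\Delta)$). Crucially, Lemma~\ref{barylemma} guarantees that $(\bar(\Delta))^g$ is a subcomplex of $\bar(\Delta)$ for every $g\in G$, so the hypotheses of Theorem~\ref{nonabelianfixedpointtheorem} are now satisfied by the action $G\circlearrowleft\bar(\Delta)$.

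Applying Theorem~\ref{nonabelianfixedpointtheorem} to this action, I would obtain
\begin{eqnarray}
\chi\bigl(\bar(\Delta)^{G}\bigr) \;=\; 1.
\end{eqnarray}
Next I would invoke the simplicial isomorphism~(\ref{barygroupiso}), $\bar(\Delta^{[G]}) \cong \bar(\Delta)^G$, from the text preceding the theorem statement; since simplicially isomorphic complexes have identical Euler characteristics (both quantities are just signed counts of simplices of each dimension), this yields $\chi(\bar(\Delta^{[G]})) = \chi(\bar(\Delta)^G) = 1$.

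Finally, applying Proposition~\ref{baryeulerprop} to the complex $\Delta^{[G]}$ gives $\chi(\bar(\Delta^{[G]})) = \chi(\Delta^{[G]})$, and combining the previous equalities produces $\chi(\Delta^{[G]}) = 1$, as required. There is no real obstacle here: all the difficulty has been packaged into earlier results (most notably Proposition~\ref{acyclicitypreservation} inside the proof of Theorem~\ref{nonabelianfixedpointtheorem}, and the two barycentric subdivision propositions deferred to the appendix). The only point demanding a moment of care is verifying that the induced $G$-action on $\bar(\Delta)$ still has $G'$ normal of prime-power order with cyclic quotient, but that is automatic since the group $G$ is unchanged; only its action has been transported.
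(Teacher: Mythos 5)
Your proposal is correct and follows essentially the same route as the paper's own proof: pass to the barycentric subdivision, apply Proposition~\ref{barycollapseprop} and Theorem~\ref{nonabelianfixedpointtheorem} (using Lemma~\ref{barylemma} to check the subcomplex hypothesis), identify $\bar(\Delta)^G$ with $\bar(\Delta^{[G]})$ via~(\ref{barygroupiso}), and finish with Proposition~\ref{baryeulerprop}. You are simply more explicit than the paper about the intermediate justifications, which is fine.
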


\begin{proof}
By Proposition~\ref{barycollapseprop}, $\bar ( \Delta )$ is collapsible. Therefore the
Euler characteristic of $\bar ( \Delta )^G \cong \bar ( \Delta^{[G]} )$ is $1$.  By
Proposition~\ref{baryeulerprop}, the Euler characteristic of $\Delta^{[G]}$ is likewise
equal to $1$.
\end{proof}

\chapter[Results on Decision-Tree Complexity]{Results on Decision-Tree Complexity}\label{resultschapter}

In this part of the text, we will give the proofs of three lower bounds on the
decision-tree complexity of graph properties, due to Kahn, Saks, Sturtevant, and Yao.
Then we will sketch (without proof) some more recent results.

Let
\begin{eqnarray}
h \colon \mathbf{G} ( V ) \to \{ 0, 1 \}
\end{eqnarray}
be a nontrivial monotone-increasing graph property.  The function $h$ satisfies two
conditions: it is increasing (meaning that if $Z$ is a subgraph of $Z'$ then $h ( Z )
\leq h ( Z' )$) and it is also isomorphism-invariant ($Y \cong Y' \Longrightarrow h ( Y )
= h ( Y' )$).  Proofs of evasiveness exploit the interaction between these two
conditions.

As we saw in \textit{\nameref{basicconceptschapter}}, the monotone-increasing condition
implies that $h$ determines a simplicial complex, $\Delta_h$, whose \hbox{simplices}
correspond to graphs $Z$ that satisfy $h ( Z ) = 0$.  The isomorphism-invariant property
implies that this complex $\Delta_h$ is highly symmetric. If $\sigma$ is any permutation
of $V$, and
\begin{eqnarray}
E \subseteq \left\{ \{ v, w \} \mid v, w \in V \right\}
\end{eqnarray}
is an edge set such that $h ( ( V, E ) ) = 0$, then the edge set
\begin{equation}
\sigma ( E ) = \left\{ \{ \sigma ( v ) , \sigma ( w ) \} \mid \{ v , w \} \in E \right\}
\end{equation}
also satisfies $h ( (V , \sigma ( E )) ) = 0$.  Thus there is an induced automorphism
$\sigma \colon \Delta_h \to \Delta_h$.

If $h$ were nonevasive, then $\Delta_h$ would be collapsible, and we could apply
fixed-point theorems to $\Delta_h$.  Corollary~\ref{lefschetzcorollary} would imply that
$\Delta_h$ must have a simplex which is stabilized by $\sigma$.  Therefore, we have the
following interesting result: if $h$ is a nonevasive graph property, then for any
permutation $\sigma \colon V \to V$ there must be a nontrivial $\sigma$-invariant graph
which does not satisfy $h$.  Figure~\ref{invariantgraphsfig} shows  what we can deduce
when $\left| V \right| = 9$ and $\sigma$ is chosen to be a cyclic permutation.

\begin{figure}[!t]
 \centerline{\includegraphics{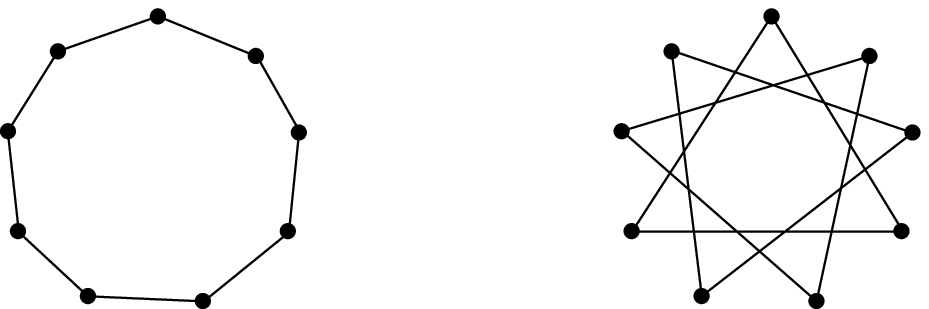}}
\fcaption{Let $V$ be a set of size $9$, and let $h$ be a nontrivial increasing graph
property.  If $h$ is not evasive, then at least one of the graphs above must fail to
satisfy $h$.\label{invariantgraphsfig}}
 \vspace*{-3pt}
\end{figure}

 \enlargethispage{6pt}

When we go further and consider the actions of finite groups on $\Delta_h$, we get
stronger results. Note that the entire symmetric group $\Sym ( V )$ acts on $\Delta_h$.
Unfortunately this group is too big for the application of any fixed-point theorems that
we have proved, and so we must restrict the action to some appropriate subgroup of $\Sym
( V )$. Making this choice of subgroup is a key step for many of the results that we will
discuss.

\section{Graphs of Order $p^k$}\label{mainresultsection}

\begin{theorem}[Kahn et~al. \cite{kss1984}]
Let $V$ be a finite set of order $p^k$, where $p$ is prime and $k \geq 1$.  Let
\begin{eqnarray}
h \colon \mathbf{G} (V) \to \{ 0, 1 \}
\end{eqnarray}
be a nontrivial monotone-increasing graph property.  Then, $h$ must be evasive.
\end{theorem}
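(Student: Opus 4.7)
The plan is to derive a contradiction from the assumption that $h$ is nontrivial, monotone-increasing, and nonevasive, by constructing a group action on $V$ that makes the hypotheses of Theorem~\ref{fpt} applicable to $\Delta_h$ while at the same time forcing $\Delta_h^{[G]}$ to be empty. Since $h$ is nontrivial and monotone, we have $h(\emptyset) = 0$ and $h(K_V) = 1$. First I would dispose of the degenerate case $\Delta_h = \emptyset$: then $h$ is the ``contains at least one edge'' property, and the adversary who always answers ``no'' forces $\binom{p^k}{2}$ queries, making $h$ evasive and contradicting our assumption. So we may assume $\Delta_h$ is nonempty, and Theorem~\ref{collapsibilitytheorem} then gives that $\Delta_h$ is collapsible.

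The crux is to exhibit the right group $G$. The plan is to identify $V$ with the finite field $\mathbb{F}_{p^k}$ and take $G$ to be the affine group consisting of all maps $x \mapsto ax + b$ with $a \in \mathbb{F}_{p^k}^\times$ and $b \in \mathbb{F}_{p^k}$. The subgroup of translations $G' = \{ x \mapsto x + b \mid b \in \mathbb{F}_{p^k}\}$ is normal, has order $p^k$ (a prime power), and the quotient $G/G' \cong \mathbb{F}_{p^k}^\times$ is cyclic, since the multiplicative group of a finite field is cyclic. Thus $G$ satisfies the hypothesis of Theorem~\ref{fpt}. A direct computation shows that $G$ is doubly transitive on $V$: given any two pairs of distinct elements, one can solve for $a$ and $b$ uniquely. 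Consequently $G$ acts transitively on the set of $2$-element subsets of $V$, which is the vertex set of $\Delta_h$.

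Because $h$ is isomorphism-invariant, the action of $G$ on $V$ induces a simplicial action on $\Delta_h$. Theorem~\ref{fpt} then yields $\chi(\Delta_h^{[G]}) = 1$. To contradict this, I would unwind the definition of $\Delta_h^{[G]}$: the orbits of $G$ on the vertices of $\Delta_h$ consist of a single orbit $A_1$, equal to the entire edge set of $V$. The only nonempty $T \subseteq \{A_1\}$ is $\{A_1\}$ itself, and $\bigcup_{S \in T} S = A_1$ is the edge set of the complete graph $K_V$. But $A_1 \notin \Delta_h$ since $h(K_V) = 1$, so $\Delta_h^{[G]} = \emptyset$ and hence $\chi(\Delta_h^{[G]}) = 0$, contradicting $\chi(\Delta_h^{[G]}) = 1$.

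The principal obstacle is the simultaneous fulfillment of two competing demands on $G$: it must have a normal subgroup of prime-power order with cyclic quotient (for Theorem~\ref{fpt} to apply), and it must be rich enough that the only $G$-invariant edge sets are $\emptyset$ and the complete graph (so that $\Delta_h^{[G]}$ is forced to be empty). The prime-power hypothesis $|V| = p^k$ is precisely what reconciles these demands via the affine group of $\mathbb{F}_{p^k}$; for a general $n$ no analogous structure is available on $V$, which is the reason that Theorem~\ref{kss2quote} requires a different line of argument.
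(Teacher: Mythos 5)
Your proposal is correct and follows essentially the same route as the paper: identify $V$ with $\mathbb{F}_{p^k}$, use the affine group with its translation subgroup to satisfy the hypotheses of the fixed-point theorem, invoke double transitivity to force the only nonempty $G$-invariant graph to be $K_V$, and derive a contradiction with nontriviality. The only cosmetic difference is that you invoke Theorem~\ref{fpt} (computing $\chi(\Delta_h^{[G]}) = 0 \neq 1$) where the paper applies the weaker Theorem~\ref{fptinit} (nonemptiness of $\Delta_h^G$), but the underlying argument is identical.
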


 \removelastskip\pagebreak

\begin{proof}
Without loss of generality, we may assume that $V$ is the set of elements of the finite
field $\mathbb{F}_{p^k}$.  For any $a, b \in \mathbb{F}_{p^k}$ with $a \neq 0$, there is
a permutation of $V$ given by
\begin{eqnarray}
x \mapsto ax + b.
\end{eqnarray}
Let $G \subseteq \textnormal{Sym} ( V )$ be the group of all such permutations.  Let $G'
\subseteq G$ be the subgroup consisting of permutations of the form $x \mapsto x + b$.

We make the following observations:
\begin{enumerate}
\item \textbf{The subgroup $G'$ is an abelian group of
order $p^k$.}  It is  isomorphic to the additive group of $\mathbb{F}_{p^k}$.
\item \textbf{The subgroup $G'$ is normal.}
This is apparent from the fact that for any $x, a, b \in \mathbb{F}_{p^k}$, with $a \neq
0$,
\begin{eqnarray}
a^{-1} ( a x  + b )  = x + a^{-1}  b.
\end{eqnarray}
\item \textbf{The quotient group $G / G'$ is cyclic.}
The quotient group $G / G'$ is isomorphic to the multiplicative group of
$\mathbb{F}_{p^k}$, which is known to be cyclic (see Theorem IV.1.9 from \cite{lang}).
\item \label{transitivepairs} \textbf{The action of $G$ is transitive on
pairs of distinct elements $( x, x' ) \in V \times V$.} This is a consequence of the fact
that for any pairs $(x, x')$ and $(y, y')$ with $x \neq x'$ and $y \neq y'$, the system
of equations
\begin{eqnarray}
ax + b & = & y \\
a x' + b & = & y'
\end{eqnarray}
has a solution, with $a \neq 0$.
\end{enumerate}

Consider the group action
\begin{eqnarray}
G \circlearrowleft \Delta_h
\end{eqnarray}
Suppose that the graph property $h$ is nonevasive. By
Theorem~\ref{collapsibilitytheorem}, the simplicial complex $\Delta_h$ is
collapsible.\footnote{Technically, this is not true if $\Delta_h$ is empty, and so we
need to address that case separately.  If $\Delta_h$ is empty, then $h$ must be the
function that maps the empty graph to zero and all other graphs to $1$.  This graph
property is easily seen to be evasive.}  By Theorem~\ref{fptinit}, the set~$\left(
\Delta_h \right)^G$ is nonempty.  Therefore there is a nonempty $G$-invariant graph which
does not satisfy $h$.  But by property (\ref{transitivepairs}) above, the only nonempty
\hbox{$G$-invariant} graph is the complete graph. This makes $h$ a trivial graph
property, and thus we obtain a contradiction.

We conclude that $h$ must be an evasive graph property.
\end{proof}

\section{Bipartite Graphs}

Let $V$ be a finite set which is the disjoint union of two subsets, $Y$ and~$Z$. Then a
bipartite graph on $(Y, Z)$ is a graph whose edges are all elements of the set
\begin{eqnarray}\label{bipartiteedgepairs}
\left\{ \{ y, z \} \mid y \in Y, z \in Z \right\}\!.
\end{eqnarray}
A bipartite isomorphism between such graphs is a graph isomorphism which respects the
partition $(Y, Z)$.

Let $\mathbf{B} ( Y, Z)$ denote the set of all bipartite graphs on $(Y, Z)$. A
\textbf{bipartite graph property} is a function
\begin{eqnarray}\label{examplebipartiteprop}
f \colon \mathbf{B} ( Y, Z ) \to \{ 0, 1 \}
\end{eqnarray}
which respects bipartite isomorphisms. If this function is monotone increasing, it
determines a simplicial complex $\Delta_f$ whose vertices are elements of the
set~(\ref{bipartiteedgepairs}).

Naturally, we say that the bipartite graph property~(\ref{examplebipartiteprop}) is
evasive if its decision-tree complexity $D(f)$ is equal to $\left| Y \right| \cdot \left|
Z \right|$.  The following proposition can be proved by the same method that we used to
prove Theorem~\ref{collapsibilitytheorem}.

\begin{proposition}
Let $Y$ and $Z$ be disjoint finite sets, and let
\begin{eqnarray}
f \colon \mathbf{B} ( Y, Z ) \to \{ 0, 1 \}
\end{eqnarray}
be a monotone-increasing bipartite graph property which is not evasive.  If the complex
$\Delta_f$ is not empty, then it is collapsible.
\end{proposition}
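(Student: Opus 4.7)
The plan is to adapt the proof of Theorem~\ref{collapsibilitytheorem} almost verbatim, with the set of possible bipartite edges $E_0 = \{ \{y,z\} \mid y \in Y, z \in Z \}$ (of cardinality $|Y|\cdot|Z|$) playing the role that $\binom{V_0}{2}$ played before. First I would observe that the decision-tree model for $f$ queries edges from $E_0$ rather than from the full set of $2$-element subsets of $V$, so non-evasiveness of $f$ gives a decision tree $T$ of depth strictly less than $|Y|\cdot|Z|$ which correctly computes $f$.

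Next I would normalize $T$ to obtain a tree $T'$ satisfying the two bullet conditions from the original proof, adapted to the bipartite setting: no path repeats a query from $E_0$, and every root-to-leaf path has length exactly $|Y|\cdot|Z|-1$. This normalization uses only the fact that inserting redundant queries and padding short paths with ``dummy'' queries preserves correctness, which has nothing to do with whether the edges are bipartite or arbitrary. The same left-to-right ordering on leaves (at each internal node, the ``Y''-subtree precedes the ``N''-subtree) then gives a total order on leaves, and each leaf corresponds to exactly two graphs in $\mathbf{B}(Y,Z)$ differing in the single unqueried edge.

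The collapse is read off exactly as before. Working through the ``$0$''-leaves in increasing order, each such leaf produces a pair of simplices $\gamma_1 \subsetneq \gamma_2$ in $\Delta_f$ (the two bipartite graphs at that leaf, corresponding to the edge sets determined by the path). The leaf-ordering property guarantees, inductively, that $\gamma_1$ is a free face of $\gamma_2$ in the current (partially collapsed) complex, so we may perform the primitive elementary collapse that removes $\{\gamma_1,\gamma_2\}$. After exhausting the ``$0$''-leaves, every simplex of $\Delta_f$ has been removed except possibly a single $0$-simplex, yielding collapsibility.

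The only real thing to verify is that the free-face argument still goes through when the ambient edge set is bipartite rather than $\binom{V_0}{2}$, and I expect this to be the main (but mild) obstacle: one must check that the combinatorial identification of leaves of $T'$ with pairs of simplices, and the claim that the smaller simplex in the current leaf's pair is not contained in any yet-unremoved maximal simplex, depend only on the path-ordering of leaves and on the fact that the algorithm correctly decides $f$. Neither of these uses any special structure of $\binom{V_0}{2}$, so the argument transfers directly and the theorem follows.
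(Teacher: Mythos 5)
Your proposal is correct and matches the paper's intent exactly: the paper does not give a separate proof here but simply states that the proposition "can be proved by the same method that we used to prove Theorem~\ref{collapsibilitytheorem}," and your adaptation (replacing $\binom{V_0}{2}$ with the bipartite edge set $E_0$ throughout, keeping the tree normalization, leaf ordering, and free-face collapse reading) is precisely that transfer.
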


Note that the complex $\Delta_f$ always has a group action,
\begin{eqnarray}
\left( \Sym ( Y ) \times \Sym (Z )  \right) \circlearrowleft \Delta_f.
\end{eqnarray}

{\makeatletter
\newtheoremstyle{nowthm}{4pt plus6pt minus4pt}{0pt}{\upshape}{0pt}{\bfseries}{}{.6em}
  {\rule{\textwidth}{.5pt}\par\vspace*{-1pt}\newline\thmname{#1}\thmnumber{\@ifnotempty{#1}{\hspace*{3.65pt}}{#2}$\!\!$}
  \thmnote{{\the\thm@notefont\bf (#3).}}}
\def\@endtheorem{\par\vspace*{-7.8pt}\noindent\rule{\textwidth}{.5pt}\vskip8pt plus6pt minus4pt}
\ignorespaces \makeatother

\begin{theorem}[Yao \cite{yao1988}]
Let $Y$ and $Z$ be disjoint finite sets, and let
\begin{eqnarray}
f \colon \mathbf{B} ( Y, Z ) \to \{ 0, 1 \}
\end{eqnarray}
be a nontrivial bipartite graph property which is monotone increasing.  Then, $f$ is
evasive.
\end{theorem}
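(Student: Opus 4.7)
The plan is to mirror the proof of the preceding theorem on graphs of order $p^k$. Suppose for contradiction that the nontrivial monotone-increasing bipartite graph property $f$ is not evasive. If $\Delta_f$ is empty then $f$ assigns $1$ to every nonempty bipartite edge set and $0$ to the empty one; the same adversary argument used in the Introduction for cycle-containment shows that this ``contains at least one bipartite edge'' property is already evasive, an immediate contradiction. Otherwise the proposition stated just above guarantees that $\Delta_f$ is collapsible.

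The heart of the proof is to produce a subgroup $G \leq \Sym ( Y ) \times \Sym ( Z )$ with two properties: (i)~$G$ satisfies the hypothesis of Theorem~\ref{fpt}, i.e., $G$ has a normal subgroup of prime-power order with cyclic quotient; and (ii)~the induced action of $G$ on the edge set $Y \times Z$ is transitive. Granting such a $G$, Theorem~\ref{fpt} applied to the natural action $G \circlearrowleft \Delta_f$ yields $\chi(\Delta_f^{[G]}) = 1$, so $\Delta_f^{[G]}$ is nonempty. Because $G$ is transitive on $Y \times Z$ there is only one $G$-orbit on the vertex set of $\Delta_f$, so $\Delta_f^{[G]}$ can consist of at most a single $0$-simplex, corresponding to the single orbit $Y \times Z$ itself. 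Nonemptiness therefore means $Y \times Z \in \Delta_f$, i.e., $f$ vanishes on the complete bipartite graph; by monotonicity $f$ is then identically $0$, contradicting nontriviality.

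The main obstacle is producing such a $G$ for arbitrary $m = |Y|$ and $n = |Z|$. When $\gcd(m,n) = 1$ the cyclic group $\mathbb{Z}/m \times \mathbb{Z}/n \cong \mathbb{Z}/(mn)$, acting by independent cyclic shifts on $Y$ and $Z$, already works (take $G' = \{e\}$). When $\gcd(m,n)$ is itself a prime power $p^k$, the abelian group $\mathbb{Z}/m \times \mathbb{Z}/n$ is still transitive on $Y \times Z$ and its Sylow $p$-subgroup is normal with cyclic quotient, so Oliver's condition is again available. The delicate case is when $\gcd(m,n)$ has two or more distinct prime factors, for then no Sylow subgroup of the abelian product yields a cyclic quotient; one must instead construct a nonabelian group---combining a $p$-group acting on a block partition of $Y$ and $Z$ with an outer cyclic symmetry permuting the blocks---so as to retain transitivity on $Y \times Z$ while presenting the whole group as a prime-power normal subgroup extended by a cyclic quotient. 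Carrying out such a construction uniformly in $(m,n)$ is the crux of the argument.
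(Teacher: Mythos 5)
Your overall strategy (apply Theorem~\ref{fpt} to a group action on $\Delta_f$, then read off a contradiction from the Euler characteristic) is the right one, but the specific reduction you chose leaves a genuine gap, and the paper's actual argument goes around the obstacle rather than through it.

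You set yourself the task of finding a subgroup $G \leq \Sym(Y) \times \Sym(Z)$ that both satisfies Oliver's hypothesis and acts transitively on $Y \times Z$. Transitivity is what collapses $\Delta_f^{[G]}$ to a single point and forces $f$ to vanish on the complete bipartite graph. But, as you yourself note, constructing such a $G$ when $\gcd(|Y|,|Z|)$ has two or more distinct prime divisors is exactly where the argument runs aground: the Sylow-subgroup trick in the abelian group $\mathbb{Z}/m \times \mathbb{Z}/n$ gives a noncyclic quotient, and you do not actually exhibit a nonabelian replacement. In, say, the case $|Y| = |Z| = 6$, no candidate group is produced, and it is far from clear that one exists in general. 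So at the crucial step the proof defers to an unverified construction, which is a real gap rather than a routine detail.

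The paper avoids the problem entirely by weakening the demand on $G$. It takes $G$ to be the cyclic group generated by a single cyclic permutation $\sigma$ of $Y$ alone; being cyclic, $G$ trivially satisfies Oliver's condition (take $G' = \{e\}$). The action on $Y \times Z$ is then very far from transitive: the orbits are precisely the ``columns'' $H_z = \{\, \{y,z\} \mid y \in Y \,\}$, one for each $z \in Z$. Because $f$ is invariant under all of $\Sym(Y) \times \Sym(Z)$ and monotone, its value on a union $H_S = \bigcup_{z \in S} H_z$ depends only on $|S|$, and there is a threshold $k$ with $f(H_S) = 1 \iff |S| > k$. Consequently $\Delta_f^{[G]}$ is exactly the $(k-1)$-skeleton of the simplex on the $|Z|$ vertices $\{H_z\}$, whose Euler characteristic is computed in closed form to be $1 + (-1)^{k-1}\binom{|Z|-1}{k}$. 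Nonevasiveness plus Theorem~\ref{fpt} forces this to equal $1$, which can only happen when $k = |Z|$, i.e., when $f$ is identically zero. The moral difference is that the paper does not need the invariant subcomplex to be a point; it only needs to know its Euler characteristic explicitly, and the combinatorics of the bipartite complex deliver that for free once one restricts to a cyclic action on one side. Your instinct that transitivity is the thing to shoot for is natural, but it is strictly more than the fixed-point machinery requires, and the stronger requirement is what makes the group-theoretic construction so hard.
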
}

\begin{proof}
Let $\sigma \colon Y \to Y$ be a cyclic permutation of the elements of $Y$, and let $G
\subseteq \Sym ( Y )$ be the subgroup generated by $\sigma$. The edge set of any
$G$-invariant bipartite graph has the form
\begin{eqnarray}
H_S := \left\{  \{ y, z \} \mid y \in Y, z \in S \right\}
\end{eqnarray}
where $S$ is a subset of $Z$ (see Figure~\ref{invariantgraphfig}).  Since $f$ is
isomorphism-invariant and monotone-increasing, the behavior of $f$ on such graphs can be
easily described: there is some integer $k \in \{ 1, 2, \ldots, \left| Z \right| \}$ such
that
\begin{eqnarray}
( V , H_S ) \textnormal{ has property $f$} \Longleftrightarrow \left| S \right| > k.
\end{eqnarray}

\begin{figure}[!b]
 \centerline{\includegraphics{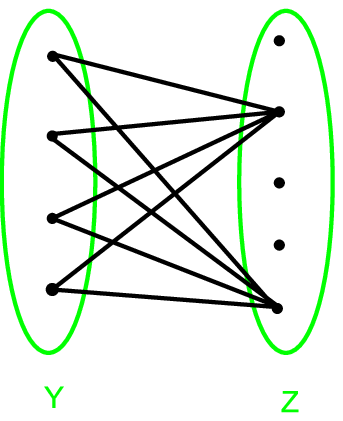}}
\fcaption{An example of a set $H_S$.\label{invariantgraphfig}}
\end{figure}

Let $\Delta = \Delta^{[G]}$.  The vertices of $\Delta^{[G]}$ are the sets of the form
\begin{eqnarray}
H_z := \left\{ \{ y, z \} \mid y \in Y \right\}\!,
\end{eqnarray}
with $z \in Z$, and the simplicies are precisely the subsets of $\{ H_z \mid z \in Z \}$
whose union forms a graph that does not have property $f$.  Thus we can calculate the
Euler characteristic directly:
\begin{eqnarray}
\chi ( \Delta^{[G]} )
& = & \sum_{j = 0}^{k-1} (-1)^j \binom{\left| Z \right|}{ j + 1} \\
& = & 1 + (-1)^{k-1} \binom{ \left| Z \right| - 1}{k}.
\end{eqnarray}

Suppose that $f$ is nonevasive.  Then $\Delta$ is collapsible and by Theorem~\ref{fpt},
\begin{eqnarray}
\chi ( \Delta^{[G]} ) & = & 1.
\end{eqnarray}
But this is possible only if $k = \left| Z \right|$ and $f$ is trivial.
\end{proof}

\section{A General Lower Bound}

Now we prove a lower bound on decision-tree complexity which applies to graphs of
arbitrary size. Our method of proof is based on \cite{kss1984}.

\begin{proposition}\label{generallowerbound}
Let $V$ be a finite set and let
\begin{eqnarray}
h \colon \mathbf{G} ( V ) \to \{ 0, 1 \}
\end{eqnarray}
be a nontrivial monotone-increasing graph property. Let $p$ be the largest prime that is
less than or equal to $\left| V \right|$.  Then,
\begin{eqnarray}
D ( h ) \geq \frac{p^2}{4}.
\end{eqnarray}
\end{proposition}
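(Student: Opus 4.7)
The plan is to reduce to the bipartite evasiveness theorem of Yao. First dispose of the boundary case: if $n = p$, then $V$ has prime order, and applying Theorem~\ref{kss1quote} with $k = 1$ yields $D(h) = \binom{n}{2} \geq p^2/4$ (since $\binom{p}{2} \geq p^2/4$ whenever $p \geq 2$). So assume henceforth $n > p$, hence $n \geq p + 1$.

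Pick disjoint subsets $Y, Z \subseteq V$ with $|Y| = |Z| = m := \lceil p/2 \rceil$; this is possible because $2m \leq p + 1 \leq n$, and $m^2 \geq p^2/4$. Let $H := \Sym(Y) \times \Sym(Z)$, regarded as a subgroup of $\Sym(V)$ acting trivially on $R := V \setminus (Y \cup Z)$. The edges of $K_n$ decompose into $H$-orbits: the intra-$Y$ orbit, the intra-$Z$ orbit, the bipartite orbit $K_{Y,Z}$, and various orbits involving the vertices of $R$.

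The goal is to exhibit an $H$-invariant subgraph $F \subseteq K_n \setminus K_{Y,Z}$ with $h(F) = 0$ and $h(F \cup K_{Y,Z}) = 1$. Granted such an $F$, define $f \colon \mathbf{B}(Y, Z) \to \{0, 1\}$ by $f(E) := h(F \cup E)$. Then $f$ is monotone-increasing (inherited from $h$) and $\Sym(Y) \times \Sym(Z)$-invariant (because $F$ is $H$-invariant and $h$ is a graph property), hence a bipartite graph property; it is nontrivial by the defining property of $F$. Moreover, any decision tree for $h$ of depth $D(h)$ induces one for $f$ of depth at most $D(h)$ by simulation: on input $E$, answer non-$YZ$-edge queries using $F$ (at zero cost) and $YZ$-edge queries using $E$. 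Yao's bipartite theorem applied to $f$ then yields $D(h) \geq D(f) = m^2 \geq p^2/4$.

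The technical crux is producing such an $F$. If $h(K_{Y,Z}) = 1$ take $F = \emptyset$; if $h(K_n \setminus K_{Y,Z}) = 0$ take $F = K_n \setminus K_{Y,Z}$. Otherwise $h(K_{Y,Z}) = 0$ and $h(K_n \setminus K_{Y,Z}) = 1$, and I would walk along a maximal chain of $H$-invariant subgraphs
\begin{eqnarray*}
\emptyset = F_0 \subsetneq F_1 \subsetneq \cdots \subsetneq F_N = K_n \setminus K_{Y,Z}
\end{eqnarray*}
obtained by successively adjoining one $H$-orbit of edges at a time. Each of the sequences $\{h(F_i)\}$ and $\{h(F_i \cup K_{Y,Z})\}$ is a monotone $\{0,1\}$-valued sequence from $0$ to $1$, with the first dominated by the second. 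If their transition indices differ along some chain, the intermediate $F_i$ with $h(F_i) = 0 < 1 = h(F_i \cup K_{Y,Z})$ is the required $F$. The main obstacle is the ``rigid'' case in which the two transitions coincide along every chain---equivalently, $h$ is entirely insensitive to adjoining $K_{Y,Z}$ on $H$-invariant configurations. There the transition of $h$ is forced onto some other $H$-orbit $O^{*}$, and the plan is to redirect the reduction using $O^{*}$ in place of $K_{Y,Z}$, either by repackaging $O^{*}$ as a bipartite orbit between two vertex-sets whose product is still at least $p^2/4$, or by modifying the initial choice of $(Y, Z)$ to accommodate the structure of $O^{*}$. Performing this orbit-by-orbit case analysis, and verifying in each branch that the reduction still produces a bipartite property of size $\geq p^2/4$ via isomorphism invariance of $h$, will be the hard part of the argument.
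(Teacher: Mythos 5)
Your instinct to reduce to Yao's bipartite theorem is sound --- the paper also routes through Yao --- but your proposal has a genuine gap precisely where you flag one, and the ``rigid case'' you describe really does occur. Take $n = 6$ (so $p = 5$, $m = \lceil p/2 \rceil = 3$), $Y = \{1,2,3\}$, $Z = \{4,5,6\}$, $R = \emptyset$, and $h =$ ``contains two vertex-disjoint triangles.'' The $H$-invariant subgraphs of $K_n \smallsetminus K_{Y,Z}$ are $\emptyset$, $K_Y$, $K_Z$, and $K_Y \cup K_Z$. One checks $h(\emptyset) = h(K_{Y,Z}) = 0$; $h(K_Y) = h(K_Y \cup K_{Y,Z}) = 0$ (every triangle in $K_Y \cup K_{Y,Z}$ uses at least two vertices of $Y$, so two disjoint ones would require $|Y|\geq 4$); symmetrically $h(K_Z) = h(K_Z \cup K_{Y,Z}) = 0$; and $h(K_Y \cup K_Z) = h(K_n) = 1$. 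So $h(E) = h(E \cup K_{Y,Z})$ for every $H$-invariant $E$, and no witness $F$ exists. Your fallback --- ``redirect the reduction using $O^{*}$ in place of $K_{Y,Z}$'' --- is not an argument: here the critical orbit $K_Z$ is a $3$-clique, not a bipartite orbit of product size $\geq p^2/4$, and there is no alternative bipartition of $V$ of the required size. The plan stalls exactly where you admit ``the hard part'' lives.

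The paper avoids this difficulty by organizing the induction differently. Rather than committing to a bipartition of size $\approx p/2$ inside $V$ and hunting for a witness $F$, it first asks whether $h(K_{1,n-1}) = 0$ or $h(K_{n-1}) = 1$. If either holds, one fixes the star $K_{1,n-1}$ (resp.\ an isolated vertex) and obtains a nontrivial monotone property on $n-1$ vertices, so $D(h) \geq C_{n-1}$. Only when $h(K_{1,n-1}) = 1$ \emph{and} $h(K_{n-1}) = 0$ does the paper invoke Yao, using the balanced bipartition with $m = \lfloor n/2 \rfloor$ and the explicit witness $F = K_m$, which automatically works because $h(K_m) \leq h(K_{n-1}) = 0$ and $h(K_m \cup K_{m,n-m}) \geq h(K_{1,n-1}) = 1$; this gives $D(h) \geq m(n-m) \geq (n-1)^2/4$. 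Iterating $C_k \geq \min\{C_{k-1}, (k-1)^2/4\}$ down from $n$ to $p$ and using $C_p = \binom{p}{2} \geq p^2/4$ finishes. In your example the paper takes the first branch ($h(K_{1,5}) = 0$) and drops to $n = 5 = p$, bypassing the rigid obstruction entirely. The key difference is that the paper reserves the bipartite reduction for the single configuration in which a witness is guaranteed, and otherwise shrinks $n$, rather than forcing a bipartite reduction for every $h$.
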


\begin{proof}
Assume that $\left| V \right| = n$. For any $r, s \geq 0$, let us write $K_r$ for the
complete graph on $\{ 1, 2, \ldots, r \}$, and let us write $K_{r, s}$ for the complete
bipartite graph on the sets $\{ 1, 2, \ldots, r \}$ and $\{ r+1 , \dots , r + s \}$.  For
any two graphs $H = (V, E)$ and $H' = (V' , E' )$, let us abuse notation slightly and
write $H \cup H'$ for the graph $(V \cup V', E \cup E')$.

For any $k \geq 1$, let $C_k$ denote the least decision-tree complexity that occurs for
nontrivial monotone-increasing graph properties on graphs of size $k$. We prove a lower
bound for $D ( h )$ in three cases.

\medskip\textbf{Case 1:} $\mathbf{h ( K_{1,n-1} ) = 0}.$ In this case, the function $h$
induces a nontrivial graph property $h'$ on the vertex set $\{ 2, 3, \ldots, n \}$, given
by
\begin{eqnarray}
h' ( P ) & = & h ( P \cup K_{1, n-1} ).
\end{eqnarray}
This function has decision-tree complexity at
least $C_{n-1}$, and therefore $D ( h ) \geq C_{n-1}$.

\medskip
\textbf{Case 2:} $\mathbf{h ( K_{n-1} ) = 1.}$ In this case the function $h$ induces a
nontrivial graph property $h'$ on the vertex set $\{ 2, 3, \ldots, n \}$ given by
\begin{eqnarray}
h' ( P ) & = & h ( P \cup K_1 ),
\end{eqnarray}
which is likewise nontrivial.  This function has decision-tree complexity at least
$C_{n-1}$, and so $D ( h ) \geq C_{n-1}$.

\medskip
\textbf{Case 3:} $\mathbf{h ( K_{1, n-1} ) = 1}$ \textbf{and} $\mathbf{h ( K_{n-1} ) =
0}$. Let $m = \lfloor n/2 \rfloor$.  The property $h$ induces a bipartite graph property
on the sets $\{1, 2, \ldots, m \}$ and $\{ m+1, m+2, \ldots, m \}$ defined by
\begin{eqnarray}
h' ( P ) = h ( P \cup K_m ).
\end{eqnarray}
Since $h ( K_m ) \leq h ( K_{n-1} ) = 0$ and $h ( K_m \cup K_{m,n-m} ) \geq h ( K_{1,
n-1} ) = 1$, the property $h'$ is nontrivial.  Therefore it has decision-tree complexity
at least $m (n-m)$.  The decision-tree complexity of $h$ is likewise bounded by $m ( n-m
) \geq (n-1)^2/4$.

\medskip
In all cases, we have
\begin{eqnarray}
D ( h ) \geq  \min \left\{ C_{n-1}, \frac{(n-1)^2}{4} \right\}\!.
\end{eqnarray}
The same reasoning shows that
\begin{eqnarray}
C_k \geq \min \left\{ C_{k-1} , \frac{(k-1)^2}{4} \right\}
\end{eqnarray}
for every $k \in \{ p+1, p+2 , \ldots, n-1 \}$.  Therefore
by induction,
\begin{eqnarray}
D ( h ) \geq \min \left\{ C_p , \frac{p^2}{4} \right\}\!.
\end{eqnarray}
The quantity $C_p$ is $\binom{p}{2}$, and the desired result follows.
\end{proof}

{\makeatletter
\newtheoremstyle{nowthm}{4pt plus6pt minus4pt}{0pt}{\upshape}{0pt}{\bfseries}{}{.6em}
  {\rule{\textwidth}{.5pt}\par\vspace*{-1pt}\newline\thmname{#1}\thmnumber{\@ifnotempty{#1}{\hspace*{3.65pt}}{#2}$\!\!$}
  \thmnote{{\the\thm@notefont\bf (#3).}}}
\def\@endtheorem{\par\vspace*{-7.8pt}\noindent\rule{\textwidth}{.5pt}\vskip8pt plus6pt minus4pt}
\ignorespaces \makeatother

\begin{theorem}[Kahn et~al. \cite{kss1984}]
\label{lowerboundtheorem} Let $C_n$ denote the least decision-tree complexity that occurs
among all nontrivial monotone-increasing graph properties of order $n$.  Then,
\begin{eqnarray}
C_n \geq \frac{n^2}{4} - o ( n^2 ).
\end{eqnarray}
\end{theorem}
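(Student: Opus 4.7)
The plan is to derive Theorem~\ref{lowerboundtheorem} as an essentially immediate consequence of Proposition~\ref{generallowerbound}. That proposition already gives the lower bound $C_n \geq p^2/4$, where $p$ is the largest prime with $p \leq n$, so the only remaining work is to control how far $p$ can lie below $n$. If we can show that $p = n - o(n)$, then $p^2 = n^2 - o(n^2)$, and dividing by $4$ yields the stated asymptotic bound.

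First, I would invoke Proposition~\ref{generallowerbound}: for any nontrivial monotone-increasing graph property $h$ on a vertex set of size $n$, we have $D(h) \geq p^2/4$, and therefore $C_n \geq p^2/4$. This step is instant and requires no further argument.

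The substantive ingredient is a sufficiently strong prime-gap estimate. Bertrand's postulate (which only guarantees $p \geq n/2$) is far too weak here; it would give $C_n \geq n^2/16$, not the claimed $n^2/4 - o(n^2)$. What we need is that the largest prime below $n$ satisfies $p \geq n - o(n)$, equivalently that consecutive prime gaps near $n$ are $o(n)$. This is a well-known consequence of the prime number theorem (and more refined estimates, such as Hoheisel's theorem or the Baker--Harman--Pintz bound, give vastly stronger error terms, though we do not need them). I would simply quote this as a known fact from analytic number theory.

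Putting the two pieces together: fix a prime $p = p(n)$ with $p \leq n$ and $n - p = o(n)$. Then
\begin{eqnarray}
C_n \;\geq\; \frac{p^2}{4} \;=\; \frac{(n - (n-p))^2}{4} \;=\; \frac{n^2}{4} - \frac{n(n-p)}{2} + \frac{(n-p)^2}{4} \;=\; \frac{n^2}{4} - o(n^2),
\end{eqnarray}
which is the desired inequality. The only ``hard'' part of the argument is actually external to this monograph, namely the appeal to prime-gap estimates; everything internal to the topological/combinatorial development has already been done in Proposition~\ref{generallowerbound}.
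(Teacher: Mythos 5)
Your proposal is correct and follows essentially the same route as the paper: invoke Proposition~\ref{generallowerbound} to get $C_n \geq p^2/4$ for the largest prime $p \leq n$, then appeal to the prime number theorem to get $n - p = o(n)$ and expand the square. The paper even includes the same caveat you make in spirit, noting that the PNT rules out linearly sized prime gaps.
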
}

\begin{proof}
By the prime number theorem, there is a function $z ( n ) = o ( n )$ such that for any
$n$, the interval $[n -  z(n), n]$ contains a prime.\footnote{The prime number theorem
\cite{zagier1997} asserts that if $\pi( n )$ denotes the number of primes less than or
equal to $n$, then $\lim_{n \to \infty} \pi ( n ) \left( n / \ln n \right)^{-1} = 1$. If
there were an infinite number of linearly sized gaps between the primes, this limit could
not exist.} By Proposition~\ref{generallowerbound},
\begin{eqnarray}
C_n & \geq & \frac{(n - z(n))^2}{4} \\
& \geq & \frac{n^2}{4} - o ( n^2 ).
\end{eqnarray}
as desired.
\end{proof}

\section{A Survey of Related Results}

Much work on the decision-tree complexity of graph properties has followed the papers of
Kahn, Saks, Sturtevant, and Yao. We briefly sketch some of the newer results in this
area.

V. King proved a lower bound for properties of \textbf{directed} graphs.

{\makeatletter
\newtheoremstyle{nowthm}{4pt plus6pt minus4pt}{0pt}{\upshape}{0pt}{\bfseries}{}{.6em}
  {\rule{\textwidth}{.5pt}\par\vspace*{-1pt}\newline\thmname{#1}\thmnumber{\@ifnotempty{#1}{\hspace*{3.65pt}}{#2}$\!\!$}
  \thmnote{{\the\thm@notefont\bf (#3).}}}
\def\@endtheorem{\par\vspace*{-7.8pt}\noindent\rule{\textwidth}{.5pt}\vskip8pt plus6pt minus4pt}
\ignorespaces \makeatother

\begin{theorem}[King \cite{king1990}]
Let $C'_n$ denote the least decision-tree complexity
that occurs among all nontrivial monotone
\textbf{directed} graph properties of order $n$.  Then,
\begin{eqnarray}
C'_n \geq \frac{n^2}{2} - o ( n^2 ).
\end{eqnarray}
\end{theorem}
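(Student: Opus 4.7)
The plan is to mirror the three-part structure of the proof of Theorem~\ref{lowerboundtheorem}, replacing each ingredient by its directed analog: (i)~a directed version of the prime-power evasiveness theorem of Section~\ref{mainresultsection}; (ii)~a directed bipartite evasiveness result, the directed analog of Yao's theorem; and (iii)~a three-case reduction in the style of Proposition~\ref{generallowerbound}, followed by induction and the prime number theorem.

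For step~(i), I would show that if $|V| = p^k$ and $h$ is a nontrivial monotone directed graph property on $V$, then $h$ is evasive, giving $D(h) \geq p^k(p^k-1)$. The proof is essentially identical to the one in Section~\ref{mainresultsection}: identify $V$ with $\mathbb{F}_{p^k}$ and use the affine group $G = \{x \mapsto ax + b \mid a \in \mathbb{F}_{p^k}^\times,\ b \in \mathbb{F}_{p^k}\}$, which has a normal abelian $p$-subgroup with cyclic quotient. The crucial observation is property~(4) of Section~\ref{mainresultsection}: $G$ is transitive on \emph{ordered} pairs of distinct elements of $V$. This is exactly the strength needed in the directed setting, where an edge is an ordered pair, and it forces every nonempty $G$-invariant directed graph to be the complete directed graph on $V$. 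Theorem~\ref{collapsibilitytheorem} and Theorem~\ref{fptinit} then close the contradiction as in the undirected case.

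For step~(ii), I would prove that for disjoint finite $Y, Z$, any nontrivial monotone \emph{directed} bipartite graph property $f$ on $(Y, Z)$ is evasive, so that $D(f) = 2|Y||Z|$. Yao's argument adapts as follows: let $G = \langle \sigma \rangle$ for a cyclic permutation $\sigma$ of $Y$. The vertices of $\Delta_f^{[G]}$ fall into two classes indexed by $z \in Z$: an ``incoming'' orbit $H_z^+ = \{\,y \to z : y \in Y\,\}$ and an ``outgoing'' orbit $H_z^- = \{\,z \to y : y \in Y\,\}$. The $G$-invariant directed bipartite graphs are parameterized by pairs $(S^+, S^-) \in 2^Z \times 2^Z$, and by monotonicity plus isomorphism-invariance the pairs whose associated graph fails $f$ form a downward-closed set depending only on the triple $(|S^+ \cap S^-|,\ |S^+ \setminus S^-|,\ |S^- \setminus S^+|)$. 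Computing $\chi(\Delta_f^{[G]})$ as an alternating triple-binomial sum and forcing it to equal $1$ via Theorem~\ref{fpt} leaves no option but $f$ trivial.

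For step~(iii), I would establish the recurrence $C'_n \geq \min\{C'_{n-1},\ 2\lfloor n/2 \rfloor \lceil n/2 \rceil\}$ by a three-case analysis on the values of $h$ on two reference directed graphs: $R_\star$, the union of an in-star and an out-star at a fixed vertex, and $R_\bullet = K^\to_{n-1}$, the complete directed graph on $n-1$ vertices. If $h(R_\star) = 0$ or $h(R_\bullet) = 1$, one obtains a nontrivial monotone directed property on $n-1$ vertices, exactly as in Proposition~\ref{generallowerbound}. Otherwise, fix the reference $K^\to_m$ on $m = \lfloor n/2 \rfloor$ vertices and let $P$ range over directed bipartite graphs on $(\{1,\ldots,m\},\ \{m+1,\ldots,n\})$; the induced property $h'(P) = h(P \cup K^\to_m)$ is nontrivial (using both inequalities above together with monotonicity), and step~(ii) supplies $D(h) \geq 2m(n-m)$. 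Iterating the recurrence down to the largest prime $p \leq n$ yields $C'_n \geq \min\{p(p-1),\ (n-1)^2/2\}$, and the prime number theorem gives $p \geq n - o(n)$, so both quantities are $n^2/2 - o(n^2)$. I expect step~(ii) to be the main obstacle: unlike Yao's original setting, where invariant graphs are controlled by a single integer, the directed bipartite version requires a triple-indexed Euler-characteristic calculation, and care is needed to show that nontriviality of $f$ prevents the alternating sum from equaling~$1$.
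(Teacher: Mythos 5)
The paper does not give a proof of King's theorem; it is stated without argument in the closing survey section, so there is no ``paper's own proof'' to compare against. Evaluating your proposal on its merits, steps~(i) and~(iii) are reasonable: the affine group over $\mathbb{F}_{p^k}$ is transitive on \emph{ordered} pairs of distinct elements, which is precisely what the directed setting demands, and the three-case reduction in step~(iii) is a faithful analogue of Proposition~\ref{generallowerbound}. The difficulty you flag in step~(ii), however, is not a matter of ``care'' --- the step is false as stated.

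The directed analogue of Yao's theorem does not hold. Consider disjoint finite sets $Y$, $Z$ and define $f$ on directed bipartite graphs by $f(G)=1$ iff $G$ contains at least one edge directed from $Y$ to $Z$. This is monotone increasing, invariant under $\Sym(Y)\times\Sym(Z)$, and nontrivial, yet an algorithm that queries only the $|Y|\cdot|Z|$ edges directed from $Y$ to $Z$ computes $f$, so $D(f)\leq |Y|\,|Z| < 2\,|Y|\,|Z|$; the property is not evasive. The failure is visible in your own Euler-characteristic computation: for this $f$ the $G$-invariant complex $\Delta_f^{[G]}$ has $\chi = 1$ (the alternating sum factors, and the factor coming from the $S^-$ coordinate is $(1-1)^{|Z|}=0$, making the nontrivial contribution vanish), so Theorem~\ref{fpt} yields no contradiction. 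Consequently step~(iii) as written does not follow: the induced property $h'(P)=h(P\cup K_m^{\to})$ is a directed bipartite property, and you cannot conclude $D(h')\geq 2m(n-m)$ from nontriviality alone. To salvage the argument you would need to exploit additional structure on $h'$ inherited from the fact that $h$ is invariant under the full symmetric group on $V$ --- for instance, a symmetry coupling the two edge-directions --- or abandon the bipartite reduction and use a different invariant-subcomplex calculation. As it stands, the proposal has a genuine gap at step~(ii), and the recurrence in step~(iii) is unsupported.
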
}

\noindent Triesch \cite{triesch1994, triesch1996} proved multiple results about the
evasiveness of particular subclasses of monotone graph properties.

Korneffel and Triesch improved on the asymptotic bound of
\hbox{Theorem}~\ref{lowerboundtheorem} by using a different group action on the set of
vertices.  Let $V$ be a set of size $n$, and let $p$ be a prime that is close to $\left(
\frac{2}{5} \right) n$. Break the set $V$ up into disjoint subsets $V_1$, $V_2$, and
$V_3$, with $\left| V_1 \right| = \left| V_2 \right| = p$ and $\left| V_3 \right| = n -
2p$.   Let $\mathbf{P}$ be the class of tripartite graphs on $(V_1, V_2, V_3)$ which,
when taken together with the complete graphs on the sets $V_i$, do not satisfy property
$h$.  The abelian group
\begin{eqnarray}
G = \mathbb{Z} / p \mathbb{Z} \times \mathbb{Z} / p \mathbb{Z}
\times \mathbb{Z} / (n - 2p ) \mathbb{Z}
\end{eqnarray}
acts on the class $\mathbf{P}$ by cyclically permuting the elements of $V_1$, $V_2$,
and~$V_3$. From this action and some other arguments, the authors are able to prove the
following.

{\makeatletter
\newtheoremstyle{nowthm}{4pt plus6pt minus4pt}{0pt}{\upshape}{0pt}{\bfseries}{}{.6em}
  {\rule{\textwidth}{.5pt}\par\vspace*{-1pt}\newline\thmname{#1}\thmnumber{\@ifnotempty{#1}{\hspace*{3.65pt}}{#2}$\!\!$}
  \thmnote{{\the\thm@notefont\bf (#3).}}}
\def\@endtheorem{\par\vspace*{-7.8pt}\noindent\rule{\textwidth}{.5pt}\vskip8pt plus6pt minus4pt}
\ignorespaces \makeatother

\begin{theorem}[Korneffel and Triesch \cite{kt2010}]
Let $C_n$ denote the least decision-tree complexity that occurs among all nontrivial
monotone-increasing graph properties of order~$n$.  Then,
\begin{eqnarray}
C_n \geq \frac{8 n^2}{25} - o ( n^2 ). \hskip0.2in
\end{eqnarray}
\end{theorem}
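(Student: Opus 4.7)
The plan is to adapt the inductive case-analysis of Proposition~\ref{generallowerbound}, replacing the bipartite reduction in the ``hard'' case by a tripartite one. Given a nontrivial monotone-increasing $h \colon \mathbf{G}(V) \to \{0,1\}$ of order $n$, I would pick a prime $p = \frac{2n}{5} - o(n)$ (which exists by the prime number theorem) and partition $V = V_1 \sqcup V_2 \sqcup V_3$ with $|V_1| = |V_2| = p$ and $|V_3| = n - 2p$. I would first perform a case split on the values of $h$ at a small collection of extremal graphs (analogous to $h(K_{1,n-1})$ and $h(K_{n-1})$ in Proposition~\ref{generallowerbound}, but expanded so that in every easy branch $h$ induces a nontrivial monotone graph property on a smaller vertex set). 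In those easy branches one invokes induction on $n$ to obtain $D(h) \geq \frac{8(n-1)^2}{25} - o(n^2) = \frac{8n^2}{25} - o(n^2)$.

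In the remaining case I would pass to the induced tripartite graph property $h'(H) := h(H \cup K_{V_1} \cup K_{V_2} \cup K_{V_3})$ on tripartite graphs $H$ over $(V_1, V_2, V_3)$. By the case assumption $h'$ is nontrivial, and by the tripartite analog of Theorem~\ref{collapsibilitytheorem} the complex $\Delta_{h'}$ would be collapsible if $h'$ were nonevasive. Since $h'$ has $p^2 + 2p(n-2p)$ variables (the tripartite edges) and $D(h) \geq D(h')$, evasiveness of $h'$ gives $D(h) \geq p^2 + 2p(n-2p) = \frac{8n^2}{25} - o(n^2)$, as desired. To prove the evasiveness of $h'$ I would apply Theorem~\ref{fpt} to the action of the abelian group $G := \mathbb{Z}/p\mathbb{Z} \times \mathbb{Z}/p\mathbb{Z} \times \mathbb{Z}/(n-2p)\mathbb{Z}$ on $\Delta_{h'}$ by independent cyclic rotations of the three parts; the subgroup $G' := \mathbb{Z}/p\mathbb{Z} \times \mathbb{Z}/p\mathbb{Z}$ is normal of order $p^2$ with cyclic quotient $G/G' \cong \mathbb{Z}/(n-2p)\mathbb{Z}$, so the hypothesis of Theorem~\ref{fpt} holds.

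Supposing $h'$ is nonevasive for contradiction, Theorem~\ref{fpt} yields $\chi(\Delta_{h'}^{[G]}) = 1$. Since $G$ has exactly three orbits on the tripartite edge set, namely $V_1 \times V_2$, $V_1 \times V_3$, and $V_2 \times V_3$, the complex $\Delta_{h'}^{[G]}$ is a subcomplex of the $2$-simplex on these three orbits, and $\chi(\Delta_{h'}^{[G]})$ is determined by a handful of booleans recording which of the $G$-invariant tripartite graphs fail $h'$. Combining this Euler identity with monotonicity of $h'$ and the $V_1 \leftrightarrow V_2$ isomorphism invariance of $h$, together with the case assumptions from the first paragraph, one then derives a contradiction. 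The main obstacle is exactly this final step: the identity $\chi(\Delta_{h'}^{[G]}) = 1$ alone has several consistent solutions, so the initial case split must be engineered carefully enough that every surviving configuration is ruled out by the combined constraints of monotonicity, isomorphism invariance, and the case hypotheses. This intricate bookkeeping is what distinguishes the argument of \cite{kt2010} from the cleaner $p^k$ argument of \textit{\nameref{mainresultsection}}, and it is the technical heart of the proof.
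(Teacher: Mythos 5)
The paper does not contain a proof of the Korneffel--Triesch theorem; it is stated only in the survey section, with a one-paragraph sketch describing the tripartite decomposition and the group action. So there is no proof in the text against which your proposal can be literally compared. That said, your outline does agree with the sketch: the partition $V = V_1 \sqcup V_2 \sqcup V_3$ with $|V_1| = |V_2| = p \approx 2n/5$, the abelian group $G = \mathbb{Z}/p\mathbb{Z} \times \mathbb{Z}/p\mathbb{Z} \times \mathbb{Z}/(n-2p)\mathbb{Z}$ acting by independent rotations, the reduction to the induced tripartite property $h'$, and the count $p^2 + 2p(n-2p) = \frac{8n^2}{25} - o(n^2)$ are all consistent with what the paper says and are numerically correct. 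The verification that $G' = \mathbb{Z}/p\mathbb{Z} \times \mathbb{Z}/p\mathbb{Z}$ is a normal $p$-subgroup with cyclic quotient, so that Theorem~\ref{fpt} applies, is also correct.

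The difficulty is the step you yourself flag as the ``technical heart'' and do not supply, and it is a genuine gap, not a routine verification. Since $G$ has exactly three orbits on the tripartite edges, $\Delta_{h'}^{[G]}$ is a subcomplex of the $2$-simplex on vertices $A = V_1 \times V_2$, $B = V_1 \times V_3$, $C = V_2 \times V_3$. Writing $a, b, c, d$ for the value of $h'$ on (respectively) the orbit $A$, the orbit $B$ (which equals the value on $C$ by the $V_1 \leftrightarrow V_2$ symmetry of $h$), the union $A \cup B$ (which equals that on $A \cup C$), and the union $B \cup C$, monotonicity gives only $a \le c$, $b \le c$, $b \le d$. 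Together with $h'$ being $1$ on the full tripartite graph, there remain three profiles satisfying $\chi(\Delta_{h'}^{[G]}) = 1$ that Theorem~\ref{fpt} does not exclude: $(a,b,c,d) = (0,1,1,1)$ (the complex is a single vertex), $(1,0,1,0)$ (an edge with its two endpoints), and $(0,0,0,1)$ (a two-edge path). None of these is ruled out by monotonicity plus the $V_1 \leftrightarrow V_2$ symmetry, and the case split you invoke is left unspecified. Note in particular that the obvious analogues of the case hypotheses in Proposition~\ref{generallowerbound}, namely $h(K_{n-1}) = 0$ and $h(K_{1,n-1}) = 1$, do not even force $h'(\emptyset) = h(K_{V_1} \cup K_{V_2} \cup K_{V_3}) = 0$, because $K_{V_1} \cup K_{V_2} \cup K_{V_3}$ is not a subgraph of $K_{n-1}$ once every part has size at least two (as happens for $p \approx 2n/5$). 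Eliminating the surviving profiles is precisely the ``some other arguments'' the paper alludes to and is the substance of \cite{kt2010}; as written, the proposal correctly assembles the topological machinery but does not close the argument.
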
}

The work of Chakrabati et~al. \cite{cks2002} considers the
\textbf{subgraph containment property}.  For any finite graph $X$, let $h_{X,n}$ denote
the graph property for graphs of size $n$ which assigns a value of $1$ to a graph if and
only if it contains a subgraph isomorphic to $X$. This property is studied using another
group action. For appropriate values of $n$, the vertex set $V$ can be partitioned into
sets $V_1, \ldots , V_m$, where $\left| V_i \right| = q^{\alpha_i}$ for some prime power
$q$ which is greater than or equal to the number of vertices in $X$. Choose isomorphisms
$V_i \cong \mathbb{F}_{q^{\alpha_i}}$. Let $G$ be the group of permutations of $V$ that
is generated by the group $\mathbb{F}_{q^{\alpha_1}}^+ \times \ldots \times
\mathbb{F}_{q^{\alpha_m}}^+$ (acting on the sets $V_1 , \ldots , V_m$ in a component-wise
manner) and the group $\mathbb{F}_q^*$ (acting simultaneously on all the sets
$V_i$). If $h_X$ were nonevasive, then there would exist nontrivial $G$-invariant graphs
which do not satisfy $h_X$.  Such graphs would have a uniform structure and would
correspond simply to graphs on the set $\{ 1, 2, \ldots, m \}$.

With this reduction the authors are able to prove that $h_{X,n}$ is evasive for all $n$
within a set of positive density.  In general, the following asymptotic bound holds:
\begin{eqnarray}
D ( h_{X,n} ) \geq \frac{n^2}{2} - O ( n ).
\end{eqnarray}
This approach was further developed by Babai et al. \cite{bbkk2010}, who proved that
$h_{X,n}$ is evasive for almost all $n$, and that
\begin{eqnarray}
D ( h_{X, n } ) \geq \binom{n}{2} - O ( 1  ).
\end{eqnarray}

As one can observe from recent papers on evasiveness, advances in the strength of results
are paralleled by substantial increases in the difficulty of the proofs!    The
increase in difficulty has become fairly steep at this point.  Perhaps a new basic
insight, like the one in \cite{kss1984},  will be necessary to proceed further toward the
Karp conjecture.

\appendix

\chapter{Appendix}

\section{Long Exact Sequences of Homology Groups}\label{snakelemmaappendix}

The goal of this part of the appendix is to give a complete proof of the following
proposition. Our method is based on \cite{ash2007}.

\begin{proposition}\label{realsnakelemma}
Suppose that there is an exact sequence of complexes:
\begin{eqnarray}\label{bigcplxdiagram}
\scalebox{0.75}{\xymatrix{
&  0 \ar[d] & 0 \ar[d] & 0 \ar[d] &  \\
0 \ar[r] & X_m \ar[r]^F \ar[d]^d  & Y_m \ar[r]^G \ar[d]^d & Z_m \ar[r] \ar[d]^d & 0 \\
0 \ar[r] & X_{m-1} \ar[r]^F \ar[d]  & Y_{m-1} \ar[r]^G \ar[d] & Z_{m-1} \ar[r] \ar[d] & 0 \\
&  \vdots \ar[d] & \vdots \ar[d] & \vdots \ar[d] &  \\
0 \ar[r] & X_0 \ar[r]^F \ar[d]  & Y_0 \ar[r]^G \ar[d] & Z_0 \ar[r] \ar[d] & 0 \\
&  0 & 0 & 0 &  \\
}
}
\end{eqnarray}
Then, there exist homomorphisms $\gamma_n \colon H_n ( Z_\bullet ) \to H_{n-1} (
X_\bullet )$ for \hbox{$n \in \{ 1, 2, \ldots, m \}$} such that the sequence
\begin{eqnarray}\label{snakesequence}
\scalebox{0.75}{\xymatrix{ 0 \ar[r] & H_m ( X_\bullet ) \ar[r] & H_m ( Y_\bullet ) \ar[r] &
H_m ( Z_\bullet ) \ar[lldd]^{\gamma_m} \\
\\
& H_{m-1} ( X_\bullet ) \ar[r] & H_{m-1} ( Y_\bullet)
\ar[r] & H_{m-1} ( Z_\bullet) \ar[lldd]^{\gamma_{m-1}} \\
\\
& &  \\
& & \vdots \\
& &  & \ar[lldd]^{\gamma_1} \\
\\
 & H_0 ( X_\bullet ) \ar[r] & H_0 ( Y_\bullet ) \ar[r] &
H_0 ( Z_\bullet ) \ar[r] & 0\\
}}
\end{eqnarray}
is exact.
\end{proposition}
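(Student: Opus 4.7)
The plan is to mimic the standard snake-lemma diagram chase on \eqref{bigcplxdiagram}. I first define the connecting homomorphism $\gamma_n$ and verify it is well defined, and then I check exactness of \eqref{snakesequence} at each of the three types of positions $H_n(X_\bullet)$, $H_n(Y_\bullet)$, and $H_n(Z_\bullet)$, together with the two endpoints.

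To construct $\gamma_n$, given a cycle $z \in \ker d_n^Z$ I use surjectivity of $G_n$ to pick any lift $y \in Y_n$ with $G_n(y) = z$. Since $G_{n-1}(dy) = d\,G_n(y) = dz = 0$, exactness in degree $n-1$ gives a unique $x \in X_{n-1}$ with $F_{n-1}(x) = dy$, and because $F_{n-2}(dx) = d\,F_{n-1}(x) = d^2 y = 0$, injectivity of $F_{n-2}$ forces $dx = 0$. I then set $\gamma_n([z]) = [x] \in H_{n-1}(X_\bullet)$. Well-definedness requires two checks: that replacing the lift $y$ by $y + F_n(x'')$ (the ambiguity coming from $\ker G_n = \im F_n$) changes $x$ by the exact element $dx''$; and that replacing $z$ by $z + d_{n+1}^Z z'$ changes $x$ by another $d$-boundary in $X_{n-1}$. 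Both are straightforward diagram chases, but this is the most delicate step and is where I would be most careful.

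Exactness at $H_n(Y_\bullet)$ is essentially the argument already sketched in the excerpt, using $G \circ F = 0$ in one direction and surjectivity of $G_n$ plus injectivity of $F_{n-1}$ in the other. For exactness at $H_n(Z_\bullet)$: the composition $H_n(Y_\bullet) \to H_n(Z_\bullet) \xrightarrow{\gamma_n} H_{n-1}(X_\bullet)$ is zero, because if $z = G_n(y)$ with $dy = 0$ then the recipe for $\gamma_n$ returns $0$; conversely, if $\gamma_n([z]) = 0$ then $dy = F_{n-1}(d x')$ for some $x' \in X_n$, so $y - F_n(x')$ is a genuine cycle in $Y_n$ mapping under $G_n$ to $z$. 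For exactness at $H_{n-1}(X_\bullet)$: the composition $\gamma_n$ followed by $F_\ast$ is zero by construction since $F_{n-1}(x) = dy$ is a boundary; conversely, if $[F_{n-1}(x)] = 0$ so that $F_{n-1}(x) = dy$ for some $y$, then $G_n(y) \in \ker d_n^Z$ and by construction $\gamma_n([G_n(y)]) = [x]$.

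The endpoints receive separate treatment: injectivity of $H_m(X_\bullet) \to H_m(Y_\bullet)$ uses that no $Y_{m+1}$ exists, so any cycle $x$ with $[F_m(x)] = 0$ satisfies $F_m(x) = 0$ and hence $x = 0$ by injectivity of $F_m$, while surjectivity of $H_0(Y_\bullet) \to H_0(Z_\bullet)$ is immediate from surjectivity of $G_0$ on the chain level. The main obstacle in the proof is not conceptual but organizational: there are six independent exactness verifications plus the two well-definedness checks, each a short diagram chase of the same flavor, and the real work is keeping the choices of lift, the distinctions between $\ker$ and $\im$, and the indices $n$ versus $n-1$ straight throughout. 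I would lay out the notation explicitly at the outset and treat each containment in its own short paragraph to minimize the risk of confusion.
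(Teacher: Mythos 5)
Your proposal is correct and the underlying content is exactly the standard snake-lemma diagram chase used in the paper, though the packaging differs slightly. The paper first proves a two-row ``ker/coker'' snake lemma (and a weakened version without the outer $0$'s) and then obtains the long exact sequence by applying that lemma to the induced diagram
\[
\coker d_{n+1}^X \to \coker d_{n+1}^Y \to \coker d_{n+1}^Z \to 0, \qquad 0 \to \ker d_{n-1}^X \to \ker d_{n-1}^Y \to \ker d_{n-1}^Z,
\]
with vertical maps induced by $d_n$, so that the kernels and cokernels of the vertical maps are precisely $H_n$ and $H_{n-1}$; you instead run the identical chase directly on representatives of homology classes. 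The paper's modular route has the advantage that the delicate well-definedness and exactness verifications are isolated in the $m=1$ setting and then reused for free, whereas your direct route keeps the indices $n$ and $n\pm 1$ explicit throughout and avoids introducing the auxiliary $\coker/\ker$ rows; either is a legitimate presentation of the same argument. One small thing you do that the paper does not make explicit: you give separate arguments for injectivity at $H_m(X_\bullet)$ and surjectivity at $H_0(Z_\bullet)$, which the paper leaves to the reader (they follow because $d_{m+1}=0$ and $d_0=0$ make the relevant rows of the $\coker/\ker$ diagram short exact, so the full six-step chase applies at the ends).
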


We begin by addressing the case in which $m = 1$.  Suppose that we have an exact sequence
of complexes
\begin{eqnarray}
\xymatrix{
& 0 \ar[d] & 0 \ar[d] & 0 \ar[d] \\
0 \ar[r] & X_1 \ar[r]^{F_1} \ar[d]^{d^X}  & Y_1 \ar[r]^{G_1} \ar[d]^{d^Y} & Z_1 \ar[r] \ar[d]^{d^Z} & 0 \\
0 \ar[r] & X_0  \ar[d] \ar[r]^{F_0}  & Y_0  \ar[d] \ar[r]^{G_0} & Z_0  \ar[d] \ar[r]  & 0 \\
& 0 & 0 & 0}
\end{eqnarray}
Then, $H_1 ( X_\bullet )$ is equal to $\ker d^X$, and $H_0 ( X_\bullet )$ is equal to the
group $X_0 / \im d^X$, which we denote by $\coker d^X$.   (The latter group is called the
``cokernel'' of $d^X$.)  Similar statements hold for $Y_\bullet$ and $Z_\bullet$.

Define a function
\begin{eqnarray}
\gamma \colon \ker d^Z \to \coker d^X
\end{eqnarray}
as follows.  Suppose that $z_1$ is an element of $\ker d^Z$.   Choose an element $y_1 \in
Y_1$ which maps to $z_1$.  The element $dy_1 \in Y_0$ maps to zero under $G_0$, and thus
we can find a (unique) element $x_0 \in X_0$ which maps to $dy_1$.  Let $\gamma ( x ) \in
\coker d^X$ be the coset containing $x_0$.

Note that the value of $\gamma ( x )$ does not depend on the choice of $y_1$, since if we
choose a different element $\overline{y}_1$ and obtain a different element
$\overline{x}_0 \in X_0$, then we will have $\overline{y}_1 - y_1 = F_1 ( x_1 )$ for some
$x_1$, and thus $\overline{x}_0 - x_0 = d x_1$, and $\overline{x}_0$ and $x_0$ will lie
in the same coset of $\coker d^X$. Note also that $\gamma ( x )$ is a homomorphism: if
$z_1 = z'_1 + z''_1$, then for any chosen pre-images $y_1$, $y'_1$, and $y''_1$, the
quantities $y_1$ and $(y'_1 + y''_1 )$ will differ by an element of $\im F_1$, and this
difference will likewise vanish when we map to $\coker d^X$.

Consider the sequence
\begin{eqnarray}\label{snakeseq}
\xymatrix{ 0 \ar[r] & \ker d^X \ar[r] & \ker d^Y \ar[r] &
\ker d^Z \ar[lldd]^{\gamma} \\
\\
& \coker d^X \ar[r] & \coker d^Y
\ar[r] & \coker d^Z \ar[r] & 0\\
}
\end{eqnarray}
It is easy to see that this sequence is a complex.  (The verification of this is left to
the reader.)  We wish to prove that the sequence is in fact exact.  We do this in six
steps.
\begin{enumerate}
\item \textbf{Exactness at $\ker d^X$.}  Immediate.
\item \textbf{Exactness at $\ker d^Y$.}  Suppose that
$y_1 \in \ker d^Y$ is an element that is killed by the map to $\ker d^Z$.  Then, there
exists an element $x_1 \in X_1$ which maps to $y_1$.  We must have $d x_1 = 0$ (since
otherwise $y_1$ could not be in the kernel of $d^Y$) and so $y_1$ lies in the image of
$\ker d^X$.
\item \textbf{Exactness at $\ker d^Z$.}  Suppose
that $z_1 \in \ker d^Z$ is such that $\gamma ( z_1 ) = 0$. Then, if we let $y_1$ and
$x_0$ be the elements chosen in the definition of $\gamma$, we must have $x_0 = d x_1$
for some \hbox{$x_1 \in X_1$}. The element $y'_1 := y_1 - F ( x_1 )$ maps to $z_1$ and
\hbox{satisfies} \hbox{$d y'_1 = 0$}. Therefore, $z_1$ is in the image of $[ \ker d^Y \to
\ker d^Z ]$.
\item \textbf{Exactness at $\coker d^X$.}  Suppose
that a coset of the form $x_0 + \im d^X \in \coker d^X$ maps to zero in $\coker d^Y$.
Then, there exists $y_1 \in Y_1$ such that $d y_1 = F ( x_0 )$.  Therefore the element
$z_1 := G ( y_1)$ maps to $(x_0 + \im d^X)$ under $\gamma$.
\item \textbf{Exactness at $\coker d^Y$.}  Suppose
that a coset of the form $y_0 + \im d^Y \in \coker d^Y$ maps to zero in $\coker d^Z$.
Then, there exists $z_1 \in Z_1$ such that $d z_1 = G ( y_0 )$.  If we let $y_1 \in Y_1$
be an element which maps to $z_1$, then we have $y_0 - d y_1 = F ( x_0 )$ for some $x_0$.
Therefore the coset $(x_0 + \im d^X) \in \coker d^X$ maps to $(y_0 + \im d^Y )$.
\item \textbf{Exactness at $\coker d^Z$.}  Immediate.
\end{enumerate}
We conclude that sequence~(\ref{snakeseq}) is indeed exact.

To prove Proposition~\ref{realsnakelemma} in general. we will need the following lemma,
which is a modified version of what we just proved.

\begin{lemma}\label{simplesnakelemma}
Suppose that the following is a diagram of maps of abelian groups.
\begin{eqnarray}
\xymatrix{
&  A_1 \ar[d]^{d^A}  \ar[r]^{f_1}  & B_1 \ar[d]^{d^B} \ar[r]^{g_1} & C_1 \ar[d]^{d^C} \ar[r] & 0 \\
0 \ar[r] & A_0  \ar[r]^{f_0} & B_0 \ar[r]^{g_0} & C_0 }
\end{eqnarray}
Suppose that the maps are compatible ($d \circ f = f \circ d$ and $d \circ g = g \circ
d$) and that the top and bottom rows are both exact.  Then, there exists a homomorphism
$\lambda \colon \ker f \to \coker d$ such that the sequence
\begin{eqnarray*}
\xymatrix{\ker d \ar[r] & \ker e \ar[r] &
\ker f \ar[r]^\lambda & \coker d \ar[r] & \coker e \ar[r]
& \coker f }
\end{eqnarray*}
is exact.
\end{lemma}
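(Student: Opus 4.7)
The plan is to prove this lemma by a diagram chase essentially identical to the one we just carried out for the six-term sequence~(\ref{snakeseq}), the only difference being that the hypotheses here are weaker (we do not require $f_1$ to be injective nor $g_0$ to be surjective), so exactness is obtained only at the six interior spots and not at the two outer ends.

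First, I would construct the connecting homomorphism $\lambda$. Given $c_1 \in \ker d^C$, use the surjectivity of $g_1$ (from exactness of the top row at $C_1$) to pick some $b_1 \in B_1$ with $g_1(b_1) = c_1$. Then $g_0(d^B b_1) = d^C g_1(b_1) = d^C c_1 = 0$, so by exactness of the bottom row at $B_0$ there exists $a_0 \in A_0$ with $f_0(a_0) = d^B(b_1)$, and this $a_0$ is unique because $f_0$ is injective. Set $\lambda(c_1) := a_0 + \im d^A \in \coker d^A$. Well-definedness (independence of the choice of $b_1$) and additivity are checked exactly as in the earlier proof: any two preimages of $c_1$ under $g_1$ differ by an element of $\ker g_1 = \im f_1$, which perturbs the resulting $a_0$ by an element of $\im d^A$.

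Next I would verify that each composition in the six-term sequence is zero (so that we have a complex) and then check exactness at each of the six spots. Each verification is a diagram chase entirely analogous to the corresponding argument for~(\ref{snakeseq}): at $\ker d^B$, an element killed by the map to $\ker d^C$ lifts along $f_1$ to some $a_1 \in A_1$, and compatibility of $d$ forces $d^A(a_1)=0$; at $\ker d^C$, an element killed by $\lambda$ can be adjusted by an element of $f_1(A_1)$ so that its chosen preimage lies in $\ker d^B$; at $\coker d^A$, a class $a_0 + \im d^A$ that maps to zero in $\coker d^B$ admits $b_1 \in B_1$ with $d^B(b_1) = f_0(a_0)$, and then $g_1(b_1) \in \ker d^C$ maps to it under $\lambda$; at $\coker d^B$, a class $b_0 + \im d^B$ killed in $\coker d^C$ admits $c_1 \in C_1$ with $d^C(c_1) = g_0(b_0)$, and lifting $c_1$ to $b_1 \in B_1$ gives $b_0 - d^B(b_1) \in \ker g_0 = \im f_0$.

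The main obstacle, if any, is simply the bookkeeping: carefully choosing the correct element at each step of each of the six chases and tracking which hypotheses (surjectivity of the appropriate horizontal map and injectivity of the appropriate one) are invoked. There are no new conceptual ingredients beyond what was used in the six-term argument above, so the proof is essentially a mechanical repetition of that argument under the weaker hypotheses, which is precisely why the outer two zeros of~(\ref{snakeseq}) disappear here.
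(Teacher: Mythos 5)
Your proposal is correct and takes exactly the approach of the paper, which proves this lemma simply by noting that the earlier six-term diagram chase goes through unchanged once the two steps that required injectivity of the leftmost horizontal map and surjectivity of the rightmost (exactness at the two endpoints) are dropped. Your detailed verification of $\lambda$'s well-definedness and of exactness at the four interior terms matches the paper's intended argument; the only quibble is a minor miscount in your framing (there are four interior spots, not six), but your subsequent enumeration correctly identifies all four.
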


\begin{proof}
This follows by repeating the previous proof with steps (1) and (6) omitted.
\end{proof}

Now we are ready to prove Proposition~\ref{realsnakelemma} for arbitrary $m$. Take any $n
\in \{ 1, 2, \ldots, m \}$.  As we know, in the diagram
\begin{eqnarray}
\xymatrix{
& \coker d_{n+1}^X \ar[r] \ar[d] & \coker d_{n+1}^Y \ar[r] \ar[d] & \coker d_{n+1}^Z \ar[r] \ar[d] & 0 \\
0 \ar[r] & \ker d_{n-1}^X \ar[r] & \ker d_{n-1}^Y \ar[r] & \ker d_{n-1}^Z   }
\end{eqnarray}
induced by diagram~(\ref{bigcplxdiagram}), both rows are exact.  Therefore
Lemma~\ref{simplesnakelemma} implies that
\begin{eqnarray}
\xymatrix{ H_n ( X_\bullet ) \ar[r] & H_n ( Y_\bullet ) \ar[r] &  H_n ( Z_\bullet )  \ar[lld]^{\gamma_n}\\
H_{n-1} ( X_\bullet ) \ar[r] & H_{n-1} ( Y_\bullet ) \ar[r] & H_{n-1} ( Z_\bullet ) }
\end{eqnarray}
is an exact sequence for some homomorphism $\gamma_n$.  This completes the proof.

\section{Properties of Barycentric Subdivision}\label{subdivisionappendix}

This part of the appendix is a supplement to
\textit{\nameref{barycentricsubdivisionsection}}.  Our purpose here is to prove two
facts:
\begin{enumerate}
\item \label{euleritem} For any finite abstract simplicial complex $\Delta$,
the Euler characteristic of $\bar ( \Delta )$ is the same
as that of $ \Delta $.
\item If $\Delta$ is collapsible,
then $\bar ( \Delta )$ is also collapsible.
\end{enumerate}
A more extensive discussion of the relationship between collapsibility and barycentric
subdivision can be found in \cite{welker1999}.

We begin by considering the Euler characteristic property.  Recall that the Euler
characteristic of a subset $S$ of a simplicial complex $\Delta$ is given by
\begin{eqnarray}
\chi ( S ) = \sum_{n \geq 0} (-1)^n \left| \left\{
Q \in S , \dim ( Q ) = n \right\} \right|.
\end{eqnarray}
Let us define three basic abstract simplicial complexes. For any $n \geq 1$, let $\Pi_n$
denote the abstract simplicial complex consisting of the set of all nonempty subsets of
$\{ 0, 1, \ldots, n \}$. Let $\Theta_n = \Pi_n \smallsetminus \{ [0, 1, 2, \ldots , n ]
\}$ and $\Lambda_n = \Theta_n \smallsetminus \{ [ 0, 1, 2, \ldots, n-1 ] \}$. It is easy
to see that $\Pi_n$ and $\Lambda_n$ are both collapsible.

If $\Delta$ is an abstract simplicial complex and $t$ is an element not contained in the
vertex set of $\Delta$, then let us say that \textbf{cone} of $\Delta$ over~$t$, denoted
by $t \star \Delta$, is the simplicial complex that arises from adding to $\Delta$ all
sets of the form $\{ t \} \cup Q$ with $Q \in \Delta$ or $Q = \emptyset$.  Note that $n
\star \Pi_{n-1} = \Pi_n$.

\begin{lemma}\label{stareulerlemma}
Let $\Delta$ be a finite abstract simplicial complex, and let $t$ be an element that is
not contained in the vertex set of $\Delta$.  Then, $\chi ( t \star \Delta ) = 1$.
\end{lemma}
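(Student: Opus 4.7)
The plan is to prove this by a direct counting argument, breaking the simplices of $t \star \Delta$ into those inherited from $\Delta$ and those that genuinely involve the cone point $t$. No algebraic topology is needed here; the result is purely combinatorial.

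First I would fix notation: let $f_n$ denote the number of $n$-simplices of $\Delta$, so that $\chi(\Delta) = \sum_{n \geq 0} (-1)^n f_n$. Unpacking the definition of the cone, every simplex of $t \star \Delta$ is either (i) a simplex $Q$ already in $\Delta$, (ii) the singleton $\{t\}$ (corresponding to the case $Q = \emptyset$ in the definition), or (iii) a set of the form $\{t\} \cup Q$ with $Q \in \Delta$. A simplex of type (iii) has dimension $\dim(Q) + 1$, since adjoining the new vertex $t$ increases cardinality by one. Hence the number of $n$-simplices of $t \star \Delta$ is $f_0 + 1$ when $n = 0$, and $f_n + f_{n-1}$ when $n \geq 1$.

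Plugging these counts into the definition of the Euler characteristic, I would compute
\begin{eqnarray*}
\chi(t \star \Delta) &=& (f_0 + 1) + \sum_{n \geq 1} (-1)^n (f_n + f_{n-1}) \\
&=& 1 + \sum_{n \geq 0} (-1)^n f_n + \sum_{n \geq 1} (-1)^n f_{n-1} \\
&=& 1 + \chi(\Delta) + \sum_{m \geq 0} (-1)^{m+1} f_m \\
&=& 1 + \chi(\Delta) - \chi(\Delta) \\
&=& 1,
\end{eqnarray*}
where in the third line I reindexed the second sum by $m = n - 1$. This yields the claimed equality.

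There is no real obstacle here; the only point that needs care is making sure the bookkeeping correctly accounts for the extra $0$-simplex $\{t\}$ (which corresponds to $Q = \emptyset$) separately from the coned simplices $\{t\} \cup Q$ for nonempty $Q \in \Delta$. Once that case is handled, the telescoping cancellation between $\chi(\Delta)$ and its shift is immediate and the argument is complete.
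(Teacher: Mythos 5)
Your argument is correct and rests on the same combinatorial fact as the paper's proof: the simplices of $t \star \Delta$ other than $\{t\}$ cancel in pairs $Q$ and $Q \cup \{t\}$ of adjacent dimensions, leaving only the contribution of $\{t\}$. The paper phrases this as a partition of $(t \star \Delta) \smallsetminus \{\{t\}\}$ into two-element sets of Euler characteristic zero, while you make the same cancellation explicit via face counts and a reindexed telescoping sum.
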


\begin{proof}
For any $Q \in \Delta$, the set
\begin{eqnarray}\label{twosimplexset}
\{ Q , Q \cup \Delta \}
\end{eqnarray}
has Euler characteristic zero.  The set $( t \star \Delta ) \smallsetminus \{ \{ t \} \}$
can be partitioned into such two-member sets. Therefore $\chi ( t \star \Delta ) = \chi (
\{ \{ t \} \} ) = 1$.
\end{proof}

\begin{proposition}\label{baryeulerappendixprop}
Let $\Sigma$ be finite abstract simplicial complex of dimension $n$.
Then, $\chi ( \bar ( \Sigma ) ) = \chi ( \Sigma )$.
\end{proposition}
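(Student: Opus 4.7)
The plan is to proceed by induction on the pair $( \dim \Sigma , | \Sigma | )$ ordered lexicographically. The base case $| \Sigma | = 1$ is immediate: then $\Sigma$ is a single $0$-simplex, $\bar ( \Sigma ) = \Sigma$, and both Euler characteristics equal $1$.

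For the inductive step, first I would choose a maximal simplex $Q \in \Sigma$ whose dimension $d$ is equal to $\dim \Sigma$, and set $\Sigma' = \Sigma \smallsetminus \{ Q \}$. Removing a single $d$-simplex gives $\chi ( \Sigma ) = \chi ( \Sigma' ) + (-1)^d$, and (since $| \Sigma' | < | \Sigma |$ and $\dim \Sigma' \leq \dim \Sigma$) the lex-inductive hypothesis applies to $\Sigma'$ to give $\chi ( \bar ( \Sigma' ) ) = \chi ( \Sigma' )$.

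Next, I would analyze which simplices of $\bar ( \Sigma )$ are absent from $\bar ( \Sigma' )$: a chain $Q_1 \subsetneq \cdots \subsetneq Q_r$ in $\Sigma$ fails to lie in $\bar ( \Sigma' )$ exactly when some $Q_i$ equals $Q$, and the maximality of $Q$ forces $Q_r = Q$. So the new simplices are the singleton chain $\{ Q \}$ (of dimension $0$) together with chains $\{ Q_1 , \ldots , Q_{r-1} , Q \}$ (for $r \geq 2$) whose prefix $Q_1 \subsetneq \cdots \subsetneq Q_{r-1}$ is a chain of proper nonempty subsets of $Q$. These prefixes are exactly the simplices of $\bar ( \Theta_d )$, where $\Theta_d$ is the boundary complex of the $d$-simplex introduced earlier in the appendix. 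Tracking signs (appending $Q$ raises each prefix's dimension by one), a brief count yields
\[
\chi ( \bar ( \Sigma ) ) - \chi ( \bar ( \Sigma' ) ) = 1 - \chi ( \bar ( \Theta_d ) ) .
\]

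To finish I would invoke the outer induction on $\Theta_d$: since $\dim \Theta_d = d - 1 < \dim \Sigma$, the dimension component of the lex order strictly decreased, so the inductive hypothesis applies and gives $\chi ( \bar ( \Theta_d ) ) = \chi ( \Theta_d )$. A direct calculation using $\Theta_d = \Pi_d \smallsetminus \{ [0, 1, \ldots, d] \}$ and $\chi ( \Pi_d ) = 1$ produces $\chi ( \Theta_d ) = 1 - (-1)^d$. Substituting gives $\chi ( \bar ( \Sigma ) ) - \chi ( \bar ( \Sigma' ) ) = (-1)^d = \chi ( \Sigma ) - \chi ( \Sigma' )$, so $\chi ( \bar ( \Sigma ) ) = \chi ( \Sigma )$, completing the step.

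The main subtlety is the inductive setup: $| \Theta_d |$ is not necessarily smaller than $| \Sigma |$, so a naive induction on the number of simplices alone would be circular. Using dimension as the primary induction parameter (with size as a tie-breaker) avoids this issue and is the key that makes the argument go through.
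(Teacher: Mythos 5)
Your proof is correct, and it tracks the same underlying computation as the paper's, but the organization is genuinely different in two respects. First, the paper computes $\chi\left(\bar(\Pi_n)\right) = 1$ by recognizing $\bar(\Pi_n)$ as a cone $[0,\ldots,n] \star \bar(\Theta_n)$ and applying the cone lemma (Lemma~\ref{stareulerlemma}), then subtracts $\chi(\bar(\Theta_n))$ to get $\chi\left(\bar(\Pi_n) \smallsetminus \bar(\Theta_n)\right) = (-1)^n$; you instead count the new chains directly (those whose top element is the deleted simplex $Q$), obtaining $1 - \chi(\bar(\Theta_d))$ without any cone machinery. Second, the paper inducts on dimension alone, deleting \emph{all} top-dimensional simplices in one step to drop to $\Sigma^{(n-1)}$, whereas you delete a single maximal top-dimensional simplex at a time, which forces the lexicographic ordering on $(\dim\Sigma, |\Sigma|)$ to keep the induction well-founded. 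Your remark at the end identifies exactly the circularity that the paper's one-shot-deletion avoids by construction. Both proofs bottom out in the same two facts — that $\chi(\bar(\Theta_d)) = \chi(\Theta_d)$ by the dimension-induction, and that $\chi(\Theta_d) = 1 - (-1)^d$ — so they buy the same result; yours trades the cone lemma for a slightly heavier induction scheme, and makes the chain-counting (which the paper waves past with ``an easy consequence of equation~\eqref{pivstheta}'') fully explicit.
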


\begin{proof}
We induct on $n$.  The base case ($n = 0$) is trivial.  Suppose that $n \geq 1$ and that
the statement is known to hold for all complexes of dimension less than $n$.

The Euler characteristic of $\Theta_n$ is
\begin{eqnarray}
\chi ( \Theta_n ) = \sum_{k = 0}^{n-1}
(-1)^k \binom{n+1}{k+1} = 1 - (-1)^n,
\end{eqnarray}
therefore by inductive assumption, $\chi ( \bar ( \Theta_n ) )$ is also equal to $1 -
(-1)^n$. The Euler characteristic of $\bar ( \Pi_n ) = [0, 1, \ldots , n] \star \bar (
\Theta_n )$ is $1$ by Lemma~\ref{stareulerlemma}. Therefore,
\begin{eqnarray}\label{pivstheta}
\chi ( \bar ( \Pi_n ) \smallsetminus \bar ( \Theta_n ) )
= 1 - (1 - (-1)^n ) = (-1)^n.
\end{eqnarray}

An easy consequence of equation~(\ref{pivstheta}) is that if $\Delta$ is an
\hbox{$n$-dimensional} abstract simplicial complex and $\Delta' \subset \Delta$ is a
subcomplex that arises from deleting a single $n$-simplex from $\Delta$, then
\begin{eqnarray}\label{singlesimplexdiff}
\chi ( \bar ( \Delta ) \smallsetminus \bar ( \Delta' ) )
& = & (-1)^n.
\end{eqnarray}
Let $\Sigma$ be a finite $n$-dimensional abstract simplicial complex.  Let $d$ be the
number of $n$-simplicies in $\Sigma$, and let $\Sigma^{(n-1)} \subseteq \Sigma$ be the
subcomplex that arises from deleting all $n$-simplicies. Then,
\begin{eqnarray}
\chi ( \bar ( \Sigma ) ) = \chi ( \bar ( \Sigma^{(n-1)} ))  + d \cdot (-1)^n.
\end{eqnarray}
Since $\chi ( \bar ( \Sigma^{(n-1)} ) ) = \chi ( \Sigma^{(n-1)} )$ by inductive
assumption, we have $\chi ( \bar ( \Sigma ) ) = \bar (\Sigma )$ as desired.
\end{proof}

Now we turn to collapsibility. If $\Sigma$ is an abstract simplicial complex and $\Sigma'
\subseteq \Sigma$ is a subcomplex, then let us say that \textbf{$\Sigma$ can be collapsed
onto $\Sigma'$} if there exists a sequence of elementary collapses (or equivalently, a
sequence of primitive elementary collapses),
\begin{eqnarray}
\Sigma = \Sigma_0, \Sigma_1 , \ldots, \Sigma_m = \Sigma'.
\end{eqnarray}

\begin{lemma}\label{conelemma}
If $\Sigma$ is a collapsible abstract simplicial complex and $t$ is not in the vertex set
of $\Sigma$, then the cone $ t \star \Sigma$ can be collapsed onto $\Sigma$.
\end{lemma}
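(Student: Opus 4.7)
The plan is to lift a collapsing sequence of $\Sigma$ up to the cone $t \star \Sigma$, performing a paired primitive collapse at the cone level for each primitive collapse of $\Sigma$, and then one final collapse to remove the apex. Throughout, the base complex $\Sigma$ will never be touched, so what we are collapsing away is exactly the cone part.

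Concretely, since $\Sigma$ is collapsible, it admits (by the same observation used in the proof of Theorem~\ref{acyclicitytheorem}) a sequence of \emph{primitive} elementary collapses
\begin{eqnarray*}
\Sigma = \Sigma_0, \Sigma_1, \Sigma_2, \ldots, \Sigma_m = \{\{v\}\},
\end{eqnarray*}
where passing from $\Sigma_{i-1}$ to $\Sigma_i$ deletes a pair $(\alpha_i, \beta_i)$ with $\alpha_i$ maximal in $\Sigma_{i-1}$, $\beta_i \subset \alpha_i$ free, and $\dim \alpha_i = \dim \beta_i + 1$. I would then define $T_0 = t \star \Sigma$ and $T_i = T_{i-1} \smallsetminus \{\, \{t\} \cup \alpha_i,\ \{t\} \cup \beta_i\,\}$, and claim that each step $T_{i-1} \to T_i$ is a primitive elementary collapse.

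The key verification (which I expect to be the main, though modest, obstacle) is that at every intermediate stage $T_{i-1}$, the simplex $\{t\} \cup \beta_i$ is indeed a free face of $\{t\} \cup \alpha_i$. Here one uses the structural description $T_{i-1} = \Sigma \cup \{\{t\} \cup Q \mid Q \in \Sigma_{i-1} \text{ or } Q = \emptyset\}$: any simplex strictly containing $\{t\} \cup \beta_i$ must contain $t$, hence has the form $\{t\} \cup Q$ with $\beta_i \subsetneq Q \in \Sigma_{i-1}$; since $\beta_i$ is free in $\Sigma_{i-1}$ with $\dim \alpha_i = \dim \beta_i + 1$, the only such $Q$ is $\alpha_i$. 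The same argument shows $\{t\} \cup \alpha_i$ is maximal in $T_{i-1}$, since $\alpha_i$ is maximal in $\Sigma_{i-1}$.

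After $m$ such collapses, $T_m = \Sigma \cup \{\{t\}, \{t,v\}\}$, and a single final primitive collapse removing the free face $\{t\}$ of $\{t,v\}$ yields $\Sigma$ exactly. Concatenating all $m+1$ primitive elementary collapses gives the desired collapsing of $t \star \Sigma$ onto $\Sigma$. No simplex of $\Sigma$ is ever deleted, which is exactly what ``collapses onto $\Sigma$'' requires.
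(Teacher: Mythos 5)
Your proposal is correct and follows essentially the same route as the paper: the text's terse ``the same procedure collapses $t \star \Sigma$ onto $\Sigma \cup \{\{t\}\} \cup \{\{t,s\}\}$, which can be collapsed onto $\Sigma$'' is exactly your lifted sequence of primitive collapses on the cone part (keeping the base $\Sigma$ fixed), followed by the one remaining collapse of $\{t\}$ into $\{t,s\}$. You have simply spelled out the free-face verification that the paper leaves implicit; the argument is sound, including the essential use of primitivity to ensure that $\{t\}\cup\alpha_i$ is the \emph{only} simplex strictly containing $\{t\}\cup\beta_i$ in $T_{i-1}$.
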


\begin{proof}
There must exist a collapsing procedure which collapses $\Sigma$ to a single $0$-simplex
$\{ s \} \in \Sigma$.  The same procedure collapses $t \star \Sigma$ onto the subcomplex
\begin{eqnarray}
\Sigma \cup \{ t , s \} \cup \{ s \},
\end{eqnarray}
which can be collapsed onto $\Sigma$.
\end{proof}

Let $\bar ( \Pi_n )$ denote the barycentric subdivision of $\Pi_n$ (see
Definition~\ref{barycentricdef}).  The set $\bar ( \Pi_n )$ is the set of all
$\subset$-chains of nonempty subsets of $\{ 0, 1, \ldots, n \}$. The cone
\begin{eqnarray}
[0, 1, 2, \ldots, n ] \star
\bar ( \Lambda_n )
\end{eqnarray}
is a subcomplex of $\bar ( \Pi_n )$.

\begin{lemma}\label{conecollapselemma}
The abstract simplicial complex $\bar ( \Pi_n )$ can be collapsed onto $[0, 1, \ldots, n
] \star \bar ( \Lambda_n )$.
\end{lemma}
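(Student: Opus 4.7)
The plan is to collapse away, in a carefully chosen order, all chains of $\bar(\Pi_n)$ that fail to lie in $[0,1,\ldots,n] \star \bar(\Lambda_n)$. Write $\sigma = \{0,1,\ldots,n\}$ and $\sigma' = \{0,1,\ldots,n-1\}$. Unwinding the definitions, a chain $Q_1 \subset Q_2 \subset \cdots \subset Q_r$ of nonempty subsets of $\{0,\ldots,n\}$ lies in $\sigma \star \bar(\Lambda_n)$ if and only if none of the $Q_i$ equals $\sigma'$. So the simplices I need to eliminate are exactly those chains of $\bar(\Pi_n)$ that contain $\sigma'$ as one of their entries.

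I would pair these ``bad'' chains up as follows. Partition them into the set $A$ of chains containing $\sigma'$ but not $\sigma$ and the set $B$ of chains containing both. Since the only proper superset of $\sigma'$ inside $\{0,\ldots,n\}$ is $\sigma$ itself, every chain of $A$ must have $\sigma'$ as its largest entry, and the assignment $\phi(c) = c \cup \{\sigma\}$ gives a bijection $\phi \colon A \to B$. I then propose to perform the primitive elementary collapses associated with the pairs $(c,\phi(c))$ in order of \emph{decreasing} cardinality $|c|$.

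The main thing to verify is that at the moment we attempt to collapse $(c,\phi(c))$, the face $c$ is a free face of $\phi(c)$ in the current complex. For this I would run a short case analysis on the simplices of $\bar(\Pi_n)$ that properly contain $c$: any such simplex has the form $c \cup S$ where $S$ is a set of additional subsets of $\{0,\ldots,n\}$ that, together with the entries of $c$, form a chain. Because $c$ terminates at $\sigma'$, the only element that can be appended after $\sigma'$ is $\sigma$, so either $S = \{\sigma\}$ (yielding $\phi(c)$), or the extra sets are inserted into the subchain of $c$ strictly below $\sigma'$. In the latter case the resulting simplex lies in $A \cup B$ and has cardinality strictly greater than $|c|$, hence its $\phi$-partner likewise has cardinality greater than $|c|$, so by the chosen ordering the pair has already been processed and the simplex removed. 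Thus at processing time $\phi(c)$ is the unique remaining simplex above $c$, so $c$ is a free face of $\phi(c)$, and the primitive collapse is legal. Once all pairs are processed, the surviving chains are precisely those not containing $\sigma'$, which is $\sigma \star \bar(\Lambda_n)$. The only genuinely delicate point is the case analysis above; the rest is bookkeeping.
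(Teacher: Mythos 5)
Your proposal is correct and is essentially the paper's own proof: the paper sets $\Gamma$ equal to your set $A$ (chains containing $[0,\ldots,n-1]$ but not $[0,\ldots,n]$), orders its elements by decreasing dimension, and deletes the pairs $(Q_i,\, Q_i \cup \{[0,1,\ldots,n]\})$ one at a time, which is exactly your pairing $\phi$ processed in order of decreasing cardinality. The only difference is that the paper asserts this is a sequence of primitive elementary collapses without spelling out the free-face verification, which you supply explicitly.
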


\begin{proof}
Let
\begin{eqnarray}
\Gamma = \left\{ Q \in \bar ( \Pi_n ) \mid
[0, 1, \ldots, n-1] \in S, [0, 1, \ldots, n ] \notin S \right\}.
\end{eqnarray}
Write the elements of $\Gamma$ as a sequence
\begin{eqnarray}
Q_1, Q_2, \ldots, Q_m \in \Gamma
\end{eqnarray}
so that $\dim ( Q_i ) \geq \dim ( Q_{i+1} )$.  Let
\begin{eqnarray}
\bar ( \Pi_n ) = \Sigma_0, \Sigma_1, \Sigma_2, \ldots , \Sigma_m
\end{eqnarray}
be the sequence of subcomplexes of $\bar ( \Pi_n )$ that arises from deleting the pairs
$(Q_i, Q_i \cup \{ [0, 1, \ldots , n ] \} )$ from $\bar ( \Pi_n )$ one at a time.  This
is a sequence of primitive elementary collapses, and its final term is $ [0, 1, \ldots,
n]  \star \bar ( \Lambda_n )$.
\end{proof}

\begin{proposition}\label{baryappendixprop}
Let $\Delta$ be an abstract simplicial complex of dimension $n \geq 0$. Suppose that
$\Delta'$ is a subcomplex of $\Delta$, and suppose that $\Delta$ can be collapsed onto
$\Delta'$.  Then, $\bar ( \Delta )$ can be collapsed onto $\bar ( \Delta' )$.
\end{proposition}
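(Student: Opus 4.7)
The plan is to induct on $n = \dim \Delta$. The base case $n = 0$ is vacuous, since a zero-dimensional complex admits no elementary collapse and so necessarily $\Delta = \Delta'$. For the inductive step it suffices, by decomposing the given collapse into primitive elementary collapses, to handle a single primitive collapse removing a maximal simplex $\alpha$ of dimension $d \leq n$ together with a codimension-one free face $\beta \subset \alpha$. The simplices that must disappear from $\bar ( \Delta )$ are exactly the chains $Q \in \bar ( \Delta )$ with $\alpha \in Q$ or $\beta \in Q$, and I will remove them in two stages.

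In \textbf{Stage A} I imitate Lemma~\ref{conecollapselemma} by pairing each chain $Q \in \bar ( \Delta )$ that contains $\beta$ but not $\alpha$ with the chain $Q \cup \{ \alpha \}$ obtained by adjoining $\alpha$ as a new top element (valid because $\beta$ is the top of $Q$ and $\beta \subsetneq \alpha$). I process these pairs in strictly decreasing order of $\dim Q$; the maximality of $\alpha$ in $\Delta$ together with the free-face property of $\beta$ imply that once all higher-dimensional pairs have been removed, $Q \cup \{ \alpha \}$ is the unique maximal simplex containing $Q$, so each pair is a legitimate primitive elementary collapse when its turn arrives. After Stage A the complex consists of all chains in $\bar ( \Delta )$ that do not contain $\beta$.

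In \textbf{Stage B} the still-unwanted simplices form the set $A' = \{ Q : \alpha \in Q, \beta \notin Q \}$. Writing $\Lambda_\alpha$ for the complex $\partial \alpha \smallsetminus \{ \beta \}$ of proper nonempty subsets of $\alpha$ other than $\beta$, the union $A' \cup \bar ( \Lambda_\alpha )$ is precisely the cone $\{ \alpha \} \star \bar ( \Lambda_\alpha )$ sitting as a subcomplex of the post-Stage-A ambient. I apply Lemma~\ref{conelemma} to collapse this cone onto its base $\bar ( \Lambda_\alpha )$, which deletes exactly $A'$; the cone-over-pairs construction produces pairs of the form $( P \cup \{ \alpha \} , P' \cup \{ \alpha \} )$ whose members both lie in $A'$, and the free-face check localises to the cone because any chain extending a simplex containing $\alpha$ must itself contain $\alpha$. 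Lemma~\ref{conelemma} demands that $\bar ( \Lambda_\alpha )$ be collapsible, and this is where the induction enters: $\Lambda_\alpha$ is itself collapsible via the elementary pairing $S \leftrightarrow S \cup \{ v \}$ for nonempty $S \subsetneq \beta$, where $v$ is the unique element of $\alpha \smallsetminus \beta$, and since $\dim \Lambda_\alpha = d - 1 < n$, the inductive hypothesis (with target a point) gives collapsibility of $\bar ( \Lambda_\alpha )$.

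The main technical obstacle is the book-keeping in Stage A: one must verify that processing pairs in decreasing dimension genuinely makes each $Q$ a free face of $Q \cup \{ \alpha \}$ at the moment of collapse. The argument parallels the proof of Lemma~\ref{conecollapselemma} but must be carried out in the full ambient $\bar ( \Delta )$ rather than in $\bar ( \Pi_k )$. The rescuing observation is that any chain properly extending $Q$ either inserts $\alpha$ on top (yielding either $Q \cup \{ \alpha \}$ itself or something already paired and removed at an earlier higher-dimensional step) or extends $Q$ strictly downward while keeping $\beta$ at the top (hence of strictly larger dimension and already processed).
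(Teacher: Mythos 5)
Your proof is correct and follows essentially the same strategy as the paper's: reduce to a single primitive elementary collapse, pair off chains through $\beta$ with their extension by $\alpha$ in decreasing dimension (this is the content of Lemma~\ref{conecollapselemma}), then collapse the resulting cone over $\bar ( \Lambda_\alpha )$ onto its base via Lemma~\ref{conelemma}, invoking the inductive hypothesis (applied to $\Lambda_\alpha$, of dimension at most $n-1$) to supply collapsibility of $\bar ( \Lambda_\alpha )$. The only real difference is presentational: the paper factors the argument through the model complexes $\Pi_k$ and $\Lambda_k$ and transfers to the ambient $\bar ( \Delta )$ implicitly, whereas you work directly in $\bar ( \Delta )$ and spell out the locality check (any chain through $\alpha$ or $\beta$ lives entirely inside the star of $\alpha$, so the free-face condition is unaffected by the rest of the complex) that the paper's reduction tacitly relies on.
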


\begin{proof}
Again we induct on $n$.  The base case ($n = 0$) is immediate. Suppose that $n \geq 1$
and that the proposition is known to hold for all simplicial complexes of dimension
smaller than $n$.

Every collapsing sequence can be expanded into a sequence of primitive elementary
collapses. Therefore, to prove the proposition, it suffices to show that for any $k \geq
n$, the complex $\bar ( \Pi_k )$ can be collapsed onto $\bar ( \Lambda_k )$.  By
inductive assumption, we know this to be true for $k < n$, and so we need only prove it
for $k = n$.

By Lemma~\ref{conecollapselemma}, the complex $\bar ( \Pi_n )$ can be collapsed onto $[0,
1, \ldots, n ]  \star \bar ( \Lambda_n )$.  The complex $\Lambda_n$ is collapsible, and
therefore $\bar ( \Lambda_n )$ is collapsible by inductive assumption (since $\dim
\Lambda_n = n-1$).  By Lemma~\ref{conelemma}, the cone $ [0, 1, \ldots, n ]  \star \bar (
\Lambda_n )$ can be collapsed to $\bar ( \Lambda_n )$. This completes the proof.
\end{proof}

\begin{acknowledgements}
Many thanks to Yaoyun Shi for introducing me to the topic of evasiveness, and for his
help with the survey material in the introduction. Thanks also to an anonymous reviewer
whose comments helped me improve a previous draft. This work was supported by NSF grants
1017335 and 1216729.
\end{acknowledgements}

\bibliographystyle{ieeesort}
\bibliography{055}

\end{document}